\providecommand{\@LN}[2]{}
\pgfplotsset{compat=1.15}
\newtheorem{thm}{Theorem}[section]
\newtheorem{lem}[thm]{Lemma}
\newtheorem{prop}[thm]{Proposition}
\newtheorem{coro}[thm]{Corollary}
\theoremstyle{remark}
\newtheorem{rema}[thm]{Remark}
\newtheorem{exa}[thm]{Example}
\newtheorem{defi}[thm]{Definition}
\DeclareFontFamily{U} {MnSymbolA}{}
\DeclareFontShape{U}{MnSymbolA}{m}{n}{
  <-6> MnSymbolA5
  <6-7> MnSymbolA6
  <7-8> MnSymbolA7
  <8-9> MnSymbolA8
  <9-10> MnSymbolA9
  <10-12> MnSymbolA10
  <12-> MnSymbolA12}{}
\DeclareFontShape{U}{MnSymbolA}{b}{n}{
  <-6> MnSymbolA-Bold5
  <6-7> MnSymbolA-Bold6
  <7-8> MnSymbolA-Bold7
  <8-9> MnSymbolA-Bold8
  <9-10> MnSymbolA-Bold9
  <10-12> MnSymbolA-Bold10
  <12-> MnSymbolA-Bold12}{}
\DeclareSymbolFont{MnSyA} {U} {MnSymbolA}{m}{n}
\DeclareMathSymbol{\lcirclearrowright}{\mathrel}{MnSyA}{252}
\DeclareMathSymbol{\rcirclearrowleft}{\mathrel}{MnSyA}{250}
\DeclareMathOperator\ad{ad}
\title{Poisson groupoids and moduli spaces of flat bundles over surfaces}
\author{Daniel \'Alvarez}\address{Departament of Mathematics, University of Toronto, Toronto, Ontario}
\email{dalv@math.toronto.edu}
\date{} 
\begin{document}
\linenumbers
\begin{abstract} Let $\Sigma $ be a compact connected and oriented surface with nonempty boundary and let $G$ be a Lie group equipped with a bi-invariant pseudo-Riemannian metric. The moduli space of flat principal $G$-bundles over $\Sigma$ which are trivialized at a finite subset of $\partial\Sigma$ carries a natural quasi-Hamiltonian structure which was introduced by Li-Bland and \v{S}evera. By a suitable restriction of the holonomy over $\partial \Sigma$ and of the gauge action, which is called a decoration of $\partial \Sigma$, it is possible to obtain a number of interesting Poisson structures as subquotients of this family of quasi-Hamiltonian structures. In this work we use this quasi-Hamiltonian structure to construct Poisson and symplectic groupoids in a systematic fashion by means of two observations: \begin{enumerate} \item gluing two copies of the same decorated surface along suitable subspaces of their boundaries determines a groupoid structure on the moduli space associated to the new surface, this procedure can be iterated by gluing four copies of the same surface, thereby inducing a double Poisson groupoid structure; 
\item on the other hand, we can suppose that $G$ is a Lie 2-group, then the groupoid structure on $G$ descends to a groupoid structure on the moduli space of flat $G$-bundles over $\Sigma$. \end{enumerate} 
These two observations can be combined to produce up to three distinct and compatible groupoid structures on the associated moduli spaces. We illustrate these methods by considering symplectic groupoids over Bruhat cells, twisted moduli spaces and Poisson 2-groups besides the classical examples.
\end{abstract}\maketitle
\tableofcontents 

\section{Introduction} 
Symplectic groupoids \cite{weisygr} are a basic tool in the study of Poisson structures, being a fundamental ingredient of what can be called global Poisson geometry \cite{craruipoi,craruimar}. The explicit description of a symplectic groupoid integrating a given Poisson structure sometimes reveals properties of the Poisson structure that would be difficult to discover or prove otherwise. Closely related to symplectic groupoids are the more general Poisson groupoids \cite{weicoi} which contain families of symplectic groupoids in their symplectic foliations \cite{folpoigro}.  

Let $\Sigma$ be a compact, connected and oriented surface with nonempty boundary and let $G$ be a Lie group with a quadratic Lie algebra, meaning that it is equipped with a nondegenerate, symmetric bilinear form which is invariant under the adjoint representation. In this situation, the space of representations $\hom(\pi_1(\Sigma),G)$ admits a canonical quasi-Poisson structure \cite{alemeimal,alemeikos} whose reduction is isomorphic to the classical Poisson structure on the moduli space of flat $G$-bundles over $\Sigma$ which can be identified with $\hom(\pi_1(\Sigma),G)/G$ \cite{atibot,atikno}. Let us take a finite set of points $V\subset \partial \Sigma$ and consider the flat $G$-bundles over $\Sigma$ endowed with a trivialization over $V$. The moduli space of flat $G$-bundles over $\Sigma$ which are trivialized over $V$ can be described by means of the space of groupoid morphisms $\hom(\Pi_1(\Sigma,V),G)$, where $\Pi_1(\Sigma,V)$ is the fundamental groupoid of $\Sigma$ based at $V$. It turns out that $\hom(\Pi_1(\Sigma,V),G)$ still carries a canonical quasi-Poisson structure which allows for greater flexibility in the construction of examples \cite{quisur,quisur2}. In fact, by fixing the holonomies of the arcs inside $\partial \Sigma$ to appropriate submanifolds of $G$ and by restricting the residual gauge action of $G^V$ on $\hom(\Pi_1(\Sigma,V),G)$ in a suitable manner, one can get many smooth examples of Poisson and symplectic structures on the corresponding quasi-Poisson quotients. This is unlike what happens in the classical case which essentially only produces singular quotients. The process of fixing the holonomies along the arcs of $\partial \Sigma$ and restricting the gauge action at the points in $V$ is called {\em coloring} in \cite{quisur,quisur2} or {\em decoration} in \cite{chemazrub}. Instead of the language of quasi-Poisson geometry, here we use mainly the equivalent theory of Hamiltonian spaces for Manin pairs \cite{buriglsev}.

As suggested by \cite[Example 11]{quisur} and by \cite[Example 8]{quisur} for instance, if one takes a collection of arcs $S\subset \partial \Sigma$ between points in $V$ and considers the surface $\Sigma \cup_S \Sigma$ obtained by identifying two copies of $\Sigma $ along $S$, we have that $\hom(\Pi_1(\Sigma \cup_S \Sigma,V\cup_S V),G)$ carries a groupoid structure over $\hom(\Pi_1(\Sigma , V),G)$ which can be reduced to a Poisson groupoid structure by decorating $\partial (\Sigma \cup_S \Sigma)$ in a symmetric way with respect to the identification. In this work we formulate this observation in general in Theorem \ref{thm:poigro}. As a new application of this result, we obtain an alternative integration for the standard Poisson structures on the Bruhat cells inside complex semi-simple Lie groups \cite{lumou2}, see Example \ref{exa:brucel}. In fact, what we obtain is a family of symplectic groupoids over generalized Bruhat cells which is related to a recent construction of J.-H. Lu, V. Mouquin and S. Yu \cite{confla}. Later on, we show how an even more symmetric decoration of $\partial \Sigma$ induces two compatible groupoid structures on the associated moduli space, see Theorem \ref{thm:doupoigro}. This kind of decoration corresponds to gluing four copies of the same surface along a common set of edges and decorating the boundary of the resulting surface symmetrically with respect to the two identifications. This observation provides an explanation for the appearance of double symplectic groupoids integrating Poisson groups \cite{luwei2} (Example \ref{exa:dousym2}) and it also allows the systematic construction of many more non trivial examples of double Poisson and symplectic groupoids. A deeper explanation for the appearance of these Poisson groupoid structures comes from the existence of a classical topological field theory in dimension three which was introduced in \cite{sevmorqua}. From this perspective, the groupoid multiplication can be described in terms of a certain cobordism given by a 3-manifold with corners, see  \cite{sevmorqua} for an example of this. We shall describe this approach in later work. 

Finally, by categorifying the structure group $G$ and considering instead a Lie 2-group $\mathcal{G}$ equipped with a split quadratic Lie 2-algebra, we get a groupoid structure on the representation space $\hom(\pi_1(\Sigma),\mathcal{G})$. Examples of these categorified structure groups come from the twisted moduli spaces \cite{twiwilcha,twimodspa} in which $\mathcal{G} $ is the automorphism 2-group of a semi-simple Lie group. If one glues two copies of the same surface and decorates the boundary in a way compatible with the groupoid structure on $\mathcal{G} $, what we obtain is a double Poisson groupoid structure, see Theorem \ref{thm:doupoigro2}. By combining the conditions of Theorems \ref{thm:doupoigro} and \ref{thm:doupoigro2}, what we get is a triple Poisson groupoid structure on the corresponding moduli spaces, see Theorem \ref{thm:tripoigro}. These last two results are illustrated by considering Poisson 2-groups \cite{poi2gro} and some symplectic groupoids which integrate them. Let us remark that the double Lie groupoids which appear in our constructions in \S \ref{sec:doupoigro} before performing some form of reduction are {\em double quasi-Poisson groupoids}, a concept which generalizes quasi-Poisson Lie 2-groups \cite{poi2gro} and double Poisson groupoids \cite{macdousym}.

\section{Preliminaries}

\subsection{Poisson structures and Poisson groupoids} We follow mainly the conventions of \cite{moeint,dufzun}. A {\em Poisson structure} on a manifold $M$ is a Lie bracket 
\[ \{\,,\,\}: C^\infty(M) \times  C^\infty(M) \rightarrow  C^\infty(M) \] such that the Leibniz rule holds: $\{f,gh\}=g\{f,h\}+\{f,g\}h$ for all $f,g,h\in C^\infty(M)$. A Poisson structure $\{\,,\,\}$ turns $(M,\{\,,\,\})$ into a \emph{Poisson manifold}. A Poisson morphism $J:(P,\{\,,\,\}_P) \rightarrow (Q,\{\,,\,\}_Q)$ between Poisson manifolds is a smooth map that satisfies $J^*\{f,g\}_Q=\{J^*f,J^*g\}_P$ for all $f,g\in C^\infty (Q)$. The bracket $\{\,,\,\}$ can be equivalently described by a bivector field $\pi$ on $M$ defined by $\{f,g\}=\pi(df,dg)$ for all $f,g \in C^\infty(M)$. Let $(M,\pi)$ be a Poisson manifold. A submanifold $C$ of $M$ is {\em coisotropic} if $\pi^\sharp(T^\circ C)\subset TC$, where $T^\circ C$ is the annihilator of $TC$. 

A {\em Lie groupoid} is a groupoid object in the  category of smooth manifolds, denoted $\mathcal{G}  \rightrightarrows M$, such that its source map is a submersion. The structure maps of a groupoid are its source, target, multiplication, unit map and inversion, denoted respectively $\mathtt{s},\mathtt{t},\mathtt{m},\mathtt{u} , \mathtt{i}$; when several Lie groupoids are involved, we put a subindex under the structure maps to specify the groupoid to which they correspond. In this paper we only work with Hausdorff Lie groupoids. Some of the most basic examples of Lie groupoids are: (1) Lie groups and (2) the pair groupoid over a manifold $M$, which is given by the product $M \times M \rightrightarrows M$ and whose multiplication is defined by $\mathtt{m}((a,b),(b,c))=(a,c)$, its source and target being the projections to $M$. A {\em Lie algebroid} consists of a vector bundle $A$ over a manifold $M$ which is equipped with: (1) a bundle map $\mathtt{a}:A\rightarrow TM$ called the {\em anchor} and (2) a Lie algebra structure $[\,,\,]$ on $\Gamma(A)$ such that the Leibniz rule holds
\[ [u,fv]=f[u,v]+\left(\mathcal{L}_{\mathtt{a}(u)}f\right)v, \] 
for all $u,v\in \Gamma(A)$ and $f\in C^\infty(M)$. See \cite{higmacalg,vai} for the definition of a Lie algebroid morphism. The {\em Lie algebroid} $A=A_{\mathcal{G} }$ of a Lie groupoid $\mathcal{G} \rightrightarrows M$ is the vector bundle $A=\ker T\mathtt{s}|_{M}$ endowed with the restriction of $T\mathtt{t}$ to $A$ as the anchor and with the bracket defined by means of right invariant vector fields \cite{moeint,macgen}. We denote the left invariant (respectively, right invariant) extension of $u \in \Gamma (A)$ by $u^l$ (respectively, $u^r$). Note that this convention implies, in particular, that the Lie bracket on the Lie algebra $\mathfrak{g} $ of a Lie group $G$ is defined using right rather than left invariant vector fields as is traditional in differential geometry.

A {\em Poisson groupoid} \cite{weicoi} is a Lie groupoid ${\mathcal{G}  }  \rightrightarrows M$ with a Poisson structure on ${\mathcal{G} }$ such that the graph of the multiplication map is a coisotropic submanifold of $\mathcal{G} \times \mathcal{G} \times \overline{\mathcal{G} }$, where $\overline{\mathcal{G} }$ denotes $\mathcal{G} $ with the opposite Poisson structure. The two extreme examples of Poisson groupoids are: 
\begin{enumerate} \item Poisson groupoids over a point which are called {\em Poisson (or Poisson-Lie) groups} \cite{driham} and \item Poisson groupoids whose bracket is nondegenerate, in which case they are known as {\em symplectic groupoids} \cite{weisygr,karpoi}. \end{enumerate}  
Equivalently, a Lie groupoid $\mathcal{G}  \rightrightarrows M$ is a symplectic groupoid if it is endowed with a symplectic form $\omega $ such that the graph of the multiplication in $\mathcal{G} \times \mathcal{G} \times {\mathcal{G} }$ is Lagrangian with respect to the symplectic form $\text{pr}_1^* \omega +\text{pr}_2^* \omega -\text{pr}_3^* \omega $. The base of a symplectic groupoid inherits a unique Poisson structure such that the target map is a Poisson morphism \cite{weisygr,dufzun}; in this situation, such a Poisson structure is called {\em integrable}.
\subsection{Poisson groupoids and Hamiltonian spaces for Manin pairs}
The finite-dimensional construction of Poisson structures on moduli spaces of flat bundles over surfaces relies on a remarkable analogue of Poisson reduction \cite{alemeimal,alemeikos} that can be subsumed together with classical Marsden-Weinstein reduction into the framework of Courant algebroids and their morphisms \cite{buriglsev}. This language provides us with a very efficient and general way of describing Poisson reduction that can be adapted to the setting of Poisson groupoids in a straightforward manner (see Proposition \ref{pro:mulmp}). However, it has the drawback of being somewhat abstract. For this reason, we include in Example \ref{ex:quaham} below a brief comparison with the classical formulation of moment maps. Also, we review in \S \ref{subsec:quapoi} the equivalent viewpoint of quasi-Poisson geometry. 

\subsubsection{Manin pairs and moment maps} A {\em Courant algebroid} \cite{liuweixu} consists of a vector bundle $E\rightarrow M$, a non-degenerate symmetric bilinear form on the fibers $\langle\,,\, \rangle\in \Gamma (E^* \otimes E^* )$ that we call the {\em metric}, a bilinear bracket $\llbracket\,,\, \rrbracket:\Gamma(E)\times \Gamma(E)\rightarrow \Gamma(E)$ called the {\em Courant bracket} and a vector bundle morphism $\mathtt{a}:E\rightarrow TM$ called the {\em anchor} such that the following identities hold:
\begin{align*} &\mathtt{a}(u)\langle v,w \rangle =\langle \llbracket u,v\rrbracket,w \rangle + \langle v, \llbracket u,w\rrbracket \rangle, \\
& \llbracket u,\llbracket v,w\rrbracket\rrbracket=\llbracket\llbracket u,v],w\rrbracket+\llbracket v,\llbracket u,w\rrbracket\rrbracket,  \\
& \frac{1}{2}\mathtt{a}^*d \langle u,u \rangle = \llbracket u,u\rrbracket^\flat; \end{align*}
	for all $u,v,w \in \Gamma(E)$, where ${}^\flat:E\rightarrow E^*$ is the isomorphism given by the metric. It is a consequence of the axioms for a Courant algebroid $(E,\langle\,,\, \rangle, \llbracket \,,\, \rrbracket, \mathtt{a} )$ that the Leibniz rule holds and $\mathtt{a}$ preserves brackets \cite{uchcou}:
	\begin{align*} & \llbracket u,fv\rrbracket=f \llbracket u,v\rrbracket+\left(\mathcal{L}_{\mathtt{a}(u)}(f)\right)v, 
& \mathtt{a}\left(\llbracket u,v\rrbracket\right)=[\mathtt{a}(u),\mathtt{a}(v)]; \end{align*} 
for all $u,v \in \Gamma(E)$ and all $f\in C^\infty(M)$. The two most basic families of examples of Courant algebroids are: (1) Lie algebras endowed with an Ad-invariant nondegenerate symmetric bilinear form which are also known as {\em quadratic Lie algebras}, and (2) {\em exact Courant algebroids} which consist of the following data. Let $M$ be a manifold and let $H$ be a closed 3-form on $M$. The {\em ($H$-twisted) Courant-Dorfman bracket} \cite{coubey,coudir} and the canonical pairing on $\Gamma(TM\oplus T^*M)$ are defined, respectively, by:
\begin{align*}  & \llbracket X \oplus \alpha,Y \oplus \beta \rrbracket:=[X,Y]\oplus L_X\beta-i_Yd\alpha-i_Xi_Y H, \\
&\langle X\oplus \alpha,Y\oplus \beta\rangle=\langle\alpha,Y\rangle+\langle \beta,X \rangle; \end{align*} 
for all $X,Y\in \Gamma(TM)$, $\alpha,\beta\in \Gamma(T^*M)$. We have that $\mathbb{T}_HM=TM\oplus T^*M$ endowed with the Courant-Dorfman bracket and its canonical pairing is a Courant algebroid; if $H=0$, we get the \emph{standard Courant algebroid}. A Courant algebroid over $M$ is called {\em exact} if it is isomorphic to $\mathbb{T}_HM$ for some $H$. 

A subbundle $L\subset E$ of a vector bundle $E$ endowed with a metric $\langle\,,\, \rangle\in \Gamma (E^* \otimes E^* )$ is called {\em isotropic} if $\langle\,,\, \rangle|_L=0$; a subbundle $L\subset E$ is called {\em Lagrangian} if it satisfies $L=L^\perp$. For example, an isomorphism of Courant algebroids $E\cong\mathbb{T}_HM$ is determined (if it exists) by a section of the anchor map $\mathtt{a}:E \rightarrow TM$ with Lagrangian image, such a section is called a {\em splitting} of $E$. A {\em Dirac structure} in a Courant algebroid $E$ over $M$ is a Lagrangian subbundle $L\subset E$ which is involutive with respect to the restricted Courant bracket. Let $N \hookrightarrow M$ be a submanifold, a {\em Dirac structure with support on $N$} is a Lagrangian subbundle $L \hookrightarrow E|_N$ such that: 
\begin{itemize} \item $\llbracket u,v\rrbracket|_N\in \Gamma (L)$, whenever $u ,v\in \Gamma (E)$ satisfy $u|_N,v|_N\in \Gamma (L)$ and \item $\mathtt{a}(u)\in \Gamma (TN)$ if $u\in \Gamma (L) $. \end{itemize}
If $L$ is a Dirac structure inside the Courant algebroid $E$, then $(E,L)$ is called a \emph{Manin pair}. A Manin pair $(E,L)$ is called {\em exact} if $E$ is an exact Courant algebroid. For simplicity, a Dirac structure in $\mathbb{T} M$ shall be called a Dirac structure on $M$. For example:
\begin{itemize} \item $\text{Graph}(\omega )=\{X\oplus i_X \omega \in \mathbb{T} M |X\in TM\}$ for $\omega \in \Omega^2(M)$ closed and
\item $\text{Graph}(\pi )=\{i_\alpha \pi \oplus \alpha  \in \mathbb{T} M |\alpha \in T^*M\}$ for $\pi $ a Poisson bivector field on $M$ \end{itemize} 
are Dirac structures on $M$.  
 
Dirac structures are a useful tool in Poisson reduction for the following reason. Let $L$ be a Dirac structure on $M$; $f \in C^\infty (M)$ is called {\em admissible} if there exists a vector field $X_f$ such that $X_f\oplus df\in \Gamma(L)$. Let $f,g$ be admissible functions for $L$, the bracket $\{f,g\}=L_{X_f}g$ makes the space of admissible functions into a Lie algebra; if the distribution $\mathcal{N} =L \cap(TM\oplus 0)$ is of constant rank, its induced foliation is called {\em the null foliation of $L$}. We shall also say that $\mathcal{N}$ is the {\em kernel of $L$} and we denote it by $\ker L$. A function is admissible if and only if it is constant along the leaves of $\mathcal{N} $, so the space of admissible functions can be identified with the space of smooth functions on the leaf space of $\mathcal{N} $. A Dirac structure is called {\em reducible} if $\mathcal{N} $ is a simple foliation. In this situation, the bracket on admissible functions induces a Poisson structure on the leaf space of $\mathcal{N} $. 

Let us emphasize that, being a Lie algebroid, a Dirac structure $L$ on an arbitrary Courant algebroid $E$ over $M$ induces an involutive distribution $\mathtt{a}(L)\subset TM$. We will say that the leaves of this foliation are {\em $L$-orbits}. On the other hand, the null foliation discussed above only makes sense if $E=\mathbb{T}M$ (and if $\ker L$ is of constant rank). If $(E,L)$ is a Manin pair on $M$, we say that a submanifold $S \hookrightarrow M$ is {\em $L$-invariant} if it is a union of $L$-orbits. 

Let $E,F$ be Courant algebroids over $M,N$ respectively. We denote by $\overline{F} $ the Courant algebroid $F$ with the opposite inner product. Given a smooth map $f:M \rightarrow N$, let us denote by $\text{Graph}(f) \subset M \times N $ the graph of $f$. A {\em Courant morphism} \cite{alexu} between $E$ and $F$ over $f$ is a Dirac structure $R\subset {E} \times \overline{F}  $ with support on $ \text{Graph}(f)$; we will write $u\sim_R v$ if and only if $(u,v)\in R$ in what follows. More generally, we can replace $\text{Graph}(f)$ with any smooth relation $S$ between $M$ and $N$ and obtain a more general notion of Courant morphism. A Courant morphism $R$ over a relation $S \subset M \times N$ as before is {\em exact} if the sequence
\[ \xymatrix{0 \ar[rr]  && T^\circ S \ar[rr]^-{(\mathtt{a}_E \times \mathtt{a}_F) ^*} && R \ar[rr]^-{\mathtt{a}_E \times \mathtt{a}_F}&& T S \ar[rr] &&0} \]
is exact. For instance, if $f:M \rightarrow N$ is a smooth map, then we get a {\em canonical exact Courant morphism over $f$ (or $\text{Graph}(f)$)} 
\begin{align}  R_f:\mathbb{T}M \rightarrow \mathbb{T}N,\quad R_f=\{(X\oplus f^* \alpha ,Tf(X)\oplus \alpha)| X\oplus \alpha \in TM\oplus f^*(T^*N) \}. \label{eq:canmpm} \end{align} 
We will use the notation $S:M\dashrightarrow N$ for relations between manifolds as above; also, the transpose of $S$ is the relation $S^\top: N \dashrightarrow M $ defined by $S^\top=\{(n,m)|(m,n)\in S\}$. Relations between manifolds can be composed under clean intersection assumptions, see \cite[Appendix A]{liedir}. The same considerations apply to Courant algebroids. The composition of two Courant morphisms  
\[ R:E \rightarrow E' \quad \text{and }\quad R':E' \rightarrow F, \] 
defined respectively over two smooth relations $S:M \dashrightarrow N$ and $S':N \dashrightarrow P$, is given by the pointwise composition of set-theoretic relations $R'\circ R$ which is again a Courant morphism over the composition of relations $S'\circ S$ as long as 
\begin{itemize} \item $R\times R'$ intersects cleanly with $E \times E'_\Delta \times F= \{(u,v,v,w)\in E \times E' \times E' \times F|u\in E,\,v\in E',\, w\in F \}$ and 
\item the projection $(R \times R') \cap (E \times E'_\Delta \times F )\rightarrow E \times F$ has constant rank. \end{itemize} 
In this situation, it is said that $R'$ and $R$ are {\em cleanly composable} \cite[Proposition 1.4]{liedir}. In the example of exact Courant algebroids described above, if $L$ is a Dirac structure on $N$ and $f:M \rightarrow N$ is a smooth map, $L\circ R_f$ is denoted as $\mathfrak{B}_f (L)$ and is also a Dirac structure as long as $L$ and $R_f$ compose cleanly. A {\em morphism between Manin pairs} \cite{buriglsev} $(E,L)$ and $(F,K)$ over a smooth map $f: M \rightarrow N$ is a Courant morphism $R$ between $E$ and $F$ such that the composition satisfies $R\circ L=K|_{f(M)} $ and $\ker R\cap L=0$. A morphism of Manin pairs $R:(E,L) \rightarrow (F,K)$ has the following key property: it induces a Lie algebra morphism 
\[ \rho_R:\Gamma (K) \rightarrow \mathfrak{X}(M)\] given by $\rho_R(u)=\mathtt{a}_E(v)$ for all $u\in \Gamma (K)$, where $v\in \Gamma (L)$ is the only section that satisfies $(v_p,  u_{f(p)})\in R$ for all $p\in M$. 

\begin{defi}\label{def:quaham} A manifold $M$ is a {\em quasi-Hamiltonian space for a Manin pair $(E,L)$} if there exists an exact morphism of Manin pairs $R:(\mathbb{T}M,TM) \rightarrow (E,L)$ (over a map) such that $E$ is exact. \end{defi}  

\begin{rema} Quasi-Hamiltonian spaces are called {\em exact quasi-Hamiltonian} or {\em quasi-symplectic} in \cite{quisur2}. We prefer to use this terminology for simplicity and also to distinguish them among the more general quasi-Poisson manifolds we will describe in \S \ref{subsec:quapoi}. \end{rema}

Take a quasi-Hamiltonian space $M$ for a Manin pair $(E,L)$ with associated Courant morphism $R$. Then the null foliation of the pullback Dirac structure $L\circ R$ is the tangent distribution to the $L$-orbits on $M$. So the leaf space of the $L$-action on $M$ becomes a Poisson manifold if $L\circ R$ is reducible. This is an abstraction of what happens in the following well known situations.  
\begin{exa}\label{ex:quaham} The two main families of examples of quasi-Hamiltonian spaces come from the theory of moment maps.
\begin{enumerate} \item Suppose that $(E,L)=(\mathbb{T} \mathfrak{g}^*, \text{Graph}(\pi_{LP})  ) $, where $\text{Graph}(\pi_{LP})$ is the graph of the canonical Lie-Poisson (or Kirillov-Kostant-Souriau) linear Poisson structure on $\mathfrak{g}^* $, the dual of a Lie algebra $\mathfrak{g} $. Then a Hamiltonian $\mathfrak{g} $-space $(M,\Omega  )$ (in the classical sense of symplectic geometry) with moment map $\mu:M \rightarrow \mathfrak{g}^*$ determines a quasi-Hamiltonian space for the Manin pair $(\mathbb{T} \mathfrak{g}^*, \text{Graph}(\pi_{LP})  )$ with the Manin pair morphism
\[ R=\{(X\oplus \mu^* \alpha -i_X \Omega  ,T\mu(X)\oplus \alpha )\in \mathbb{T}M \times \mathbb{T} \mathfrak{g}^*  |X\oplus \alpha \in TM \oplus \mu^*T^*\mathfrak{g}^* \}. \] 
\item Let $G$ be a Lie group such that its Lie algebra $\mathfrak{g} $ is equipped with an Ad-invariant nondegenerate symmetric pairing $\langle \cdot,\cdot \rangle $. Let $\theta^r,\theta^l$ be, respectively, the right and left invariant Maurer-Cartan forms on $G$. Then we have the {\em Cartan-Dirac structure $L_G$} on $G$ 
\[ L_G= \left\{u^r-u^l \oplus \frac{1}{2}\langle \theta^r +\theta^l,u \rangle \in \mathbb{T}G|u\in \mathfrak{g}   \right\}\]
which is involutive with respect to the Courant-Dorfman bracket twisted by the Cartan 3-form
\[ H= \frac{1}{12}\langle \theta^l,[\theta^l,\theta^l] \rangle \in \Omega^3(G). \]
A manifold $M$ equipped with a map $\mu:M \rightarrow G$ and a 2-form $\Omega  \in \Omega^2(M)$ is a quasi-Hamiltonian $\mathfrak{g} $-manifold in the sense of \cite[Def. 5.1]{purspi} (which is the infinitesimal version of the quasi-Hamiltonian $G$-spaces of \cite{alemeimal}) if and only if 
\[ R=\{(X\oplus \mu^* \alpha -i_X \Omega  ,T\mu(X)\oplus \alpha )\in \mathbb{T}M \times \mathbb{T}_H G  |X\oplus \alpha \in TM \oplus \mu^* T^*G  \} \]
is a morphism of Manin pairs $R: (\mathbb{T}M ,TM) \rightarrow (\mathbb{T}_HG ,L_G )$, see \cite[Thm 5.2]{purspi} and \cite[Ex. 2.11]{buriglsev}.    
\end{enumerate} 
In fact, the Manin pair morphism associated to a general quasi-Hamiltonian space can be represented in terms of a 2-form as in the two examples above as long as a splitting of the target Courant algebroid is chosen, see \cite{purspi,buriglsev}. \end{exa}  
\subsubsection{Multiplicative Manin pairs and Poisson groupoids} Now we shall promote classical Poisson reduction and moment maps to the level of Lie groupoids. A VB-groupoid is a groupoid in the category of vector bundles. A {\em CA-groupoid} \cite{muldir,liedir} is a VB-groupoid $\mathcal{E}   \rightrightarrows \mathcal{E}_0 $ over some Lie groupoid $\mathcal{G}  \rightrightarrows M$ such that $\mathcal{E}  $ possesses a Courant algebroid structure $(\mathcal{E} , \langle \,,\, \rangle , \llbracket \,,\, \rrbracket )$ which is multiplicative: the graph of the multiplication of $\mathcal{E}  \rightrightarrows  \mathcal{E}_0$ is a Dirac structure inside $\mathcal{E}   \times \mathcal{E}   \times \overline{\mathcal{E}  } $ whose support is the graph of the multiplication of $\mathcal{G} $. A {\em multiplicative Manin pair} $(\mathcal{E},\mathcal{L}   )$ is a CA-groupoid $\mathcal{E}   \rightrightarrows \mathcal{E}_0$ and a Dirac structure $\mathcal{L}  \subset \mathcal{E}  $ which is also a VB-subgroupoid $\mathcal{L} \rightrightarrows \mathcal{L}_0 $ of $\mathcal{E}  \rightrightarrows \mathcal{E}_0 $; in this situation, $\mathcal{L} $ is called a {\em multiplicative Dirac structure} \cite{muldir}. A {\em morphism of multiplicative Manin pairs} is a morphism of Manin pairs whose support is the graph of a Lie groupoid morphism and which is also, as a relation, a VB-subgroupoid inside the corresponding product of the CA-groupoids. 
\begin{exa} Take a Manin pair $(E,L)$ over $M$. Then the pair groupoid $\left(\mathcal{E} \rightrightarrows \mathcal{E}_0\right) =\left( E \times \overline{E} \rightrightarrows E\right)$ is a CA-groupoid over the pair groupoid on $M$ and the pair groupoid $(\mathcal{L} \rightrightarrows \mathcal{L}_0)= ( L \times L \rightrightarrows L)$ is a multiplicative Dirac structure inside $\mathcal{E} \rightrightarrows \mathcal{E}_0$ determining then a multiplicative Manin pair $(\mathcal{E},\mathcal{L})$.  \end{exa} 

\begin{defi}\label{def:mulquaham} A {\em multiplicative quasi-Hamiltonian space} for a multiplicative Manin pair $(\mathcal{E},\mathcal{L}   )$ is a Lie groupoid $\mathcal{G}\rightrightarrows M $ equipped with a morphism of multiplicative Manin pairs $R:(\mathbb{T} \mathcal{G},T \mathcal{G}) \rightarrow (\mathcal{E},\mathcal{L}  )$ which makes it into a quasi-Hamiltonian space for $(\mathcal{E},\mathcal{L}   )$.  \end{defi} 

\begin{defi}\label{def:mulfol} Let $\mathcal{G} \rightrightarrows M$ be a Lie groupoid. A VB-subgroupoid $(\mathcal{D} \rightrightarrows \mathcal{D}_0 ) \hookrightarrow (T \mathcal{G} \rightrightarrows TM)$ which is an involutive distribution induces a {\em multiplicative foliation} \cite{muldir}. We shall say that a multiplicative foliation is {\em multiplicatively simple} if it is simple and if its leaf space inherits a Lie groupoid structure such that the quotient map is a groupoid morphism. \end{defi}  
\begin{prop}\label{pro:mulmp} Let $(\mathcal{E},\mathcal{L}  )$ be an exact multiplicative Manin pair and let $\mathcal{A}\subset \mathcal{E} $ be a multiplicative Dirac structure such that $\mathcal{L}\cap \mathcal{A}$ is of constant rank. Let $\mathcal{G}\rightrightarrows M $ be a multiplicative quasi-Hamiltonian space for $(\mathcal{E},\mathcal{L}   )$ with Manin pair morphism $R:(\mathbb{T} \mathcal{G},T \mathcal{G}) \rightarrow (\mathcal{E},\mathcal{L}  )$ over $\Phi:\mathcal{G} \rightarrow \mathcal{P} $. Then the following statements hold. 
\begin{enumerate} \item Suppose that $\mathcal{L} $ and $R$ are cleanly composable and suppose that the null foliation of $\mathcal{L}\circ R$ is multiplicatively simple with leaf space ${\mathcal{G} }/\mathcal{L} $. Then the induced Poisson structure on ${\mathcal{G} }/\mathcal{L} $ makes it into a Poisson groupoid. 
\item Suppose that $\mathcal{O}\subset \mathcal{P}$ is a Lie subgroupoid which is a union of $\mathcal{A}  $-orbits and suppose that $\Phi$ has clean intersection with $\mathcal{O} $. Let $i:\Phi^{-1}(\mathcal{O} ) \hookrightarrow \mathcal{G} $ be the inclusion. Suppose that $\mathcal{A} $, $R$ and the canonical Courant relation $R_i$ over $i$ are cleanly composable. Then we have that:
\begin{itemize} \item the null foliation of $\mathfrak{B}_i(\mathcal{A} \circ R)$ is given by the image of $(\mathcal{L} \cap \mathcal{A})|_{\mathcal{O} }$ under the action map $\Phi^*\mathcal{L} \rightarrow T \mathcal{G}$ defined by $R$; 
\item if the null foliation of $\mathfrak{B}_i(\mathcal{A} \circ R)$ is multiplicatively simple with leaf space $\Phi^{-1}(\mathcal{O} )/(\mathcal{L} \cap \mathcal{A} )$, then the Poisson structure induced by $\mathfrak{B}_i( \mathcal{A} \circ R)$ on $\Phi^{-1}(\mathcal{O} )/(\mathcal{L} \cap \mathcal{A} ) $ makes it into a Poisson groupoid. \end{itemize}  
\item In the situation of the previous item, if $T\mathcal{O}$ is spanned by the $ \mathcal{A} $-action on $\mathcal{P} $, then $\Phi^{-1}(\mathcal{O} )/(\mathcal{L} \cap \mathcal{A} )$ becomes a symplectic groupoid. \end{enumerate}  \end{prop}\begin{proof} Item (1) follows from item (2) applied to $\mathcal{A}=\mathcal{L} $ and $\mathcal{O}=\mathcal{P} $. On the other hand, items (2) and (3) follow from adapting \cite[Thm. 1.1]{quisur2} to this situation. Let $\mathcal{O}\hookrightarrow \mathcal{P}$ be a Lie subgroupoid and let $\mathcal{A} \hookrightarrow \mathcal{E} $ be a multiplicative Dirac structure as in item (2). Since $\mathcal{E}$ is an exact Courant algebroid, it is isomorphic to $\mathbb{T}_H \mathcal{P}  $ for some $H\in \Omega^3( \mathcal{P} )$ closed. Since $R$ a Dirac structure that covers $T \text{Graph}(\Phi)$ surjectively, it is of the form
\begin{align*}  R=\left\{ X\oplus \Phi^* \alpha -i_X \Omega  ,T \Phi(X)\oplus \alpha )\in \mathbb{T} \mathcal{G} \times \mathbb{T} \mathcal{P}   |X\oplus \alpha \in T \mathcal{G} \oplus \Phi^*T^*\mathcal{P}  \right\} \label{eq:mulmp1} \end{align*} 
for some $\Omega \in \Omega^2(\mathcal{G} )$ such that $d \Omega  =-\Phi^*H$ as in Example \ref{ex:quaham}. Suppose that $\mathcal{S}\hookrightarrow \mathcal{O}  $ is an $\mathcal{A}$-orbit. So $\mathcal{A}|_{\mathcal{S} }=\{V\oplus \xi|i^*\xi=i_V \omega \}$ for some $\omega \in \Omega^2(\mathcal{S} )$. As a consequence, 
\[ \mathfrak{B}_j( \mathcal{A} \circ R)=\text{Graph}((\Phi\circ j)^* \omega - j^*\Omega  )  \] 
where $j: \Phi^{-1}( \mathcal{S}) \hookrightarrow \mathcal{G} $ is the inclusion. Now take $V\oplus \xi\in ( \mathcal{L}\cap\mathcal{A} )|_{\Phi(x)} $ for $x\in \Phi^{-1}(\mathcal{S} )$. Since $R$ is Manin pair morphism, there is a unique $X\in T_x \mathcal{G} $ such that $X\oplus 0$ is related to $V\oplus \xi$ via $R$, this means that $T\Phi(X)=V$ and $\Phi^* \xi-i_X \Omega =0 $. This last equation implies that $X\in \ker((\Phi\circ j)^* \omega - j^*\Omega )$ because $V\oplus \xi\in \mathcal{A} $ as well and so $\langle \xi, T\Phi(Y) \rangle =(\Phi^*\omega) (X,Y)$ for all $Y\in T_x \Phi^{-1}(\mathcal{S} )$. Conversely, suppose that $Z\in \ker((\Phi\circ j)^* \omega - j^*\Omega )$. Since $\Phi$ intersects $\mathcal{S} $ cleanly as well, the sequence
\[ \begin{tikzcd}
T_x\Phi^{-1}(\mathcal{S}) \arrow[rrr, "{Y\mapsto (T\Phi(Y),Y)}"] &  &  & T_{\Phi(x)}\mathcal{S}\times T_x\mathcal{G} \arrow[rrr, "{(V,Y)\mapsto V-T\Phi(Y)}"] &  &  & T_{\Phi(x)}\mathcal{P}
\end{tikzcd} \]
is exact and so is its dual sequence. As a consequence, there exists $\eta\in T^*_{\Phi(x)}\mathcal{P}$ such that $(\eta|,-\Phi^*\eta)=(i_{T\Phi(Z)} \omega ,-i_Z \Omega  )\in T_{\Phi(x)}^*\mathcal{S} \times T^*_x \mathcal{G}  $. But this means that $T\Phi(Z)\oplus \eta \in \mathcal{A}$ and also that $(Z\oplus 0,T\Phi(Z)\oplus \eta)\in R$ from which it follows that $T\Phi(Z)\oplus \eta \in \mathcal{L}$. We have verified then that $Z$ lies in the image of $(\mathcal{L}\cap \mathcal{A})|_{\mathcal{S} }$ in $T\left(\Phi^{-1}(\mathcal{S} )\right)$ under the action map. Since $\mathcal{O}$ is a union of $\mathcal{A}$-orbits, the previous argument applied to each of these orbits shows that the null foliation of $\mathfrak{B}_i( \mathcal{A} \circ R) $ is the tangent distribution to the $ (\mathcal{L}\cap \mathcal{A})$-orbits on $\Phi^{-1}(\mathcal{O})$. Since $R$ is multiplicative Courant morphism, $\mathfrak{B}_i( \mathcal{A} \circ R) $ is a multiplicative Dirac structure and so, under our assumptions, the leaf space of its null foliation inherits a Poisson groupoid structure according to \cite[Thm. 4.4]{leaspa}. Moreover, we also got as a result of our construction that the symplectic leaves of $\Phi^{-1}(\mathcal{O} )/(\mathcal{L} \cap \mathcal{A} ) $ are precisely the connected components of the subquotients
\[ \Phi^{-1}(\mathcal{S})/(\mathcal{L} \cap \mathcal{A} ) \hookrightarrow \Phi^{-1}(\mathcal{O})/(\mathcal{L} \cap \mathcal{A} ) \]  
for all the $\mathcal{A}$-orbits $\mathcal{S} \hookrightarrow \mathcal{P} $. It follows that, if $T \mathcal{O}$ is spanned by the tangent spaces to the $\mathcal{A}$-orbits, then at each point $x\in \Phi^{-1}(\mathcal{O} )$ we have that $ T_{[x]} \left(\Phi^{-1}(\mathcal{O} )/(\mathcal{L} \cap \mathcal{A} )\right)  =T_{[x]}\left(\Phi^{-1}(\mathcal{S})/(\mathcal{L} \cap \mathcal{A} ) \right) $, where $\mathcal{S}\hookrightarrow \mathcal{O}  $ is the $\mathcal{A}$-orbit that contains $\Phi(x)$. Therefore, $\Phi^{-1}(\mathcal{O} )/(\mathcal{L} \cap \mathcal{A} ) $ is symplectic and hence item (3) holds as well. \end{proof} 
    
\begin{rema} Note that in item (2) of Proposition \ref{pro:mulmp} the symplectic leaves of $\Phi^{-1}(\mathcal{O} )/(\mathcal{L} \cap \mathcal{A} ) $ are given by the connected components of $\Phi^{-1}(\mathcal{S})/(\mathcal{L} \cap \mathcal{A} )$ for all the $\mathcal{A}$-orbits $\mathcal{S} \hookrightarrow \mathcal{P} $. Based on the proof of Proposition \ref{pro:mulmp}, we can find explicit formulas for the symplectic structures on these leaves by choosing a splitting of $\mathcal{E} $. However, let us point out that the auxiliary differential forms $\Omega \in \Omega^2(\mathcal{G} )$, $H\in \Omega^3(\mathcal{P} )$ we introduced in the proof of Proposition \ref{pro:mulmp} are not necessarily multiplicative forms in the sense of \cite{burcab}, see Example \ref{exa:dousym}. The properties of these differential forms can be explained within the framework of shifted symplectic geometry but for reasons of space we shall describe this viewpoint in a separate upcoming work. 
 \end{rema} 
\subsubsection*{Notation} Let us conclude this section by introducing the following notation.
 If $\mathcal{I} \hookrightarrow \mathcal{E} $ is a (multiplicative) isotropic subbundle over a Lie subgroupoid $\mathcal{O} \hookrightarrow \mathcal{P} $, we will denote by $\Phi^{-1}(\mathcal{O} )/(\mathcal{L} \cap \mathcal{I} )$ the leaf space of the foliation on $\Phi^{-1}(\mathcal{O} )$ induced by the action of $\mathcal{L} \cap \mathcal{I} $. In our applications, $\mathcal{L}$ will be a vector bundle of the form $\mathfrak{l}\times  \mathcal{P}$ and $\mathcal{I}= \mathfrak{i}  \times \mathcal{P}$ for some Lie algebras $\mathfrak{l}$ and $\mathfrak{i} $, so we will denote $\Phi^{-1}(\mathcal{O} )/(\mathcal{L} \cap \mathcal{I} )=\Phi^{-1}(\mathcal{O} )/(\mathfrak{l}  \cap \mathfrak{i}  )$ as well.

Also, we shall frequently use the following notation: if $X$ is a set, $X_\Delta$ denotes the diagonal inside $X \times X$.

\section{Poisson groupoids and flat bundles over decorated surfaces}\label{sec:quisur} Due to the length of this section, we will start with a brief description of its main result. Let $\Sigma$ be a compact and oriented surface such that each of its connected components has nonempty boundary. Let $V\subset \partial \Sigma$ be a finite set which intersects every connected component of $ \Sigma$. In this situation, $(\Sigma,V)$ is called a {\em marked surface} \cite[\S 4]{quisur}; if in addition, $V$ intersects every component of $\partial \Sigma$, we will say that $(\Sigma,V)$ is a {\em fully marked surface}. Let us consider the fundamental groupoid $\Pi_1(\Sigma,V)$ of $\Sigma$ based at $V$, which is the set of homotopy classes of paths in $\Sigma$ with fixed endpoints in $V$. Let $G$ be a connected Lie group such that its Lie algebra $\mathfrak{g} $ is endowed with an Ad-invariant nondegenerate symmetric bilinear form $\langle \cdot,\cdot \rangle $. The space of groupoid morphisms $\hom(\Pi_1(\Sigma,V),G)$ parameterizes the flat $G$-bundles over $\Sigma$ endowed with a trivialization over $V$. Let us consider the induced orientation on $\partial \Sigma$. This orientation allows us to define a {\em boundary graph $\Gamma $} in which $V$ is the set of vertices and the edges are the oriented boundary arcs in $\partial \Sigma$ lying between pairs of points in $V$ (so $\Gamma $ only contains edges lying in the marked components of $\partial \Sigma$). Let $E$ be the set of edges associated to $\Gamma $. It turns out that there is a distinguished exact Manin pair $(\mathcal{E}_\Gamma , \mathcal{L}_\Gamma ) $ over the product $G^E=\{(g_e)_{e\in E}|g_e\in G\}$ and a canonical Manin pair morphism
\[ R_{\Sigma,V}:(\mathbb{T} \hom(\Pi_1(\Sigma,V),G), T \hom(\Pi_1(\Sigma,V),G))\rightarrow ( \mathcal{E}_\Gamma ,\mathcal{L}_\Gamma )\] 
over the moment map $\mu: \hom(\Pi_1(\Sigma,V),G)  \rightarrow G^E $ which is given by restricting a morphism to each of the edges of $E$ with their positive orientation \cite[\S 5.2]{quisur2}; moreover, $R_{\Sigma, V}$ is an exact morphism of Manin pairs if $(\Sigma,V)$ is fully marked, see \cite[Thm. 4.1]{quisur2}. We will comment on terminology and the construction of $R_{\Sigma,V}$ in \S \ref{subsec:quapoimodspa}. If $G$ is 1-connected, $R_{\Sigma, V}$ can be constructed by reducing the canonical symplectic structure on the space of flat connections on the trivial $G$-bundle over $\Sigma$ which is given by \cite[Thm. 3.1]{meiwoover}. See \cite{alemeimal,cabguamei} for the special case in which $V$ consists of a single point for each connected component of $\partial \Sigma$. 

Suppose that $\mathcal{A} \subset \mathcal{E}_\Gamma$ is a Dirac structure and $H\subset G^E$ is an $\mathcal{A} $-invariant submanifold. Let us suppose as well that $\mathcal{A}$, $R_{\Sigma,V}$ and $R_i$ are cleanly composable, where 
\[ i:\mu^{-1}(H) \hookrightarrow \hom(\Pi_1(\Sigma,V),G) \] 
is the inclusion. It follows from \cite[Thm. 1.1]{quisur2} that the leaf space of the $\mathcal{L}_{\Gamma }\cap \mathcal{A} $-action on $\hom(\Pi_1(\Sigma,V),G)$ that we denote by
\[ \mathfrak{M}_G({\Sigma},{V})_{{H},{\mathcal{A}}}:=\mu^{-1}(H)/(\mathcal{L}_{\Gamma }\cap \mathcal{A}) \] 
inherits a Poisson structure which is symplectic if $(\Sigma,V)$ is fully marked and $TH$ is spanned by the $ \mathcal{A} $-action on $G^E$ (under the usual regularity assumptions on this quotient). Let us denote by $\mathfrak{d} =\mathfrak{g} \oplus \overline{ \mathfrak{g}} $ the direct sum Lie algebra equipped with the direct sum pairing that restricts to $\langle \cdot,\cdot \rangle $ on the first summand and $-\langle \cdot,\cdot \rangle$ on the second one and let us call it the {\em double of $\mathfrak{g} $}. In most of the known examples, the choice of $(H, \mathcal{A})$ depends on what we shall call a {\em decoration of $\partial\Sigma$} which consists of the following {\em boundary data}: 
\begin{enumerate} \item an immersed submanifold $H_e \hookrightarrow G$ for each $e\in E$ and 
\item a Lagrangian Lie subalgebra $\mathfrak{a}_v \hookrightarrow \mathfrak{d}$ for each $v\in V$. \end{enumerate}  
In this situation, we can take 
\[ H=\prod_{e\in E}H_e \quad \text{and } \mathcal{A}=\prod_{v\in V}\mathfrak{a}_v \times G^E. \]
If $V$ consists of a single point for each component of $\partial \Sigma$, one of the most natural decorations is given by taking $H_e \hookrightarrow G$ to be some conjugacy class for each edge $e$ and the diagonal Lie subalgebra $\mathfrak{a}_v=\mathfrak{g}_\Delta$ for every $v\in V$. Then $\mathcal{L}_{\Gamma }=\mathcal{A}$ and $\mathfrak{M}_G({\Sigma},{V})_{{H},{\mathcal{A} }}$ is a (singular) symplectic space whose connected components are the symplectic leaves of the Atiyah-Bott Poisson structure on $\hom(\Pi_1(\Sigma,V),G)/\mathcal{L}_\Gamma  $. 

Now suppose that we take two copies of $\Sigma$ and we identify them along some of the edges in $E$, thus producing a new surface $\widehat{\Sigma} $. In this situation, we can study those decorations $\widehat{H}$, $\widehat{\mathcal{A}} $ of $\widehat{\Sigma}$ which are symmetric with respect to the identification. It turns out that the identification also allows us to induce a groupoid structure on $\mathfrak{M}_G(\widehat{\Sigma},\widehat{V})_{\widehat{H},\widehat{\mathcal{A} }} $ which makes it into a Poisson groupoid, see Theorem \ref{thm:poigro}. 
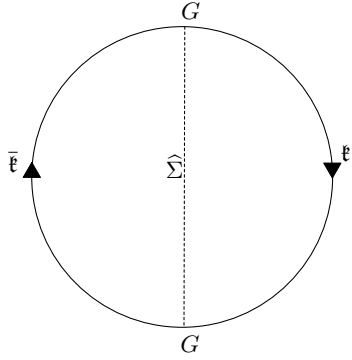
\begin{figure} \begin{center}
\begin{tikzpicture}[line cap=round,line join=round,>=stealth,x=1cm,y=1cm,scale=0.7]
\clip(-3.4825100212811275,-3.8) rectangle (4.672064095702574,1);
\draw [line width=0.9pt] (0.5705491909705938,-1.3763425004443581) circle (2cm);
\draw [<-,line width=0.9pt,dashed] (0.6052317972368269,0.6233567561496383)-- (0.594777037210723,-3.356884689106777);
\begin{scriptsize}

\draw [fill=black,shift={(-1.428533167320169,-1.31576424716282)},rotate=180] (0,0) ++(0 pt,3.75pt) -- ++(3.2475952641916446pt,-5.625pt)--++(-6.495190528383289pt,0 pt) -- ++(3.2475952641916446pt,5.625pt);
\draw[color=black] (-1.6783104978984837,-1.1704395039905213) node {${\mathfrak{a} } $};
\draw [fill=black,shift={(2.567029438536195,-1.257739713460263)},rotate=360] (0,0) ++(0 pt,3.75pt) -- ++(3.2475952641916446pt,-5.625pt)--++(-6.495190528383289pt,0 pt) -- ++(3.2475952641916446pt,5.625pt);
\draw[color=black] (2.9,-1.048120892235766) node {$\mathfrak{a}^\vee $};

\draw[color=black] (0.6865159960267896,0.8376243723167159) node {$G$};

\draw[color=black] (0.67632277838056,-3.6) node {$G$};
\draw[color=black] (0.3,-1.241792027514129) node {$\widehat{\Sigma}$};
\end{scriptsize}
\end{tikzpicture}\end{center}\caption{A decorated disk which induces a Poisson group structure on $\mathfrak{M}_G(\widehat{\Sigma},\widehat{V})_{\widehat{H},\widehat{\mathcal{A} }}$}\label{fig:bigon} \end{figure} 
   
Let us describe the simplest non trivial situation. We can take $\widehat{\Sigma}$ to be a disk with two marked points on its boundary and we can imagine that it is obtained by identifying two disks along one edge, see Figure \ref{fig:bigon}. If there is a Lagrangian Lie subalgebra $\mathfrak{a} \hookrightarrow \mathfrak{d} $ which intersects $\mathfrak{g}_\Delta \hookrightarrow \mathfrak{d} $ trivially, we can decorate both edges of $\partial\widehat{\Sigma}$ with $G$, one vertex with $\mathfrak{a} $ and the other one with $\mathfrak{a}^\vee =\{u\oplus v|v\oplus u\in \mathfrak{a} \}$. Notice that $\mathfrak{M}_G(\widehat{\Sigma},\widehat{V})_{\widehat{H},\widehat{\mathcal{A}}} =G$ as a manifold. Then what we get out of our general observation is a Poisson group structure on $G$, see Example \ref{exa:poigr}.

\subsection{The $\Gamma$-twisted Cartan-Dirac structure}\label{subsec:twicar} Let $\Gamma =(E,V)$ be the boundary graph associated to a marked surface as before. For every $e\in E$, let us denote by $\mathtt{S}(e)$ its source vertex and by $\mathtt{T}(e)$ its target vertex. Consider the action of $\mathfrak{d}^V=\{X_v\oplus Y_v \in \mathfrak{d} |X_v,Y_v\in \mathfrak{g},\, v\in V \}$ on $G^E$ given by 
\begin{align} (X_v\oplus Y_v)_{v\in V}\mapsto (Y_{\mathtt{T}(e)}^r-X_{\mathtt{S}(e)}^l)_{e\in E}\in \mathfrak{X}(G^E)   \label{eq:infact} \end{align}  
for all $X_v\oplus Y_v \in \mathfrak{d} $ (recall that our convention is that the Lie bracket on a Lie algebra is defined using right invariant vector fields). This action has coisotropic stabilizers (in fact Lagrangian) with respect to the product metric on $\mathfrak{d}^V$, so there is a canonical action Courant algebroid structure on $\mathcal{E}_\Gamma := \mathfrak{d}^V \times G^E$ \cite{coupoi}. Let us point out that $\mathcal{E}_{\Gamma }$ is an exact Courant algebroid, see \cite[Rem. 3.1]{quisur2}.  
\begin{defi}[\cite{quisur2}] The diagonal inclusion $\mathfrak{g}^V_\Delta \hookrightarrow \mathfrak{d}^V $ induces a Dirac structure $\mathcal{L}_\Gamma :=\mathfrak{g}^V_\Delta \times G^E$ inside $\mathcal{E}_{\Gamma } $ which we shall call the {\em $\Gamma$-twisted Cartan-Dirac structure}. If $V$ consists of a single point, we recover the Cartan-Dirac structure of Example \ref{ex:quaham}, see \cite[\S 3.3]{purspi}. \end{defi} 

As we have mentioned, there is a canonical morphism of Manin pairs:
\begin{align}  R_{\Sigma,V}:(\mathbb{T}M,TM) \rightarrow (\mathcal{E}_\Gamma ,\mathcal{L}_\Gamma ),\label{eq:canmp} \end{align}   
where $M:=\hom(\Pi_1(\Sigma,V),G)$ and the moment map $\mu: M \rightarrow G^E $ is given by applying a representation to each of the oriented edges of $E$; moreover, $R_{\Sigma, V}$ is an exact morphism of Manin pairs if $(\Sigma,V)$ is fully marked, we will review this construction in \S \ref{subsec:quapoimodspa} based on \cite{quisur,quisur2}. The associated $\mathcal{L}_\Gamma $-action on $M$ integrates to the residual gauge action of $ G^V$ on $M$ given by 
\begin{align} (g\cdot \rho)(e)=g_{\mathtt{T}(e)}\rho(e)g_{\mathtt{S}(e)}^{-1}\quad  \forall \rho \in M, \label{eq:gauact} \end{align}   
where $e\in \Pi_1(\Sigma,V)$ and $g\in G^V$. 
\begin{rema}\label{rem:infact} It is notationally convenient to distinguish between $G^V$ and $G^E$. In case we need to identify $G^V\cong G^E$, we will use the identification $E\cong V$ given by $e\mapsto \mathtt{T}(e)$ for all $e\in E$. So if $\partial \Sigma$ consists of a single component, we shall denote the elements of $V$ by $v_i$ and the elements of $E$ as $e_i$ in such a way that $v_i=\mathtt{T}(e_i)$ and $\mathtt{S} (e_i)=v_{i-1}$, where $i=1\dots n$ and $n$ is the number of elements in $V$. In this situation, the $\mathfrak{g}^n$-action \eqref{eq:infact} on $G^E=G^n$ is given by: 
\[(u_1\oplus v_1,u_2\oplus v_2,\dots,u_n\oplus v_n) \mapsto ( v_{n}^r-u_{n-1}^l,v_{n-1}^r-u_{n-2}^l,\dots,v_1^r-u_{n}^l)\in \mathfrak{X}(G^n), \] 
and it is canonically integrated by the $G^n$-action 
\[ (g_n,\dots,g_1)\cdot (a_n,\dots,a_1)=(g_na_ng_{n-1}^{-1},g_{n-1}a_{n-1}g_{n-2}^{-1},\dots,g_1a_1g_n^{-1}), \] 
for all $g_i,a_i\in G$ and $i=1\dots n$. \end{rema}
\begin{exa}\label{exa:modspatri} For simplicity, a disk $\Sigma$ with $n$ marked points on its boundary shall be called a $n$-gon (or the corresponding polygon) in what follows and shall be represented accordingly in the figures. If $(\Sigma,V)$ is a triangle, then we can use the homotopy classes of two positively oriented boundary edges of $\Sigma$ to identify $ \hom(\Pi_1(\Sigma,V),G)\cong G^2$. Introducing a splitting of $\mathcal{E}_{\Gamma }$ \cite[Rem. 3.1]{quisur2}, $R_{\Sigma,V}$ is given by
\begin{align}  &R_{\Sigma,V}\cong  \{X\oplus \mu^*\alpha -i_X \omega ,T\mu(X)\oplus \alpha )\in \mathbb{T}G^2 \times \mathbb{T}_\eta G^3|X\oplus \alpha \in TG^2 \oplus \mu^*T^*G^3  \} \notag \\
&\mu:G^2 \rightarrow G^3, \quad \mu(a_2,a_1)=(a_2,a_1,a_1^{-1}a_2^{-1}) , \quad \omega_{(a_1,a_2)}=\frac{1}{2}\langle {a_2}^*\theta^l,{a_1}^*\theta^r \rangle\quad \forall a_1,a_2\in G ;\label{eq:polwie} 
\end{align} 
where $\theta^l,\theta^r$ are the Maurer-Cartan 1-forms on $G$ and $\eta=\sum_{i=1}^3 \text{pr}_i^* H$ for $H$ the Cartan 3-form as in Example \ref{ex:quaham}.

The 2-form $\omega\in \Omega^2(G^2) $ serves as the building block to construct $ R_{\Sigma,V}$ for a general surface, see \S \ref{subsec:qhammodspa}. \end{exa}  
\subsection{Boundary decorations} Now let us fix a Lagrangian Lie subalgebra $\mathfrak{a}_v\subset \mathfrak{d} $ for every $v\in V$. The product $\mathcal{A}=\prod_{v\in V} \mathfrak{a}_v \times G^E$ is a Dirac structure inside $\mathcal{E}_\Gamma$. Suppose that $H_e \hookrightarrow G$ is an immersed submanifold for every $e\in E$ such that $H=\prod_{e \in E}H_e$ is a union of $\mathcal{A} $-orbits. The choice of $(H,\mathcal{A})$ is called a {\em coloring of $\partial \Sigma$} in \cite{quisur2}, we shall call it a {\em decoration} in what follows. Let $K_v \hookrightarrow G$ be the connected subgroup integrating $\mathfrak{a}_v\cap \mathfrak{g}_\Delta \hookrightarrow \mathfrak{g}_\Delta $. Suppose that $\mu$ is transverse to $H$ and suppose that $\mathcal{A} $ and $R_{\Sigma,V} $ are cleanly composable. If the action of $K=\prod_{v\in V} K_v \hookrightarrow G^V$ restricted to $\mu^{-1}(H)$ is free and proper, then \cite[Thm. 1.1]{quisur2} implies that there is a canonical Poisson structure on the {\em moduli space associated to the decoration $(H,\mathcal{A} )$}:  
\[ \mathfrak{M}_G(\Sigma,V)_{H,\mathcal{A}}:= \mu^{-1}(H)/K=\mu^{-1}(H)/(\mathcal{L}_{\Gamma } \cap \mathcal{A}  );   \] 
for the sake of brevity, we will also call $\mathfrak{M}_G(\Sigma,V)_{H,\mathcal{A}}$ a {\em decorated moduli space}. Let us stress once again that this Poisson structure is symplectic if $(\Sigma,V)$ is fully marked and $TH$ is spanned by the $\mathcal{A}$-action on $G^E$. 

We will always assume that our decorated moduli spaces are smooth as above. However, many decorated moduli spaces are singular quotients. For instance, one can take $G=SL_2(\mathbb{C})$, $H_e=G$ for every $e\in E$ and $\mathfrak{a}_v $ as in \eqref{eq:lagcoi} for $\mathfrak{c} \hookrightarrow \mathfrak{g}  $ some Borel subalgebra; in this situation, $K_v$ is a Borel subgroup for every $v\in V$. Then $\mathfrak{M}_G(\Sigma,V)_{H,\mathcal{A} }$ is a decorated character variety in the sense of \cite{chemazrub}. If $\mathcal{A}=\mathcal{L}_{\Gamma }  $ and $H=G^E$, then we get the Atiyah-Bott Poisson structure on the full moduli space of flat $G$-bundles over $\Sigma$ which is singular in general.
\subsection{Poisson and symplectic groupoids via gluing and decorating surfaces}\label{subsec:symdec} Let us continue using the notation of \S \ref{subsec:twicar}. Suppose that we take two copies of $\Sigma$ and let $i_a:\Sigma \hookrightarrow \Sigma \coprod \Sigma$ be the two inclusions for $a=1,2$. Now we identify the two surfaces along a proper subset $S\subset E$ in order to get a new surface $\widehat{\Sigma}=\Sigma\cup_{S} \Sigma$:
\[ \Sigma\cup_{S} \Sigma= \Sigma\coprod \Sigma/\sim, \quad i_1(x)\sim i_2(x)\quad  \forall x \in e,\, \forall e\in S. \] 
In order to get an orientation of $\widehat{\Sigma}$, let us reverse the orientation of the copy of $\Sigma$ corresponding to $i_2$. Let us consider the vertex set 
\[ \widehat{V}=i_1(V_0)\cup i_2(V_0)\hookrightarrow \partial\widehat{\Sigma}, \] 
where $V_0\subset V$ is obtained by removing from $V$ the vertices adjacent to edges in $S$. We will assume that $V$ is such that $\widehat{V}$ has non trivial intersection with every component of $\partial \widehat{\Sigma}$ and each connected component of $\widehat{\Sigma} $ has non empty boundary. We will denote by $j_a:E \hookrightarrow \widehat{E}$ for $a=1,2$ the two natural inclusions.

Let $\widehat{E}$ be the edge set associated to $\widehat{V} $ and denote by $\widehat{\Gamma }=(\widehat{E},\widehat{V}) $ the boundary graph of $\widehat{\Sigma} $. Let $\mathfrak{a}_v \hookrightarrow \mathfrak{d} $ be a Lagrangian subalgebra for every $v\in V_0$ and let $H_e\subset G$ be an immersed submanifold for every $e\in {E}-S $. Then we can label the elements of $\widehat{V}$ with the $\mathfrak{a}_v$ and the elements in $\widehat{E}$ with the $H_e$ in a symmetric way with respect to the identified edges in $S \hookrightarrow \widehat{\Sigma}$: 
\begin{align}   \widehat{\mathfrak{a}}_{i_1(v)}=\mathfrak{a}_v, \quad \widehat{\mathfrak{a}}_{i_2(v)}= \mathfrak{a}^\vee_v, \quad \forall v\in V_0, \quad  \widehat{H}_{j_1(e)}=H_e,\quad \widehat{H}_{j_2(e)}=H_e^{-1},  \quad \forall e\in E-S; \label{eq:symdec} \end{align} 
where $\mathfrak{a}^\vee_v=\{u\oplus v\in \mathfrak{d} :v\oplus u\in \mathfrak{a}_v\}$. Notice that $j_1(e)=j_2(e)$ if $e$ is adjacent to an edge in $S$, so the submanifolds $H_e$ corresponding to such an $e$ are forced to satisfy $H_e=H_e^{-1}$ in order to have $\widehat{H}_{j_1(e)}=\widehat{H}_{j_2(e)}$ well defined. In fact, we shall assume that, in such a situation, $H_e$ is a Lie subgroup of $G$. So we obtain boundary data for $\widehat{\Sigma} $:
\[     \widehat{H}=\prod_{e\in \widehat{E}}\widehat{H}_e  \subset G^{\widehat{E}} \quad \text{and}\quad\widehat{ \mathcal{A} } =\prod_{v\in \widehat{V}} \widehat{\mathfrak{a} }_v \times G^{\widehat{E}}  \subset \mathcal{E}_{\widehat{\Gamma }}; \]
provided that $\widehat{H}$ is $\widehat{\mathcal{A} }$-invariant, see Figure \ref{fig:dousur}. 
\begin{defi}\label{def:symdec} In the situation described above, we shall say that the decoration $(\widehat{H},\widehat{\mathcal{A}} )$ of $(\widehat{\Sigma},\widehat{V})  $ is {\em symmetric with respect to the decomposition $\widehat{\Sigma}= \Sigma\cup_{S} \Sigma$}. \end{defi} 

\begin{thm}\label{thm:poigro} Suppose that $H_e $ is a subgroup of $G$ for all $e\in E$ which are adjacent to an edge in $S$ and suppose that the moduli space $\mathfrak{M}_G(\widehat{\Sigma},\widehat{V})_{\widehat{H},\widehat{\mathcal{A} }} $ is smooth. Then there is a groupoid structure on $\mathfrak{M}_G(\widehat{\Sigma},\widehat{V})_{\widehat{H},\widehat{\mathcal{A}}} $ which together with its canonical Poisson structure makes it into a Poisson groupoid. In particular, if the Poisson bracket on $\mathfrak{M}_G(\widehat{\Sigma},\widehat{V})_{\widehat{H},\widehat{\mathcal{A}}} $ is non-degenerate, then we get a symplectic groupoid structure. \end{thm}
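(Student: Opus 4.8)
The plan is to realise $\mathfrak{M}_G(\widehat{\Sigma},\widehat{V})_{\widehat{H},\widehat{L}}$ as the reduction $J^{-1}(\mathcal{O})/(L\cap E)$ of Proposition \ref{pro:mulmp}(2)--(3), taking $(\mathbb{A},E)=(\mathbb{A}_{\widehat{\Gamma}}, L_{\widehat{\Gamma}})$, $R = R_{\widehat{\Sigma},\widehat{V}}$, $L=\widehat{L}$, $\mathcal{O}=\widehat{H}$ and $J=\mu$, so that $J^{-1}(\mathcal{O})/(L\cap E)=\mu^{-1}(\widehat{H})/(\widehat{L}\cap L_{\widehat{\Gamma}})$ is exactly the moduli space in question. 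All the analytic content — existence and non-degeneracy of the bracket, smoothness of the quotient — is already provided by \cite[Thm. 1.1]{quisur2} under the standing smoothness hypothesis. What remains is entirely structural: I must exhibit compatible Lie groupoid structures on $G^{\widehat{E}}$ and on $\mathcal{M}=\hom(\Pi_1(\widehat{\Sigma},\widehat{V}),G)$ for which $(\mathbb{A}_{\widehat{\Gamma}}, L_{\widehat{\Gamma}})$ is an exact \emph{multiplicative} Manin pair, $R_{\widehat{\Sigma},\widehat{V}}$ is a morphism of multiplicative Manin pairs over the groupoid morphism $\mu$, the Dirac structure $\widehat{L}$ is multiplicative, and $\widehat{H}$ is a Lie \emph{subgroupoid} that is a union of $\widehat{L}$-orbits.

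First I would read the groupoid structure directly off the gluing. Writing $\widehat{\Sigma}=\Sigma\cup_S\Sigma$, restriction along the two inclusions $i_1,i_2$ presents a flat $G$-bundle on $\widehat{\Sigma}$, trivialised over $\widehat{V}$, as a pair of flat bundles on the two copies of $\Sigma$ agreeing over the glued edges $S$. This realises $\mathcal{M}$ as a \emph{pair-groupoid-type} object over the moduli space of a single copy of $\Sigma$: source and target are the two restrictions, units are the doubled representations, inversion swaps the two copies, and multiplication concatenates two bundles along a shared middle copy of $\Sigma$ and retains the outer two, exactly as one composes in a pair groupoid; the matching over $S$ propagates so that the composite again descends to $\widehat{\Sigma}$. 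The nondegeneracy conditions imposed on $S$ guarantee that $\widehat{V}$ meets every boundary component, so that all these are honest moduli spaces, and the reversal of orientation on the $i_1$-copy is precisely what makes source and target land consistently in a common base. Passing to holonomies this descends to a Lie groupoid structure on $\mathcal{P}=G^{\widehat{E}}$ for which $\mu$ is a groupoid morphism; multiplicativity of $(\mathbb{A}_{\widehat{\Gamma}}, L_{\widehat{\Gamma}})$ and of $R_{\widehat{\Sigma},\widehat{V}}$ should then follow from the functoriality of the assignment $R_{\Sigma,V}$ under gluing, i.e. from the fact that the Courant-theoretic data glue compatibly with the topological concatenation.

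Next I would check that the symmetric decoration is compatible with this multiplication, which is the heart of the argument. The assignment $\widehat{\mathfrak{l}}_{i_1(v)}=\overline{\mathfrak{l}_v}$, $\widehat{\mathfrak{l}}_{i_2(v)}=\mathfrak{l}_v$ is exactly the condition that $\widehat{L}=\prod_{v}\widehat{\mathfrak{l}}_v\times G^{\widehat{E}}$ be a multiplicative Dirac structure: the bar on the $i_1$-copy reflects the orientation reversal, so that upon concatenation the shared middle data cancels and $\widehat{L}$ restricted to the graph of the multiplication is again Lagrangian. Dually, $\widehat{H}_{j_1(e)}=H_e^{-1}$, $\widehat{H}_{j_2(e)}=H_e$ makes $\widehat{H}$ closed under the groupoid multiplication and under inversion, and the requirement that $H_e$ be a \emph{subgroup} of $G$ when $e$ is adjacent to $S$ — so that $j_1(e)=j_2(e)$ and $H_e=H_e^{-1}$ — is precisely what is needed for $\widehat{H}$ to contain the units and hence to be a Lie subgroupoid rather than merely a submanifold. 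Granting the transversality of $\mu$ to $\widehat{H}$ and the clean composability of $\widehat{L}$, $R_{\widehat{\Sigma},\widehat{V}}$ and $R_i$ (which are subsumed in the hypothesis that the moduli space is smooth), $\widehat{H}$ is then a union of $\widehat{L}$-orbits by the very construction of the decoration.

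With these structures in place the conclusion is immediate: Proposition \ref{pro:mulmp}(2) yields the Poisson groupoid structure on $\mathfrak{M}_G(\widehat{\Sigma},\widehat{V})_{\widehat{H},\widehat{L}}=\mu^{-1}(\widehat{H})/(\widehat{L}\cap L_{\widehat{\Gamma}})$ (with the unconstrained case $\widehat{H}=G^{\widehat{E}}$ reducing to item (1)), and item (3) upgrades it to a symplectic groupoid exactly when $T\widehat{H}$ is spanned by the $\widehat{L}$-action — which, since $R_{\widehat{\Sigma},\widehat{V}}$ is always an exact Courant morphism, is equivalent to the induced bracket being non-degenerate. I expect the main obstacle to lie in the second and third steps: proving that $\widehat{L}$ is genuinely multiplicative and that $\widehat{H}$ is genuinely a subgroupoid requires tracking how the residual gauge action and the holonomy constraints interact with the concatenation multiplication across the glued edges $S$, and in particular verifying that the projection $\mu^{-1}(\widehat{H})\to\mathfrak{M}_G(\widehat{\Sigma},\widehat{V})_{\widehat{H},\widehat{L}}$ is a groupoid morphism, so that the reduced space inherits a well-defined groupoid structure as demanded by the hypotheses of Proposition \ref{pro:mulmp}.
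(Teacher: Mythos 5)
Your overall strategy---reduce to Proposition \ref{pro:mulmp} by exhibiting multiplicative structures compatible with the symmetric decoration---is the right spirit, but the concrete realization has a genuine gap: the groupoid structure you want upstairs does not exist. You propose to put a ``pair-groupoid-type'' multiplication on $\mathcal{M}=\hom(\Pi_1(\widehat{\Sigma},\widehat{V}),G)$ itself, with source and target given by restriction to the two copies of $\Sigma$, and then to apply Proposition \ref{pro:mulmp} directly to $R_{\widehat{\Sigma},\widehat{V}}$. This fails because $\widehat{V}$ contains no points on the glued edges $S$: van Kampen's theorem for the fundamental groupoid requires basepoints on the intersection of the two pieces, so a representation based at $\widehat{V}$ is \emph{not} determined by its two restrictions (the holonomies of paths crossing $S$ are lost), and conversely two matching restrictions do not determine a glued representation (gluing requires an identification of the bundles along $S$, which is extra data). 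The bigon of Figure \ref{fig:bigon} already kills the proposal: there $\mathcal{M}\cong G$, the restrictions to the two half-disks are points, so ``concatenation of restrictions'' cannot produce the group multiplication on $G$ that the theorem asserts. A second, independent failure is the claim that $G^{\widehat{E}}$ carries a Lie groupoid structure making $(\mathbb{A}_{\widehat{\Gamma}},L_{\widehat{\Gamma}})$ a multiplicative Manin pair with $\mu$ a groupoid morphism: the Cartan--Dirac structure is not multiplicative over group-type multiplications on powers of $G$ (this is exactly why \S\ref{subsec:catgro} needs a Lie 2-group, i.e.\ a groupoid direction transverse to the group direction, to get Proposition \ref{pro:catcardir}).

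The paper's proof circumvents precisely this obstruction. It works with the intermediate vertex set $V\cup_S V$ (where van Kampen does apply), so that $\hom(\Pi_1(\widehat{\Sigma},V\cup_S V),G)$ embeds as a subgroupoid of the pair groupoid $M\times M\rightrightarrows M$ with $M=\hom(\Pi_1(\Sigma,V),G)$; the multiplicative Manin pair is then $\overline{(\mathbb{A}_\Gamma,L_\Gamma)}\times(\mathbb{A}_\Gamma,L_\Gamma)$ over the pair groupoid $G^E\times G^E\rightrightarrows G^E$, with morphism $(R_{\Sigma,V},R_{\Sigma,V})$. The decoration is extended to multiplicative data $(\mathcal{H},\mathcal{L})$ there---with diagonal subgroupoids at the edges in $S$ and at the vertices in $V-V'$, and action groupoids $H_e\times G_e$, $G_e\times H_e$ at edges adjacent to $S$ (this is where the subgroup hypothesis on $H_e$ enters, not where you placed it)---and Proposition \ref{pro:mulmp} is applied in \emph{that} setting. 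The groupoid structure on the moduli space only appears after the gauge quotient by $\mathcal{K}$: the multiplication is defined on equivalence classes and does not lift to $\widehat{\mu}^{-1}(\widehat{H})$. Finally, a separate sewing argument (diagrams \eqref{eq:mpm1} and \eqref{eq:mpm2}, resting on the uniqueness of exact Manin pair morphisms for polygons) is needed to identify the resulting Poisson manifold with $\mathfrak{M}_G(\widehat{\Sigma},\widehat{V})_{\widehat{H},\widehat{L}}$; this identification is not automatic and is an essential part of the proof that your proposal omits entirely.
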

\begin{rema} Theorem \ref{thm:poigro} is formulated for the fully marked surface $(\widehat{\Sigma},\widehat{V})$. However, the case of a marked but not fully marked surface can be reduced to this situation as follows. We can add a single marked point for every unmarked boundary component of $\partial \widehat{\Sigma}$ and we can decorate it with the diagonal Lie subalgebra $\mathfrak{g}_\Delta \hookrightarrow \mathfrak{d} $. The presence of these additional decorated marked points does not change the corresponding decorated moduli space, see \S \ref{subsec:forver}. \end{rema}   
\begin{figure}
    \centering
    \begin{tikzpicture}[line cap=round,line join=round,>=triangle 45,x=1cm,y=1cm,scale=0.8]
\clip(1.753545534118291,6.1086062742733054) rectangle (14.633379703553494,10.548421890992035);
\draw [rotate around={-85.60129464505421:(3.04463280670525,7.612680960620123)},line width=1pt] (3.04463280670525,7.612680960620123) ellipse (1.1196333302822727cm and 1.0815611920004022cm);
\fill[rotate around={-85.60129464505421:(3.04463280670525,7.612680960620123)},line width=1pt,fill=black,fill opacity=0.1] (3.04463280670525,7.612680960620123) ellipse (1.1196333302822727cm and 1.0815611920004022cm);
\draw [rotate around={-88.22853025996632:(8.967238417201926,8.08449115452098)},line width=1pt] (8.967238417201926,8.08449115452098) ellipse (1.1073956672484433cm and 0.9675500931955642cm);
\fill[rotate around={-88.22853025996632:(8.967238417201926,8.08449115452098)},line width=1pt,fill=black,fill opacity=0.1] (8.967238417201926,8.08449115452098) ellipse (1.1073956672484433cm and 0.9675500931955642cm);
\draw [shift={(6.417901263749217,3.075720101840686)},line width=0.5pt]  plot[domain=1.0726635963850113:2.2325463216769985,variable=\t]({1*4.545315699480983*cos(\t r)+0*4.545315699480983*sin(\t r)},{0*4.545315699480983*cos(\t r)+1*4.545315699480983*sin(\t r)});
\draw [shift={(6.352854872174809,2.9500861032469357)},line width=0.5pt]  plot[domain=1.1392557067182094:2.1718700812926475,variable=\t]({1*6.830165919331009*cos(\t r)+0*6.830165919331009*sin(\t r)},{0*6.830165919331009*cos(\t r)+1*6.830165919331009*sin(\t r)});
\draw [line width=1pt] (13.032955350446898,7.827786354316632) circle (1.3325771132126754cm);
\draw [line width=1pt] (13.032955350446898,7.827786354316632) circle (0.5393091036934362cm);
\begin{scriptsize}
\draw [fill=black] (3.2444583005926733,8.711721177000946) circle (1.5pt);
\draw[color=black] (3.5,8.931343559827033) node {${\mathfrak{a} }_v $};
\draw[color=black] (3.5,8.3) node {$H_e$};
\draw [fill=black] (9.927510929494265,8.21215744228239) circle (1.5pt);
\draw[color=black] (10.2,8.455811583422125) node {$\mathfrak{a}^\vee_v$};
\draw[color=black] (8.5,8.5) node {$H_e^{-1}$};
\draw[color=black] (6.144628039644512,8.6) node {$\widehat{\Sigma}$};
\draw [fill=black] (11.81523099496611,8.368997178974755) circle (1.5pt);
\draw[color=black] (11.517127602857467,8.476046986673396) node {$\mathfrak{a}_v$};
\draw[color=black] (12.802075709313296,8.1) node {$G$};
\draw[color=black] (12.802075709313296,9.4) node {$H_e$};
\draw[color=black] (12.802075709313296,7) node {$\Sigma$};
\end{scriptsize}
\end{tikzpicture}
\caption{If $S$ is one of the boundary components of the annulus $\Sigma$ with one marked point on each boundary component, then $\widehat{\Sigma}$ is identified with $\Sigma$ itself; the base of the Poisson groupoid structure corresponding to a symmetric decoration is determined by $\Sigma$ with only one marked point }\label{fig:dousur} 
\end{figure} 
\begin{rema} Symplectic groupoids have also appeared in the context of moduli spaces of singular flat connections, see \cite[Proposition 7]{quahammer}. We will explain how to adapt our construction to that setting in a separate work. \end{rema} 
The base of the groupoid structure on $\mathfrak{M}_G(\widehat{\Sigma},\widehat{V})_{\widehat{H},\widehat{\mathcal{A}}} $ is the moduli space corresponding to a decoration of $(\Sigma,V_0)$. Let $\Gamma_0=(E_0,V_0)$ be the boundary graph of $(\Sigma,V_0)$. Note that the set of edges in $E_0$ that do not contain a vertex in $V-V_0$ is canonically identified with the set of edges in $E$ that are not adjacent to an edge in $S$. Also, $(\Sigma,V_0)$ may not be fully marked as Figure \ref{fig:dousur} shows. Let us define $\check{H}=\prod_{e\in E_0}\check{H}_e$, $\check{\mathcal{A}}=\prod_{v\in V_0}\check{\mathfrak{a}}_v \times G^E $, where 
\[ \check{H}_e=\begin{cases} H_e,\quad \text{if $e$ does not contain a vertex in $V-V_0$,} \\

G,\quad \text{if $e$ contains a vertex in $V-V_0$;} \end{cases}  \check{\mathfrak{a}}_v= \mathfrak{a}_v ,\quad \text{if $v\in V_0$,} \]
see Figure \ref{fig:dousur} for an example of this. The base of a Poisson groupoid inherits a unique Poisson structure such that the target map is a Poisson morphism \cite{weicoi}. In our situation, we shall see that the base Poisson manifold of $\mathfrak{M}_G(\widehat{\Sigma},\widehat{V})_{\widehat{H},\widehat{\mathcal{A} }}$ is $ \mathfrak{M}_G(\Sigma,V_0)_{\check{H},\check{\mathcal{A}}}$. Note that if $S=\emptyset$, then $V=V_0$ and $\mathfrak{M}_G(\widehat{\Sigma},\widehat{V})_{\widehat{H},\widehat{\mathcal{A} }} \rightrightarrows \mathfrak{M}_G(\Sigma,V_0)_{\check{H},\check{\mathcal{A}}}$ is the pair groupoid over $ \mathfrak{M}_G(\Sigma,V_0)_{\check{H},\check{\mathcal{A}}}$. 

\begin{prop}\label{rem:baspoigro} The base manifold of the Poisson groupoid structure on $\mathfrak{M}_G(\widehat{\Sigma},\widehat{V})_{\widehat{H},\widehat{\mathcal{A} }}$ defined by Theorem \ref{thm:poigro} is canonically identified with $\mathfrak{M}_G(\Sigma,V_0)_{\check{H},\check{\mathcal{A}}}$. \end{prop}   
This last proposition implies that, if $\mathfrak{M}_G(\widehat{\Sigma},\widehat{V})_{\widehat{H},\widehat{\mathcal{A}}} $ is symplectic, then it is an integration of the Poisson structure on $\mathfrak{M}_G(\Sigma,V_0)_{\check{H},\check{\mathcal{A}}}$. We will give a proof of Proposition \ref{rem:baspoigro} in \S \ref{subsec:quoid}.

Let us use the notation $M:=\hom(\Pi_1(\Sigma,V),G)$ with moment map $\mu:M \rightarrow G^E$ in the course of this section. The proof of Theorem \ref{thm:poigro} is based on the following two observations.
\begin{enumerate} \item The canonical morphism of Manin pairs 
\[ R_{\widehat{\Sigma},\widehat{V}  }: (\mathbb{T}\hom (\Pi_1(\widehat{\Sigma},\widehat{V} ),G),T\hom (\Pi_1(\widehat{\Sigma},\widehat{V} ),G)) \rightarrow (\mathcal{E}_{\widehat{\Gamma}} ,\mathcal{L}_{\widehat{\Gamma }}) \]
inherits a multiplicative structure from the pair groupoid structure on
\[ M \times M \rightrightarrows M. \]
In particular, this means that the canonical moment map $\widehat{\mu}:\hom (\Pi_1(\widehat{\Sigma},\widehat{V} ),G) \rightarrow G^{\widehat{E} }$ is a Lie groupoid morphism with respect to suitable groupoid structures on $\hom (\Pi_1(\widehat{\Sigma},\widehat{V} ),G)$ and on $G^{\widehat{E} }$.
\item A symmetric decoration $(\widehat{H},\widehat{\mathcal{A}} )$ of $(\widehat{\Sigma},\widehat{V})$ consists of a multiplicative Dirac structure $\widehat{\mathcal{A}  }\hookrightarrow \mathcal{E}_{\widehat{\Gamma } } $ and of a Lie subgroupoid $\widehat{H}  \hookrightarrow G^{\widehat{E} } $ with respect to the CA-groupoid structure induced on $\mathcal{E}_{\widehat{\Gamma}} $. \end{enumerate}  
Once these two observations are established, we can deduce the main claim by an application of Proposition \ref{pro:mulmp}.
 
Let us start by describing the groupoid structure on $G^{\widehat{E}  }$. Denote by $E'\hookrightarrow E$ the set of edges that are not adjacent to edges in $S$. Let $\widehat{E}'' $ be the set of edges in $\widehat{E}$ which arise from gluing edges in two different copies of $\Sigma$ and observe that $\widehat{E} -  \widehat{E}'' =j_1(E') \cup j_2(E')$. Let us denote by $G_e$ the copy of $G$ inside $G^{\widehat{E}}$ which corresponds to the edge $e\in E$ and let us denote by $S_0$ the set of edges in $S$ that are not loops. We will need a special label for a factor $G_e$ which corresponds to $e \in \widehat{E}''$, namely, $G_e$ will be denoted by either $G_{\mathtt{T}(f)}$ or $G_{\mathtt{S}(f)}$ depending on whether the vertex $\mathtt{T}(f)$ or $\mathtt{S}(f)$ lies in it for some $f\in S_0$. A factor $G_{\mathtt{T}(f)}$ will be equipped with the group structure on $G$ and we will equip $G_{\mathtt{S}(f)}$ with the opposite group structure $G_{\text{opp} }$ (despite being isomorphic, it is convenient to distinguish between $G$ and $G_{\text{opp} }$). Define the direct product groupoid structure on $G^{\widehat{E} }$ coming from the identification thus established: 
\begin{align}  \left(G^{\widehat{E} } \rightrightarrows G^{E'}\right)\cong \left( \prod_{e\in S_0}\left(G_{\mathtt{T}(e)} \times G_{\mathtt{S}(e)}\right) \rightrightarrows \{1\}\right) \times \left(\prod_{e\in E'} \left(G_{j_1(e) }\times G_{j_2(e)} \rightrightarrows G_e\right)\right), \label{eq:gpdtar1} \end{align} 
where the first factor is a product of copies of the Lie group $G$ and its opposite $G_{\text{opp}}$ and the second factor is the pair groupoid over $G^{E'}$. 

We will denote by $\mathfrak{d}_v$ the copy of $\mathfrak{d} $ corresponding to a vertex $v\in \partial \Sigma$ and we shall label its elements as $X_v\oplus Y_v\in \mathfrak{d}_v $ in what follows. To simplify the notation, in the following discussion, we will use the identification $\mathcal{E}_{\widehat{\Gamma}}=\left(\prod_{v\in V_0}\mathfrak{d}_{i_1(v)} \times \mathfrak{d}_{i_2(v)}\right) \times G^{\widehat{E} }\cong\left(\mathfrak{d}^{V_0} \times \mathfrak{d}^{V_0}\right)\times G^{\widehat{E} } $ defined by considering $  (X_v\oplus Y_v, W_v\oplus Z_v)\in \mathfrak{d}_v \times \mathfrak{d}_v\subset \mathfrak{d}^{V_0} \times \mathfrak{d}^{V_0}  $ as an element of $\mathfrak{d}_{i_1(v)}\times \mathfrak{d}_{i_2(v)} $ for all $v\in V_0$. The VB-groupoid structure on $\mathcal{E}_{\widehat{\Gamma}} \rightrightarrows \mathcal{E}_{\widehat{\Gamma}}^0$ over $G^{\widehat{E} } \rightrightarrows G^{E'}$ is induced by identifying it with the following product of groupoids
\[ \left(\mathcal{E}_{\widehat{\Gamma}} \rightrightarrows \mathcal{E}_{\widehat{\Gamma}}^0\right)\cong \left(\mathfrak{d}^{V_0} \times \mathfrak{d}^{V_0} \rightrightarrows \mathfrak{d}^{V_0}  \right) \times  \left(G^{\widehat{E} } \rightrightarrows G^{E'}\right);   \]
where $\mathfrak{d}^{V_0} \times \mathfrak{d}^{V_0} \rightrightarrows \mathfrak{d}^{V_0} $ is a pair groupoid again. The metric on $\mathcal{E}_{\widehat{\Gamma}}$ is the product of the metrics of all the factors $\mathfrak{d}_v $. In order to make sure the groupoid structures above and the Courant algebroid structure on $\mathcal{E}_{\widehat{\Gamma } } $ determine a CA-groupoid, we have to express them as in the following lemma.
\begin{lem}\label{lem:mulca} The graph of the multiplication on $\mathcal{E}_{\widehat{\Gamma}}$ is a Dirac structure in $\mathcal{E}_{\widehat{\Gamma}} \times \mathcal{E}_{\widehat{\Gamma}} \times \overline{\mathcal{E}_{\widehat{\Gamma}}} $ supported on the graph of the multiplication on $G^{\widehat{E}} \rightrightarrows G^{E'}$. \end{lem} 
\begin{proof} Let us define the multiplication on $G^{E' }\times G^{E'} \rightrightarrows G^{E'}$ as follows:
\[ \mathtt{s}(a,b)=b^{-1},\quad \mathtt{t}(a,b)=a, \quad \mathtt{m}((a,b^{-1}),(b,c))=\mathtt{m}(a,c), \quad \forall a,b\in G^{E'}. \]
Then the unit embedding $\mathtt{u}:G^{E'} \rightarrow G^{E'}\times G^{E'}$ is given by $a\mapsto (a,a^{-1})$ for all $a\in G^{E'}$. The multiplication on $\mathfrak{d}^{V_0} \times \mathfrak{d}^{V_0} \rightrightarrows \mathfrak{d}^{V_0} $ is determined by
\[ \mathtt{m}((W_v\oplus X_v,Y_v\oplus Z_v)_{v\in V_0},(Z_v\oplus Y_v,W'_v\oplus X'_v)_{v\in V_0})=(W_v\oplus X_v,W'_v\oplus X'_v)_{v\in V_0} \]
for all $W_v,W'_v,X_v,X_v',Y_v,Z_v\in \mathfrak{g} $. Now we have that the graph of the multiplication on $\mathcal{E}_{\widehat{\Gamma } }$ is given by: (1) the product of the Lagrangian Lie subalgebra $\mathfrak{L}\hookrightarrow   (\mathfrak{d}^{V_0}\times{\mathfrak{d}^{V_0}} ) \times (\mathfrak{d}^{V_0}\times{\mathfrak{d}^{V_0}}  ) \times \overline{(  \mathfrak{d}^{V_0}\times{\mathfrak{d}^{V_0}})}$
\begin{align*} & \mathfrak{L}=\left\{((W_v\oplus X_v,Y_v\oplus Z_v),(Z_v\oplus Y_v,W'_v\oplus X'_v),(W_v\oplus X_v,W'_v\oplus X'_v))\right\}_{v\in V_0},  \end{align*}  
for all $W_v,W'_v,X_v,X_v',Y_v,Z_v\in \mathfrak{g} $; and (2) the graph $\text{Graph}(\mathtt{m}_{G^{\widehat{E} }} )$ of the multiplication $\mathtt{m}_{G^{\widehat{E} }}$ on $G^{\widehat{E} }$. So we just have to verify that elements in $\mathfrak{L} $ induce tangent vectors to $\text{Graph}(\mathtt{m}_{G^{\widehat{E} }} )$ by means of \eqref{eq:infact}. We will omit this simple verification, see Figure \ref{fig:cagpd}. \end{proof}  
\begin{figure}
    \centering
    \begin{tikzpicture}[line cap=round,line join=round,>=stealth,x=1cm,y=1cm,scale=0.5]
\clip(-12.566694956002456,-2.168078365545015) rectangle (14.116985715454623,8.183749343701505);
\fill[line width=1pt,fill=black,fill opacity=0.0] (-10,6) -- (-10,0) -- (-4,0) -- (-4,6) -- cycle;
\fill[line width=1pt,fill=black,fill opacity=0.0] (-2,6) -- (-2,0) -- (4,0) -- (4,6) -- cycle;
\fill[line width=1pt,fill=black,fill opacity=0.0] (6,6) -- (6,0) -- (12,0) -- (12,6) -- cycle;
\draw [->,line width=1pt] (-10,6)-- (-10,0);
\draw [->,line width=1pt] (-10,0)-- (-4,0);
\draw [->,line width=1pt,dashed] (-7,0)-- (-7,6);
\draw [->,line width=1pt] (-4,0)-- (-4,6);
\draw [->,line width=1pt] (-4,6)-- (-10,6);
\draw [->,line width=1pt] (-2,6)-- (-2,0);
\draw [->,line width=1pt] (-2,0)-- (4,0);
\draw [->,line width=1pt,dashed] (1,0)-- (1,6);
\draw [->,line width=1pt] (4,0)-- (4,6);
\draw [->,line width=1pt] (4,6)-- (-2,6);
\draw [->,line width=1pt] (6,6)-- (6,0);
\draw [->,line width=1pt] (6,0)-- (12,0);
\draw [->,line width=1pt,dashed] (9,0)-- (9,6);
\draw [->,line width=1pt] (12,0)-- (12,6);
\draw [->,line width=1pt] (12,6)-- (6,6);
\begin{scriptsize}
\draw[color=black] (-9.855689436977075,6.4091057169908465) node {$W_v\oplus X_{v}$};
\draw[color=black] (-9.855689436977075,-0.5) node {$W_{v'}\oplus X_{v'}$};
\draw[color=black] (-3,-0.5) node {$Y_{v'}\oplus Z_{v'}\leftrightarrow Z_{v'}\oplus Y_{v'}$};
\draw[color=black] (-3,6.4091057169908465) node {$Y_{v}\oplus Z_{v}\leftrightarrow Z_{v}\oplus Y_{v}$};
\draw[color=black] (-10.22764715850433,3.2288671979328027) node {$a$};
\draw[color=black] (-7.028810753369923,-0.7) node {$h$};
\draw[color=black] (-3.6439954874718867,3.2288671979328027) node {$b$};
\draw[color=black] (-6.935821322988108,6.7) node {$g$};
\draw[color=black] (3.4,-0.5) node {$W'_{v'}\oplus X'_{v'}$};
\draw[color=black] (3.5,6.4091057169908465) node {$W'_{v}\oplus X'_{v}$};
\draw[color=black] (-2.5,3.27) node {$b^{-1}$};
\draw[color=black] (1.0612696898479113,-0.7) node {$h'$};
\draw[color=black] (3.7,3.2288671979328027) node {$c$};
\draw[color=black] (1.0612696898479113,6.7) node {$g'$};
\draw[color=black] (6.8,6.4091057169908465) node {$W_v\oplus X_v$};
\draw[color=black] (6.9,-0.5) node {$W_{v'}\oplus X_{v'}$};
\draw[color=black] (12.145609791360162,-0.5) node {$W'_{v'}\oplus X'_{v'}$};
\draw[color=black] (12.145609791360162,6.4091057169908465) node {$W'_{v}\oplus X'_{v}$};
\draw[color=black] (5.1,3.2288671979328027) node {$\Rightarrow$};
\draw[color=black] (6.3,3.2288671979328027) node {$a$};
\draw[color=black] (9.1,-0.7) node {$h'h$};
\draw[color=black] (12.368784424276516,3.2288671979328027) node {$c$};
\draw[color=black] (9.058360702683931,6.7) node {$gg'$};
\end{scriptsize}
\end{tikzpicture}
    \caption{The CA-groupoid structure on $\mathcal{E}_{\widehat{\Gamma }}$ corresponding to a disk with four marked points $(\Sigma,V)$ and $S$ consisting of one edge}
    \label{fig:cagpd}
\end{figure} 

\begin{rema}\label{rem:muldir} Notice that we can view $\mathcal{L}_{\widehat{\Gamma } }$ as the VB-subgroupoid of $\mathcal{E}_{\widehat{\Gamma}} \rightrightarrows \mathcal{E}_{\widehat{\Gamma}}^0$ given by the product 
\[ \left(\mathcal{L}_{\widehat{\Gamma } } \rightrightarrows \mathcal{L}_{\widehat{\Gamma } }^0\right)= \left(\mathfrak{g}^{ V_0}_\Delta \times \mathfrak{g}^{ V_0}_{\Delta} \rightrightarrows \mathfrak{g}^{ V_0}_\Delta \right) \times \left(  G^{\widehat{E} } \rightrightarrows  G^{E'}\right) \hookrightarrow \left( \mathfrak{d}^{ V_0} \times \mathfrak{d}^{ V_0} \rightrightarrows \mathfrak{d}^{ V_0}\right) \times \left(  G^{\widehat{E} } \rightrightarrows  G^{E'}\right)\cong \mathcal{E}_{\widehat{\Gamma}}, \] 
we have thus described the multiplicative Manin pair structure on $(\mathcal{E}_{\widehat{\Gamma}} ,\mathcal{L}_{\widehat{\Gamma }})$ over $G^{\widehat{E} }$. Similarly, if $(\widehat{H},\widehat{\mathcal{A}}) $ are symmetric boundary data as in \eqref{eq:symdec} for $(\widehat{\Sigma},\widehat{V} )$, we have that $\widehat{ H} \hookrightarrow G^{\widehat{E} }$ is a Lie subgroupoid
\[ \left(\widehat{H}\rightrightarrows \widehat{H}_0\right)  := \left(\left( \prod_{e\in \widehat{E}'' } (H_{j_1(e)=j_2(e)} \rightrightarrows \{1\} )\right)\times\left( \prod_{e\in E'}( H_{j_1(e)} \times H_{j_2(e)} \rightrightarrows H_e) \right)\right) \hookrightarrow \left(  G^{\widehat{E} } \rightrightarrows  G^{E'}\right); \]
and $\widehat{\mathcal{A}}$ is the multiplicative Dirac structure $\widehat{\mathcal{A} }\rightrightarrows \widehat{\mathcal{A} }_0$ given by the product
\[  \left(\left(\prod_{v\in V_0}\left(\mathfrak{a}_v \times { \mathfrak{a}^\vee_v} \rightrightarrows \mathfrak{a}_v \right) \right)\times   \left(  G^{\widehat{E} } \rightrightarrows  G^{E'}\right)\right) \hookrightarrow \left( \mathfrak{d}^{ V_0} \times \mathfrak{d}^{ V_0} \rightrightarrows \mathfrak{d}^{ V_0}\right) \times \left(  G^{\widehat{E} } \rightrightarrows  G^{E'}\right)\cong \mathcal{E}_{\widehat{\Gamma}}. \] 
\end{rema} 
\begin{prop}\label{pro:paigpdmulmp} The morphism of multiplicative Manin pairs 
\[ (R_{\Sigma,V},R_{\Sigma,V}):(\mathbb{T}M \times\overline{\mathbb{T}  M} , T M \times TM) \rightarrow  ( \mathcal{E}_\Gamma\times \overline{\mathcal{E}_\Gamma},\mathcal{L}_\Gamma \times \mathcal{L}_\Gamma )    \]  
over the morphism of pair groupoids 
\[ (\mu,\mu):(M \times M \rightrightarrows M) \rightarrow (G^E \times G^E \rightrightarrows G^E). \] determines via Courant reduction a structure of morphism of multiplicative Manin pairs on 
\[ R_{\widehat{\Sigma},\widehat{V}  }: (\mathbb{T}\hom (\Pi_1(\widehat{\Sigma},\widehat{V} ),G),T\hom (\Pi_1(\widehat{\Sigma},\widehat{V} ),G)) \rightarrow (\mathcal{E}_{\widehat{\Gamma}} ,\mathcal{L}_{\widehat{\Gamma }}). \]

 \end{prop} 
\begin{proof} Note that $(R_{\Sigma,V},R_{\Sigma,V})$ is nothing but $R_{{\Sigma}\coprod \overline{\Sigma},V\coprod V}$, where $\overline{\Sigma} $ denotes $\Sigma$ with the opposite orientation. The Courant reduction this Proposition refers to is the so called quasi-Hamiltonian reduction described in \cite[Thm. 3.1, Thm. 3.2]{quisur2} which we briefly summarize in \S \ref{subsec:redcou}. This construction relies on specifying:
\begin{itemize} \item a coisotropic Lie subalgebra $\mathfrak{c} \hookrightarrow (\mathfrak{d}^V \times \mathfrak{d}^V)$ and 
\item a submanifold $N \hookrightarrow G^E \times G^E$ \end{itemize}
such that $(\mathfrak{c} ,N)$ form reductive data in the sense of \cite[Def. 3.2]{quisur2}. In order to make the output of this reduction process more evident, we will apply it in two steps (but see \S \ref{subsec:poigro} for how to combine them):
\begin{itemize} \item our first step is given by using reductive data coming from the sewing construction of \cite[\S 4.1]{quisur2}, geometrically, this amounts to constructing $\widehat{\Sigma} $ by gluing two copies of $\Sigma$ along $S$;
\item the second step is deleting from $\widehat{\Sigma} $ the vertices adjacent to edges in $S$. \end{itemize}  
{\em Step (1)}. For every $e\in S_0$ (i.e. $\mathtt{T}( e)\neq \mathtt{S}(e) $), let us define the following coisotropic Lie subalgebras: $ \mathfrak{c}_{\mathtt{T}( e)}\subset \mathfrak{d}_{\mathtt{T}(e)}  \times\overline{\mathfrak{d}_{\mathtt{T}(e)} }  $ and $\mathfrak{c}_{\mathtt{S}(e)} \subset   \mathfrak{d}_{\mathtt{S}(e)} \times\overline{\mathfrak{d}_{\mathtt{S}(e)} }$:
\begin{align*} \mathfrak{c}_{\mathtt{T}( e)} &=\{( X\oplus Y,Z\oplus Y, ) \in \mathfrak{d}^2 |X,Y,Z\in \mathfrak{g}     \}, \\ 
\mathfrak{c}_{\mathtt{S}( e)} &=\{(X\oplus Y, X\oplus Z ) \in \mathfrak{d}^2 |X,Y,Z\in \mathfrak{g}     \}; \end{align*}   
on the other hand, if $v=\mathtt{S}(e)=\mathtt{T}(e)$, let us put $\mathfrak{c}_{v}=\{(X\oplus Y,X\oplus Y)|X,Y\in \mathfrak{g} \}\hookrightarrow  \mathfrak{d}_v \times\overline{\mathfrak{d}_v }  $. With this setting, we can define 
\begin{align}  &\mathfrak{c} = \left(\prod_{v\in V_0} (\mathfrak{d}_v \times \overline{\mathfrak{d}_v}) \times \prod_{v\in V-V_0} \mathfrak{c}_v\right) \hookrightarrow \left(\prod_{v\in V_0} ( \mathfrak{d}_v\times\overline{\mathfrak{d}_v }) \times \prod_{v\in V-V_0} ( \mathfrak{d}_v\times\overline{\mathfrak{d}_v} ) \right)= (\mathfrak{d}^V\times\overline{\mathfrak{d}^V } ); \label{eq:coured1} \\
&N=\left(\prod_{e\in E-S} (G_e \times G_e) \times \prod_{e\in S } G_\Delta\right) \hookrightarrow \left(\prod_{e\in E-S} (G_e \times G_e) \times \prod_{e\in S} (G_e \times G_e) \right)= (G^E \times G^E). \label{eq:coured2}  \end{align}  
This choice of reductive data is precisely what is necessary to identify two copies of $\Sigma$ along $S$ and thus to produce the canonical Manin morphism corresponding to the surface $(\widehat{\Sigma},V\cup_S V)$, where $V\cup_S V\subset \partial\widehat{\Sigma} $ is the vertex set which is obtained by taking $i_1(V)\cup i_2(V) \hookrightarrow \partial\widehat{\Sigma} $ and removing the images of those vertices which come from loops in $S$, see \cite[\S 4.1]{quisur2}. Note that $\left(\mathfrak{c}^\perp \cap (\mathfrak{g}_\Delta^V \times \mathfrak{g}_\Delta^V)\right)\cong \mathfrak{g}^{U} $, where $U\subset V$ is the set of vertices that are adjacent to loops in $S$. As a consequence of \cite[Thm. 4.1]{quisur2}, the quotient $\mathcal{M}$ of the $\left(\mathfrak{c}^\perp \cap (\mathfrak{g}_\Delta^V \times \mathfrak{g}_\Delta^V)\right)$-action on $(\mu,\mu)^{-1}(N) $ is identified with $\hom(\Pi_1(\widehat{\Sigma},V\cup_S V),G)$. We will denote by $\Gamma'=(E\cup_S E,V\cup_S V)$ the boundary graph of $(\widehat{\Sigma},V\cup_S V)$ and by $\iota:(\mu,\mu)^{-1}(N) \hookrightarrow M$ the inclusion.  

Note that both $\mathfrak{c}$ and $N$ are Lie subgroupoids of the corresponding pair groupoids. This implies that the reduced Manin pair morphism associated with $(\mathfrak{c},N)$ is multiplicative again. In fact, let us start by observing that the quotient map $q:(\mu,\mu)^{-1}(N) \rightarrow \mathcal{M} $ is given by taking the orbits of the gauge action \eqref{eq:gauact} restricted to $G^{U}_\Delta \hookrightarrow  G^{U} \times G^{U} \hookrightarrow G^V \times G^V$ and this is an action by groupoid automorphisms. So $\mathcal{M} $ inherits a groupoid structure from $M \times M \rightrightarrows M$. According to \cite[Thm. 4.1]{quisur2}, the Manin pair morphism obtained by reducing $R_{{\Sigma}\coprod \overline{\Sigma},V\coprod V}$ along $(\mathfrak{c} ,N)$ is determined by the following diagram
\[ \begin{tikzcd}[column sep=4.5ex,row sep=2.75ex]
{( \mathbb{T}M \times\overline{\mathbb{T}  M}, T M \times TM)} \arrow[rrr, "{R_{\Sigma\coprod\overline{\Sigma} ,V\coprod V}}"] &  &  & {(\mathcal{E}_\Gamma \times\overline{\mathcal{E}_\Gamma} ,\mathcal{L}_\Gamma \times \mathcal{L}_\Gamma )} \arrow[dd, "R_\chi"] \\
{(\mathbb{T}(\mu,\mu)^{-1}(N),T (\mu,\mu)^{-1}(N))}  \arrow[u, "R_{\iota}"]                                         &  &  &                                                                                                                                \\
{(\mathbb{T}\mathcal{M},T\mathcal{M})}\arrow[u, "R_q^\top"] \arrow[rrr, "{R_{\widehat{\Sigma},V\cup_S V}}"']                                               &  &  & {(\mathcal{E}_{\Gamma'},\mathcal{L}_{\Gamma'});}                                                                               
\end{tikzcd}\] 
where the vertical arrows in the left hand side are the canonical relations over the corresponding maps and $R_\chi$ is given by the product $R_{\mathfrak{c} } \times R_{N,\mathfrak{c} }$ of relations given as follows. The Courant morphism $R_{\mathfrak{c} }:(\mathfrak{d}^V \times \mathfrak{d}^V) \rightarrow \mathfrak{c}/\mathfrak{c}^\perp =\mathfrak{d}^{V\cup_S V} $ is the one induced by the coisotropic reduction of $\mathfrak{c}$ as in \S \ref{subsec:redcou}:
\begin{align*} &(X_v\oplus Y_v,X_v\oplus Y_v) \sim_{R_{\mathfrak{c} }} 0,  \quad \text{if $v=\mathtt{T}(e)=\mathtt{S}(e) $ for $e\in S-S_0$}\\
&(X_v\oplus Y_v,Z_v\oplus Y_v) \sim_{R_{\mathfrak{c} }} X_v\oplus Z_v\in \mathfrak{d}_{i_1(v)},  \quad \text{if $v=\mathtt{T}(e)$ for $e\in S_0$}\\
&(X_v\oplus Y_v,X_v\oplus Z_v) \sim_{R_{\mathfrak{c} }} Z_v\oplus Y_v\in \mathfrak{d}_{i_1(v)}, \quad \text{if $v=\mathtt{S}(e)$ for $e\in S_0$} \\
&(X_v\oplus Y_v,W_v\oplus Z_v) \sim_{R_{\mathfrak{c} }} (X_v\oplus Y_v,Z_v\oplus W_v)\in \mathfrak{d}_{i_1(v)} \times \mathfrak{d}_{i_2(v)}, \quad \text{if $v\in V_0$. }
 \end{align*}  
On the other hand, the relation $R_{N, \mathfrak{c} }: G^E \times G^E \dashrightarrow G^{E\cup_S E}$ is obtained by composing the transpose of the inclusion $N \hookrightarrow G^E \times G^E$ and the quotient map $N \rightarrow N/\mathfrak{c}^\perp \cong G^{E\cup_S E}$. Since the bottom arrow in the diagram above is the composite of multiplicative Courant relations $R_{\widehat{\Sigma},V\cup_S V}=R\chi\circ R_{ \Sigma\coprod\overline{\Sigma},V\coprod V}\circ R_\iota\circ R_q^\top$, we have that $R_{\widehat{\Sigma},V\cup_S V}$ is multiplicative as well. Explicitly, $\mathcal{E}_{\Gamma'}$ inherits the following structure of a product of groupoids 
\begin{align}   \left(\prod_{v\in V_0} (\mathfrak{d}_{i_1(v)} \times {\mathfrak{d}_{i_2(v)}} \rightrightarrows \mathfrak{d}_v) \times \prod_{v\in W} (\mathfrak{d}_v \rightrightarrows  \mathfrak{g}_\Delta) \right) \times \left(\prod_{e\in E-S} G_{j_1(e)} \times G_{j_2(e)} \rightrightarrows G_e\right); \label{eq:caredsur} \end{align}  
where $W=V\cup_S V -i_1(V_0)-i_2(V_0)$ and $\mathfrak{g}_\Delta \hookrightarrow \mathfrak{d} $ is the diagonal and all the factors are pair groupoids with $\mathfrak{d}_v \rightrightarrows  \mathfrak{g}_\Delta$ being isomorphic to the pair groupoid over $\mathfrak{g} $. The structure maps of these pair groupoids have to be defined as in the proof of Lemma \ref{lem:mulca} to make them compatible with the Courant algebroid structure on $\mathcal{E}_{\Gamma'}$ (thus determining a CA-groupoid). We omit the details since the approach is the same. It is straightforward to verify that $R_\chi$ is a multiplicative Courant morphism with respect to this groupoid structure.
  
{\em Step (2)}. In order to produce $R_{\widehat{\Sigma},\widehat{V}  }$ from $R_{\widehat{\Sigma},V\cup_S V }$ we have to forget the vertices induced by edges in $S$. This can be accomplished by introducing reductive data in $(\mathcal{E}_{\Gamma'},\mathcal{L}_{\Gamma'})$ corresponding to $R_{\widehat{\Sigma},V\cup_S V}$:
\begin{align} &\mathfrak{c}'= \left(\prod_{v\in V_0} \left(\mathfrak{d}_{i_1(v)} \times {\mathfrak{d}_{i_2(v)}}\right) \times \prod_{v\in W} \mathfrak{g}_\Delta \right) \hookrightarrow   \mathfrak{d}^{V\cup_S V}, \quad N'=G^{E\cup_S E}.\label{eq:coured3} \end{align}  
We claim that the Manin pair morphism obtained from $R_{\widehat{\Sigma},V\cup_S V}$ by applying \cite[Thm. 3.1]{quisur2} to the reduction data $(\mathfrak{c}',N')$ is canonically isomorphic to $R_{\widehat{\Sigma},\widehat{V}  }$. In other words, we have to verify that $R_{\widehat{\Sigma},\widehat{V}  }$ can be identified with $\left(R_{\widehat{\Sigma},V\cup_S V}\right)_{\mathfrak{c}',N' }:=R_\zeta\circ R_{\widehat{\Sigma},V\cup_S V  }  \circ R_\Psi^\top$ as in the diagram below
\[ \begin{tikzcd}[column sep=2ex,row sep=2.5ex]
{(\mathbb{T}\mathcal{M},T\mathcal{M})} \arrow[rrr, "{R_{\widehat{\Sigma},V\cup_S V}}"]                                                                                                    &  &  & {(\mathcal{E}_{\Gamma'},\mathcal{L}_{\Gamma'})} \arrow[dd, "R_\zeta"] \\
                                                                                                                                                                                          &  &  &                                                                       \\
{(\mathbb{T}\hom (\Pi_1(\widehat{\Sigma},\widehat{V} ),G),T\hom (\Pi_1(\widehat{\Sigma},\widehat{V} ),G))} \arrow[rrr, "{R_{\widehat{\Sigma},\widehat{V}  }}"'] \arrow[uu, "R_\Psi^\top"] &  &  & {(\mathcal{E}_{\widehat{\Gamma}} ,\mathcal{L}_{\widehat{\Gamma }});}  
\end{tikzcd}\]
where the Courant relations above are the following. Firstly, $\Psi: \mathcal{M}=\hom (\Pi_1(\widehat{\Sigma},V\cup_S V ),G) \rightarrow \hom (\Pi_1(\widehat{\Sigma},\widehat{V} ),G) $ is the restriction map (equivalently, the quotient projection of $\mathcal{M}$ divided by the gauge action of $G^W$) and, secondly, $R_\zeta$ is the product of the quotient Courant morphism $\mathfrak{d}^{V\cup_S V} \rightarrow \mathfrak{c}'/\left( \mathfrak{c}'\right)^\perp \cong  \mathfrak{d}^{V_0} \times {\mathfrak{d}^{V_0} } \cong \mathfrak{d}^{\widehat{V} }$ and the graph of the quotient map $G^{E\cup_S E}\rightarrow G^{E\cup_S E}/\left( \mathfrak{c}'\right)^\perp\cong G^{\widehat{E} }$ as in \S \ref{subsec:redcou}. Note that $R_\zeta$ is a multiplicative Courant morphism with respect to the CA-groupoid structures of \eqref{eq:caredsur} and of Lemma \ref{lem:mulca}. It follows that $\left(R_{\widehat{\Sigma},V\cup_S V}\right)_{\mathfrak{c}',N' }$ is a morphism of multiplicative Manin pairs as well. At the level of smooth maps, it is not difficult to check that $\left(R_{\widehat{\Sigma},V\cup_S V}\right)_{\mathfrak{c}',N' }$ is supported on the graph of the moment map $\widehat{\mu} $. The claim about the morphisms of Manin pairs follows from the discussion in \S \ref{subsec:forver}, specifically from \eqref{eq:forver} which relates forgetting vertices with quasi-Poisson reduction as in \cite[Corollary 2]{quisur}.   \end{proof}

\begin{rema} We can replace the reductive data \eqref{eq:coured3} in Step (2) of the proof above by reductive data which correspond to forgetting some but not necessarily all of the points in $W \hookrightarrow V\cup_S V$ from $\widehat{\Sigma}$. By a similar argument, we can show that the resulting morphism of Manin pairs is also multiplicative in such a situation. \end{rema}
  
\begin{proof}[Proof of Theorem \ref{thm:poigro}] Note that $R_{\widehat{\Sigma},\widehat{V}  }$ can be equipped with the structure of a morphism of multiplicative Manin pairs as in Proposition \ref{pro:paigpdmulmp}. Since $\widehat{\mathcal{A} } \hookrightarrow \mathcal{E}_{\widehat{\Gamma} } $ is a multiplicative Dirac structure and $\widehat{H}  \hookrightarrow G^{\widehat{E} }$ is a Lie subgroupoid as observed in Remark \ref{rem:muldir}, we can apply Proposition \ref{pro:mulmp} to conclude the result. \end{proof}
\begin{rema} Thanks to Proposition \ref{pro:paigpdmulmp}, we can replace $\widehat{H} $ in Theorem \ref{thm:poigro} by any $\widehat{\mathcal{A} }$-invariant subgroupoid of $G^{\widehat{E} } \rightrightarrows G^{E'}$. We will see an important example of this in \S \ref{subsec:symfol}. \end{rema}

\subsubsection{Direct description of the Lie groupoid structure on $\hom(\Pi_1(\widehat{\Sigma},\widehat{V}),G)$} The gluing and vertex deletion of Steps (1) and (2) in the proof of Proposition \ref{pro:paigpdmulmp} amount to taking $N$ given by \eqref{eq:coured2} and then the quotient
\[ (\mu,\mu)^{-1}(N) / G^{V-V_0}_\Delta \]
determined by the gauge action \eqref{eq:gauact} on $(\mu,\mu)^{-1}(N)$ restricted to 
\[ G^{V-V_0}_\Delta  \hookrightarrow  G^{V-V_0} \times G^{V-V_0}\hookrightarrow G^V \times G^V. \]
In other words, the action induced by forgeting each vertex in $V-V_0$ is a (diagonal) action by automorphisms of a copy of $G$ on $M \times M \rightrightarrows M$. In this regard, let us note that the action map corresponding to the action \eqref{eq:gauact} of $G^V \times G^V \rightrightarrows G^V$ on $M\times M \rightrightarrows M$ is a Lie groupoid morphism with respect to the product of the pair groupoid structures. This gives a direct construction of the Lie groupoid structure on $(\mu,\mu)^{-1}(N) / G^{V-V_0}_\Delta $. 

We can combine the reductive data $(\mathfrak{c} ,N)$ and $(\mathfrak{c}',N')$ of \eqref{eq:coured1}, \eqref{eq:coured2} and \eqref{eq:coured3} into 
\begin{align}  &\mathfrak{p} = \left(\prod_{v\in V_0} (\mathfrak{d}_v \times \overline{\mathfrak{d}_v}) \times \prod_{v\in V-V_0} \mathfrak{d}_\Delta  \right) \hookrightarrow \left(\prod_{v\in V_0} ( \mathfrak{d}_v\times\overline{\mathfrak{d}_v }) \times \prod_{v\in V-V_0} ( \mathfrak{d}_v\times\overline{\mathfrak{d}_v} ) \right)= (\mathfrak{d}^V\times\overline{\mathfrak{d}^V } ); \label{eq:reddat} \end{align} 
together with $N \hookrightarrow G^E \times G^E$ as in \eqref{eq:coured2}. By applying Theorem \ref{thm:coured1} to $R_{\Sigma,V} \times R_{\Sigma,V}$ with the reductive data $(\mathfrak{p},N)$ we have to reduce $(\mu,\mu)^{-1}(N)$ by the action of $\mathfrak{p}^\perp \cap \mathfrak{g}_\Delta^V$ so we get the quotient above. As explained in the proof of Proposition \ref{pro:paigpdmulmp}, we can identify $(\mu,\mu)^{-1}(N) / G^{V-V_0}_\Delta  $ with $\hom(\Pi_1(\widehat{\Sigma},\widehat{V}),G)$ equipped with the moment map $\widehat{\mu}: \hom(\Pi_1(\widehat{\Sigma},\widehat{V}),G) \rightarrow G^{\widehat{E} }$. So the base manifold of the groupoid structure on $\hom(\Pi_1(\widehat{\Sigma},\widehat{V}),G)$ is $M/G^{V-V_0}$ which is the quotient obtained by restricting the action \eqref{eq:gauact} to $G^{V-V_0}\subset G^V$ and hence $M/G^{V-V_0}$ can be identified with $\hom(\Pi_1({\Sigma},{V}_0),G)$. We get then a Lie groupoid 
\begin{align} \hom(\Pi_1(\widehat{\Sigma},\widehat{V}),G) \rightrightarrows \hom(\Pi_1({\Sigma},{V}_0),G). \label{eq:gromodspa} \end{align}
By construction, there is a geometric interpretation of the structure maps of this groupoid: its target and source maps are induced by pulling back a flat framed principal $G$-bundle on $(\widehat{\Sigma},\widehat{V}) $ (i.e. equipped with a trivialization over $ \widehat{V}$) using the inclusions $i_a:(\Sigma,V_0) \hookrightarrow (\widehat{\Sigma},\widehat{V})$ for $a=1,2$ respectively; in our convention, $i_2$ is distinguished because it is orientation reversing. So two flat framed $G$-bundles $P,P'$ over $(\widehat{\Sigma}, \widehat{V}) $ are composable if their respective pullbacks $i_2^*P$ and $ i_1^*P'$ are isomorphic as flat framed $G$-bundles over $(\Sigma,V_0)$ (in the natural sense). Then we can glue $P$ and $P'$ to form a flat framed $G$-bundle $P\circ P'$ over $(\widehat{\Sigma},\widehat{V}) $ such that $i_1^*(P\circ P')\cong i_1^*P$ and $i_2^*(P\circ P')\cong i_2^*P'$. This operation corresponds to the multiplication. 

\subsubsection{Direct construction of the groupoid structure on $\mathfrak{M}_G(\widehat{\Sigma},\widehat{V})_{\widehat{H},\widehat{\mathcal{A}}} $  } For future reference, it is convenient to construct $\mathfrak{M}_G(\widehat{\Sigma},\widehat{V})_{\widehat{H},\widehat{\mathcal{A}}} $ directly from the pair groupoid $M \times M \rightrightarrows M$. 

Let us promote $\widehat{H} $ to a suitable Lie subgroupoid $\mathcal{H}\subset G^E \times G^E$. We will keep denoting by $G_e$ the copy of $G$ inside $G^E$ which corresponds to the edge $e\in E$. Let $\mathcal{H}\subset G^E \times G^E$ be the product of the following Lie subgroupoids $\mathcal{H}_e \hookrightarrow G_e \times G_e \rightrightarrows G_e$: 
\begin{align} \mathcal{H}_e =\begin{cases} H_e \times H_e \rightrightarrows H_e, \quad \text{if $e$ is not adjacent to an edge in $S$}  \\
G_\Delta  \hookrightarrow G_e \times G_e \rightrightarrows G_e, \quad  \text{if $e\in S$} \\
H_e \times G_e \hookrightarrow G_e \times G_e \rightrightarrows G_e, \quad \text{if $e\in E-S$ but $\mathtt{S}( e)\in V-V_0$} \\
G_e \times H_e \hookrightarrow G_e \times G_e \rightrightarrows G_e, \quad \text{if $e\in E-S$ but $\mathtt{T}( e)\in V-V_0$}.
  \end{cases} \label{eq:holgro}\end{align}   
The groupoid $H_e \times G_e \rightrightarrows G_e$ in \eqref{eq:holgro} denotes the action groupoid associated to the action of $H_e$ on $G_e$ by left translation: its multiplication is defined by $\mathtt{m}((a,bg),(b,g))=(ab,g)$ for all $a,b\in H_e$ and all $g\in G_e$. It is included in the pair groupoid $G_e \times G_e \rightrightarrows G_e$ by means of $(h,g)\mapsto (hg,g)$. Similarly, $G_e \times H_e$ denotes the action groupoid $G_e \times H_e \rightrightarrows G_e$ with multiplication $\mathtt{m}((g,a),(ga,b))=(g,ab)$ for all $a,b\in H_e$ and all $g\in G_e$; this groupoid is included in $G_e \times G_e \rightrightarrows G_e$ via the map $(g,h)\mapsto (g,gh)$ for all $g\in G_e$ and all $h\in H_e$. So $\mathcal{H}$ is a Lie subgroupoid of $G^E \times G^E \rightrightarrows G^E$.
 
Finally, let us denote by $G_v$ the copy of $G$ inside $G^V$ which corresponds to the vertex $v\in V$. Let $K_v \hookrightarrow G_v$ be the connected subgroup integrating $\mathfrak{a}_v\cap \mathfrak{g}_\Delta \hookrightarrow \mathfrak{g}_\Delta $ and let us denote $\mathcal{K}=\prod_{v\in V} \mathcal{K}_v $, where

\[ \mathcal{K}_v =\begin{cases} K_v \times K_v \rightrightarrows K_v, \quad \text{if $v\in V_0$},  \\
G_\Delta  \hookrightarrow G_v \times G_v \rightrightarrows G_v, \quad  \text{if $v\in V-V_0$}. \end{cases} \] 
\begin{prop}\label{pro:quoid} The reduction of morphisms between Manin pairs as in \S \ref{subsec:redcou} induced by the reductive data \eqref{eq:reddat} determines a diffeomorphism $ (\mu,\mu)^{-1}(\mathcal{H} )/ \mathcal{K}  \cong \mathfrak{M}_G(\widehat{\Sigma},\widehat{V})_{\widehat{H},\widehat{\mathcal{A}  }} $. \end{prop}  

\begin{proof} Let $Q$ be the relation $G^E \times G^E \dashrightarrow G^{\widehat{E} }$ given by taking the leaf space of the foliation on $N$ as in \eqref{eq:coured2} induced by the action of $\mathfrak{p}^\perp$, see \S \ref{subsec:redcou}. Let us write $(a_e,b_e)_{e\in E}\in G^E \times G^E$, where $a_e,b_e\in G_e$. We can define $Q$ in terms of its components as follows: $Q$ is the graph of the map on $N$ that forgets the components which correspond to edges in $S$ and, for all the others, it satisfies
\[ (a_e,b_e)\sim_Q\begin{cases} (a_e,b_e^{-1}), \quad  \text{if $e$ is not adjacent to an edge in $S$}  \\ a_eb_e^{-1} 
, \quad \text{if $\mathtt{S}( e)\in V-V_0$} \\ b_e^{-1}a_e 
, \quad \text{if $\mathtt{T}( e)\in V-V_0$}; 
  \end{cases} \] 
where $\sim_Q$ stands for being related via $Q$, see Figure \ref{fig:glu2sur}. We have the following commutative diagram
\[ \begin{tikzcd}[column sep=4ex,row sep=2.75ex]
M \times M \arrow[rrr, "{(\mu,\mu)}"]                                       &  &  & G^E \times G^E \arrow[dd, "Q", dashed] \\
{(\mu,\mu)^{-1}(\mathcal{H})} \arrow[d, "p"'] \arrow[u, "i"]                &  &  &                                        \\
{\hom(\Pi_1(\widehat{\Sigma},\widehat{V}),G)} \arrow[rrr, "\widehat{\mu}"'] &  &  & G^{\widehat{E}}                       
\end{tikzcd} \]
where $p$ is the restriction of the quotient map $(\mu,\mu)^{-1}(N) \rightarrow  (\mu,\mu)^{-1}(N) / G^{V-V_0}_\Delta $ to $(\mu,\mu)^{-1}(\mathcal{H} )$ and $i$ is the inclusion. Using this description, it is straightforward to verify that $ (\mu,\mu)^{-1}(\mathcal{H} )/ \mathcal{K}  =\widehat{\mu}^{-1}(\widehat{H})/( \mathcal{L}_{\widehat{\Gamma } }  \cap \widehat{\mathcal{A} })$. \end{proof}

\subsubsection{The symplectic leaves of $ \mathfrak{M}_G(\widehat{\Sigma},\widehat{V})_{\widehat{H},\widehat{\mathcal{A}  }}$}\label{subsec:symfol} Given a symmetric decoration $(\widehat{H} ,\widehat{\mathcal{A}})$ of a surface $(\widehat{\Sigma},\widehat{V})$ obtained from a marked surface $(\Sigma,V)$ as in Theorem \ref{thm:poigro}, the symplectic leaves of the Poisson groupoid $ \mathfrak{M}_G(\widehat{\Sigma},\widehat{V})_{\widehat{H},\widehat{\mathcal{A}  }}$ are obtained as follows. According to (the proof of) Proposition \ref{pro:mulmp}, these symplectic leaves are the connected components of the subquotients $\widehat{\mu}^{-1}(\mathcal{O} )/(\mathcal{L}_{\widehat{\Gamma } }\cap \widehat{\mathcal{A}  })$ for all the $\widehat{\mathcal{A}  }$-orbits $\mathcal{O} \hookrightarrow H$. On the other hand, if such an $\widehat{\mathcal{A}  }$-orbit $\mathcal{O} $ happens to be a subgroupoid of $G^{\widehat{E} }$, item (3) of Proposition \ref{pro:mulmp} tells us that $\widehat{\mu}^{-1}(\mathcal{O} )/(\mathcal{L}_{\widehat{\Gamma } }\cap \widehat{\mathcal{A}  })$ is a symplectic groupoid itself. We will see in item (2) of Proposition \ref{pro:intla} that all the $\widehat{\mathcal{A}}$-orbits that pass through the unit submanifold $G^{E'} \hookrightarrow G^{\widehat{E} }$ are subgroupoids and so we get the following result.
\begin{prop}\label{thm:symfol} We have that $ \widehat{\mu}^{-1}(\mathcal{O} )/(\mathcal{L}_{\widehat{\Gamma }} \cap \widehat{\mathcal{A} }) \hookrightarrow \mathfrak{M}_G(\widehat{\Sigma},\widehat{V})_{\widehat{H},\widehat{\mathcal{A} }} $ is a symplectic subgroupoid whenever $\mathcal{O} \hookrightarrow \widehat{H}  \hookrightarrow  G^{\widehat{E} }$ is an $\widehat{\mathcal{A} }$-orbit that passes through the unit submanifold $G^{E'} \hookrightarrow G^{\widehat{E} }$. \qed \end{prop}
On a related note, let us remark that any symplectic leaf that passes through the unit submanifold of a Poisson groupoid contains a symplectic subgroupoid \cite[Prop. 12]{folpoigro}. 

\begin{figure}
\begin{center}  
\begin{tikzpicture}[line cap=round,line join=round,>=stealth,x=1cm,y=1cm,scale=0.7]
\clip(-1.0615388628632676,-2.794238184730122) rectangle (7.696752181111826,5.646564808900906);
\draw [->,line width=0.8pt] (3.240971612489498,4.803579295918301) -- (5.879406789486994,3.281826227027622);
\draw [->,line width=0.8pt] (5.879406789486994,3.281826227027622) -- (3.372345978149124,1.3112107421332186);
\draw [->,line width=0.8pt] (3.240971612489498,4.803579295918301) -- (0.4383184784174672,3.10666040614812);
\draw [->,line width=0.8pt] (0.4383184784174673,3.10666040614812) -- (3.372345978149124,1.3112107421332186);
\draw [line width=0.8pt,dashed] (3.240971612489498,4.803579295918301)-- (3.372345978149124,1.3112107421332186);
\draw [<-,line width=0.8pt] (3.394241705759062,0.9499312365692447) -- (6.437747843540406,-0.34191669241708644);
\draw [<-,line width=0.8pt] (6.437747843540406,-0.3419166924170864) -- (3.635094709468377,-2.2358971306767073);
\draw [<-,line width=0.8pt] (3.635094709468377,-2.2358971306767073) -- (0.10988256426840115,-0.4623431942717444);
\draw [<-,line width=0.8pt] (0.10988256426840123,-0.46234319427174436) -- (3.394241705759062,0.9499312365692447);
\begin{scriptsize}

\draw[color=black] (3.3285545229292484,5.038958367725132) node {$v_3$};

\draw[color=black] (2.8,1.3714239930605476) node {$v_2$};

\draw[color=black] (5.966989699926746,3.5172052988344538) node {$v_1$};
\draw[color=black] (4.67514177094042,4.294503628987246) node {$b_1$};
\draw[color=black] (4.984620408990108,2.258200961263029) node {$b_2$};

\draw[color=black] (0.2631526575379654,3.35298734175992) node {$v_1$};
\draw[color=black] (1.7630099988187005,4.26166003757234) node {$a_1$};
\draw[color=black] (1.7630099988187005,2.126826595603402) node {$a_2$};
\draw[color=black] (3.1752844296596843,2.1778215208804172) node {$a_3=b_3$};

\draw[color=black] (3.7818246161988124,1.085310308376076) node {$v_3$};

\draw[color=black] (6.525330753980158,-0.10653762061025485) node {$v_1$};
\draw[color=black] (5.0035776850894855,0.616021390517693) node {$b_1^{-1}$};

\draw[color=black] (4.1,-2.208527471164285) node {$v_2$};
\draw[color=black] (5.6,-1.3108026391568348) node {$b_2^{-1}$};

\draw[color=black] (0.19746547470815218,-0.22696412246491282) node {$v_1$};
\draw[color=black] (1.411142712087628,-1.4312291410114928) node {$a_2$};
\draw[color=black] (1.620687769354105,0.5393863438829107) node {$a_1$};

\draw[color=black] (2.2447160062373306,3.4) node {$\Sigma$};

\draw[color=black] (4.423340903426135,3.4) node {$\Sigma$};

\draw[color=black] (3.525616071418688,-0.41601625865994395) node {$\widehat{\Sigma}$};
\end{scriptsize}
\end{tikzpicture}
\caption{The gluing of $\widehat{\Sigma}=\Sigma\cup_S \Sigma$}\label{fig:glu2sur} 
\end{center} 
\end{figure}
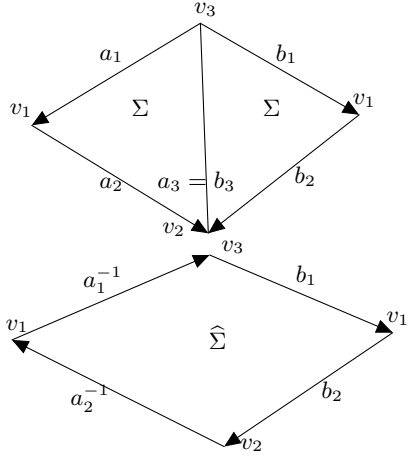 

\subsection{Quasi-Poisson structures on spaces of representations} Let us review an alternative description of quasi-Hamiltonian spaces for Manin pairs that is useful for obtaining explicit formulae for the Poisson structures on the Poisson quotients.
\subsubsection{Quasi-Poisson manifolds and quasi-Poisson groupoids}\label{subsec:quapoi} A {\em Lie quasi-bialgebroid} \cite{roycou} is a Lie algebroid $L$ over $M$ endowed with a degree one derivation $\delta : \Gamma (\wedge^k L) \rightarrow \Gamma ( \wedge^{k+1} L)$ for all $k\in \mathbb{N} $ which is a derivation of the bracket on $L$, 
\[ \delta([u,v])=[\delta(u),v]+(-1)^{p-1}[u,\delta(v)]\] 
for all $u\in \Gamma(\wedge^p L) $ and $v\in \Gamma(\wedge^\bullet L) $ which satisfies $\delta^2(\cdot)=[\chi,\cdot]$, where $\chi\in \Gamma (\wedge^3 L)$ is such that $\delta(\chi)=0$. Since $\delta$ is a derivation, it is determined by its restriction to elements of degree zero and degree one in $\Gamma (\wedge^\bullet L)$, where it is given, respectively, by a vector bundle map $\mathtt{a}_*:L^* \rightarrow TM$ and a map $\Gamma ( L) \rightarrow \Gamma ( \wedge^{2} L)$ called the {\em cobracket}. For example, a {\em Lie bialgebroid} $(L,L^*)$ is a Lie quasi-bialgebroid $(L,\delta,\chi)$ in which $\chi=0$ and hence the differential $\delta$ satisfies $\delta^2=0$ \cite{macxu}. Since the dual of a differential which squares to zero is a Lie bracket, a Lie bialgebroid consists of a pair of Lie algebroid structures on $L$ and $L^*$ which are compatible in a suitable sense. A {\em Lie bialgebra} $(\mathfrak{g},\mathfrak{g}^*)$ is a Lie bialgebroid over a point \cite{driham}. A {\em quasi-Poisson manifold} (or a {\em Hamiltonian space}) for a Lie quasi-bialgebroid $(L,\delta,\chi)$ on $M$ \cite{quapoi} is given by an action $\rho:J^*L \rightarrow TS$ of $L$ on a smooth map $J:S \rightarrow M$ and a bivector field $\pi$ on $S$ such that:
 \begin{align*}  \frac{1}{2}[\pi,\pi]=\rho(\chi),\quad
 \mathcal{L}_{\rho(u)}\pi=\rho(\delta(u)), \quad \forall u\in \Gamma (L) \quad \pi^\sharp J^*=\rho\circ \mathtt{a}^*_*; \end{align*}
			where $\mathtt{a}_*:L^* \rightarrow TS $ is the component of $\delta$ in degree zero as before. Given a Manin pair $({E},{L})$ and a Lagrangian complement for $L$, i.e. a Lagrangian subbundle $K\hookrightarrow {E}$ such that ${E}\cong L\oplus K$, we obtain a canonical Lie quasi-bialgebroid structure on $L$ \cite{roycou}. If $R:(\mathbb{T}M,TM) \rightarrow ({E},L)$ is a Manin pair morphism and $K$ is a Lagrangian complement for $L$, the pullback $K\circ R$ is the graph of a bivector field on $M$ which, together with the induced $L$-action on $M$, makes it into a quasi-Poisson manifold, see \cite{buriglsev}. 

In order to describe the global counterpart of a Lie quasi-bialgebroid, we shall recall a basic fact about the duality of VB-groupoids. The \emph{core of a VB-groupoid $\mathcal{V}  \rightrightarrows \mathcal{V}_0$} over $\mathcal{G}  \rightrightarrows M $ is the vector bundle over $M$ defined by $C=\mathtt{u}_\mathcal{G}^*\ker(\mathtt{s}_{\mathcal{V} })\to M$. Let $\mathcal{V} \rightrightarrows \mathcal{V}_0$ be a VB-groupoid with core $C\to M$. Then $\mathcal{V}^*\rightrightarrows C^*$ is also a VB-groupoid, see \cite{macgen}. The groupoid structure on $\mathcal{V}^* \rightrightarrows C^*$ is determined by requiring that the natural pairing $ \langle \,,\, \rangle :H \times H^*  \rightarrow \mathbb{R} $ be a Lie groupoid morphism. In particular, the tangent and cotangent bundles of a Lie groupoid are VB-groupoids. A multivector field $\Psi\in \mathfrak{X}^k(\mathcal{G} ) $ on a Lie groupoid $\mathcal{G}  \rightrightarrows M$ is multiplicative if the contraction map $\Psi:\bigoplus^k_1 T^*\mathcal{G}  \rightarrow \mathbb{R} $ is a Lie groupoid morphism \cite{burcab}. A \emph{quasi-Poisson groupoid} \cite{quapoi} consists of a Lie groupoid $\mathcal{G} \rightrightarrows M$, a multiplicative bivector field $\pi\in \mathfrak{X}^2(\mathcal{G} )$ and $\psi\in\Gamma(\wedge^3 A_\mathcal{G} )$ such that
\begin{align*} [\pi,\pi]=\psi^r-\psi^l, \quad
	[\pi,\psi^l]=0, \end{align*} 
where $\psi^r$ and $\psi^l$ are the right and left-invariant extensions of $\psi$. The Lie algebroid of a quasi-Poisson groupoid inherits a canonical Lie quasi-bialgebroid structure \cite{quapoi}. The simplest example of a quasi-Poisson groupoid is a pair groupoid $M \times M \rightrightarrows M$ endowed with the multiplicative vector field $(\pi,-\pi)\in \mathfrak{X}^2(M \times M) $ for any $\pi\in \mathfrak{X}^2(M)$. 
\subsubsection{Morphisms of Manin pairs and quasi-Poisson structures on spaces of representations}\label{subsec:quapoimodspa} Take a marked surface $(\Sigma,V)$ and a Lie group $G$ with quadratic Lie algebra as in \S \ref{sec:quisur}. There is a canonical quasi-Poisson manifold structure on $M:=\hom(\Pi_1(\Sigma,V),G)$ determined by a morphism of Manin pairs
\[ \mathbf{R}_{\Sigma,V}:(\mathbb{T}M,TM) \rightarrow (\mathfrak{d}^V,\mathfrak{g}^V_\Delta);   \] 
see \cite[Thm. 2]{quisur} which extends the work in \cite{alemeikos} to surfaces with multiple marked points. We have a distinguished Lagrangian subspace $\mathfrak{g}_\star:=\{X\oplus -X|X \in\mathfrak{g} \} \subset \mathfrak{d} $ such that $\mathfrak{d}=\mathfrak{g}_\Delta\oplus \mathfrak{g}_\star$ and so we can identify $\mathfrak{d}^V=\left(\mathfrak{g}_\Delta\oplus \mathfrak{g}_\star\right)^V$ and its metric with $\left(\mathfrak{g}_\Delta \oplus \mathfrak{g}_\Delta^*\right)^V$ equipped with the canonical pairing defined by evaluation. Using this identification, we have that 
\begin{align}  \mathbf{R}_{\Sigma,V}=\{(\mathtt{a}_{\Sigma,V} (Z)+\pi_{\Sigma,V}(\alpha )\oplus \alpha ,Z\oplus \mathtt{a}_{\Sigma,V}^*\alpha )\in \mathbb{T}M \times  \mathfrak{d}^V|\alpha \oplus Z\in T^*M\times  \mathfrak{g}^V_\Delta   \}\hookrightarrow \mathbb{T}M \times  \overline{\mathfrak{d}^V};  \label{eq:quapoidir}   \end{align} 
where $\pi_{\Sigma,V}\in \mathfrak{X}^2(M)$ is characterized by $\text{Graph}(\pi_{\Sigma,V})=\mathfrak{g}^V_\star \circ \mathbf{R}_{\Sigma,V} $ and $\mathtt{a}_{\Sigma,V}:\mathfrak{g}^V \rightarrow \mathfrak{X}(M)   $ is the infinitesimal counterpart of the gauge action \eqref{eq:gauact}. In order to compute $\pi_{\Sigma,V}$ we can introduce a {\em skeleton} for $(\Sigma,V)$: an embedded 1-dimensional CW-complex $\mathfrak{S}  \hookrightarrow \Sigma$ equipped with an orientation on each of its 1-cells that has the following two properties: \begin{itemize}
    \item its set of vertices (0-dimensional cells) is $\mathfrak{S}_0= V$ and
    \item $\Sigma $ retracts to $\mathfrak{S} $ by deformation.
\end{itemize}  
Let us identify $\mathfrak{S} $ with an oriented graph $(\mathfrak{S}_1,\mathfrak{S}_0)$ in which we denote by $\mathfrak{S}_1 $ its set of edges (1-cells) and we denote by $\mathtt{S}(\mathbf{e} ), \mathtt{T}(\mathbf{e})\in \mathfrak{S}_0 $ respectively the source and target of $\mathbf{e} \in \mathfrak{S}_1$. Note that the choice of $\mathfrak{S}$ allows us to identify $M\cong G^{\mathfrak{S}_1 }$. Using such an identification, we will denote by $Y(\mathbf{e})$ for $Y\in \mathfrak{X}(G)$ the vector field on $M$ which is $Y$ on the factor corresponding to $\mathbf{e} \in \mathfrak{S}_1 $ and $0$ otherwise. The orientation of $\Sigma$ establishes a linear order on the half edges adjacent to $v\in V$, namely, one takes the outward direction to $\partial \Sigma$ as reference and then one orders the half edges in $\mathfrak{S}_1 $ that start or end at $v$ counterclockwise. We have then the following formula \cite[Equation 14]{quisur}:
\begin{align}  &\pi_{\Sigma,V}=\frac{1}{2} \sum_{v\in V}\sum_{\mathbf{e} <\mathbf{e}'} \sum_i \epsilon_i X_i(\mathbf{e}',v)\wedge X_i(\mathbf{e},v)  \label{eq:qpoimodspa}\\
& X_i(\mathbf{e},v)=\begin{cases} -X_i^r(\mathbf{e}), \quad &\text{if $v=\mathtt{T}(\mathbf{e}) $}, \\
X_i^l(\mathbf{e}), \quad &\text{if $v=\mathtt{S}(\mathbf{e}) $}; \end{cases}  \notag\end{align} 
where $\{X_i\}_i\subset \mathfrak{g} $ is an orthonormal basis with $ \epsilon_i=\langle X_i,X_i \rangle =\pm 1$ and $\mathbf{e},\mathbf{e}' \in \mathfrak{S}_1 $ run over the half edges adjacent to $v$. Formula \ref{eq:qpoimodspa} is closely related to the Fock-Rosly Poisson structure \cite{focros}, see \cite[Remark 6]{quisur}; note that in \eqref{eq:qpoimodspa} we follow the sign convention of \cite{quisur2} rather than the one in \cite{quisur}.  

Consider the Manin pair $(\mathcal{E}_\Gamma ,\mathcal{L}_\Gamma )$ corresponding to the boundary graph $\Gamma=(E,V) $ of $(\Sigma,V)$ and the canonical projection $\text{pr}_{(\mathfrak{d}^V,\mathfrak{g}^V_\Delta )}  :(\mathcal{E}_\Gamma ,\mathcal{L}_\Gamma ) \rightarrow (\mathfrak{d}^V,\mathfrak{g}_\Delta^V ) $. We can use \eqref{eq:quapoidir} to define the morphism of Manin pairs $R_{\Sigma,V}$ over $\mu$ as in \eqref{eq:canmp}  by prescribing that $\text{pr}_{(\mathfrak{d}^V,\mathfrak{g}_\Delta^V )}\circ R_{\Sigma,V} =\mathbf{R}_{\Sigma,V} $:
\[ R_{\Sigma,V}=\{(\mathtt{a}_{\Sigma,V} (Z)+\pi_{\Sigma,V}(\alpha )\oplus \alpha ,(Z\oplus \mathtt{a}_{\Sigma,V}^*\alpha,\mu(x)))\in \mathbb{T}M \times  \mathcal{E}_{\Gamma } |\alpha \oplus Z\in T^*_xM\times  \mathfrak{g}^V_\Delta, x\in M   \}. \]
This result is proven in \cite[\S 5.2]{quisur2}. Note that a marked surface in the sense of \cite{quisur2} is a fully marked surface in our convention. However, the arguments in \cite[Prop. 5.2]{quisur2} and the proof of \cite[Thm. 5.1]{quisur2} rely on \cite[Thm. 2, Thm. 4]{quisur} which were proven for marked surfaces (which may not be fully marked). Therefore, the results in \cite[\S 5.2]{quisur2} extend naturally to marked surfaces. When dealing with decorated moduli spaces, the difference between marked and fully marked surfaces is not essential as we can always introduce auxiliary marked points on the boundary of a surface to make it fully marked and then we can decorate these auxiliary points in a way that they are forgotten in the corresponding quotient, see \S \ref{subsec:forver}. For the sake of completeness, we will also give an alternative construction of $R_{\Sigma,V}$ in \S \ref{subsec:mpmor} using differential forms.

Finally, let us mention that an application of \eqref{eq:quapoidir} allows us also to describe the Poisson bivectors on decorated moduli spaces \cite[Thm. 1.4]{quisur2}. If $(H,\mathcal{A})$ is a decoration of $(\Sigma,V)$, the Poisson bivector on $\mathfrak{M}_G(\Sigma,V)_{H,\mathcal{A} }$ is given by the reduction of 
\begin{align} \pi_{{\Sigma},{V}  }+\sum_{v\in {V}} \mathtt{a}_{\Sigma,V} (\pi_v) \in \Gamma \left(\wedge^2\left( T\mu^{-1}(H)/ \mathtt{a}_{\Sigma,V}\left(\bigoplus_{v\in V} \mathfrak{g}_\Delta \cap \mathfrak{a}_v   \right)\right) \right),\label{eq:bivmodspa} \end{align}  
where $\pi_v\in \wedge^2 \left(\mathfrak{g}_\Delta/(\mathfrak{g}_\Delta\cap \mathfrak{a}_v )\right)$ is the bivector obtained as the quotient of the linear Dirac structure $\mathfrak{a}_v\subset \mathfrak{d}_v\cong \mathfrak{g}_\Delta\oplus \mathfrak{g}_\Delta^* $ by its kernel $\mathfrak{a}_v\cap \mathfrak{g}_\Delta$ as in \cite[Prop. 1.1.4]{coudir}. 
\begin{rema}\label{rem:cancomqpoi} Let us point out that a corollary of this discussion is that $\mathcal{L}_{\Gamma }$ admits a distinguished Lagrangian complement: $\mathcal{L}^\star_\Gamma=\mathfrak{g}^V_\star\times G^E$. \end{rema}


\subsubsection{The base manifold of the Poisson groupoid structure on $\mathfrak{M}_G(\widehat{\Sigma},\widehat{V})_{\widehat{H},\widehat{\mathcal{A}}} $}\label{subsec:poigro}\label{subsec:quoid} 
Formula \eqref{eq:bivmodspa} allows us to show that the 
 groupoid structure on $\mathfrak{M}_G(\widehat{\Sigma},\widehat{V})_{\widehat{H},\widehat{\mathcal{A}}} $ behaves well with respect to the decoration.
\begin{proof}[Proof of Proposition \ref{rem:baspoigro}] Consider the inclusion $i:\widehat{\mu}^{-1}(\widehat{H}) \hookrightarrow \hom(\Pi_1(\widehat{\Sigma},\widehat{V}),G)$. The kernel of $\mathfrak{B}_i(\widehat{\mathcal{A}  }\circ R_{\widehat{\Sigma},\widehat{V}})$ is the VB-groupoid $\mathcal{V}\rightrightarrows \mathcal{V}_0 $ given by the action of $ \mathcal{L}_{\widehat{ \Gamma}}\cap\widehat{\mathcal{A}} $ and then its unit vector bundle $\mathcal{V}_0$ is the distribution induced by the action of the Lie algebroid $\mathcal{L}^0_{\widehat{\Gamma } }\cap \widehat{A}_0=  \prod_{v\in V_0} (\mathfrak{a}_v \cap  \mathfrak{g}_\Delta)  \times G^{E'} $ on $\hom(\Pi_1({\Sigma},{V}_0),G)$. Let $\mu_0:\hom (\Pi_1(\Sigma,V_0),G) \rightarrow G^{E_0}$ be the corresponding moment map. By our choice of decoration and the form of the Lie groupoid \eqref{eq:gromodspa}, we have that $\mu^{-1}_0(\check{H})=\left(\widehat{\mu}|_{\hom (\Pi_1(\Sigma,V_0),G)}\right)^{-1}(\widehat{H}_0 ) $. Let $j:\mu^{-1}_0(\check{H}) \hookrightarrow \hom(\Pi_1({\Sigma},{V}_0),G) $ be the inclusion. We also have that $\ker \mathfrak{B}_j(\check{\mathcal{A}}\circ R_{\Sigma,V_0})$ is the distribution determined by the action of $ \prod_{v\in V_0} (\mathfrak{a}_v \cap  \mathfrak{g}_\Delta)  \times G^{E_0} $ and so it coincides with $\mathcal{V}_0 $. It follows that the leaf space of the null foliation of $ \mathfrak{B}_j(\check{\mathcal{A}}\circ R_{\Sigma,V_0})$ on $\mu^{-1}_0(\check{H})$ coincides with the quotient of $\left(\widehat{\mu}|_{\hom (\Pi_1(\Sigma,V_0),G)}\right)^{-1}(\widehat{H}_0 ) $ by the action of $\mathcal{L}^0_{\widehat{\Gamma } }\cap \widehat{A}_0$. In other words, the base of the groupoid structure on $\mathfrak{M}_G(\widehat{\Sigma},\widehat{V})_{\widehat{H},\widehat{L}  } $ is $\mathfrak{M}_G({\Sigma},{V}_0)_{\check{H},\check{L}  }$ as a manifold. 

We have to check that the Poisson structure induced by the Poisson groupoid structure on $\mathfrak{M}_G(\widehat{\Sigma},\widehat{V})_{\widehat{H},\widehat{\mathcal{A}}  } $ over $ \mathfrak{M}_G({\Sigma},{V_0})_{\check{H},\check{\mathcal{A}}  }$ is isomorphic to the canonical one. But this follows from the fact that the target map $\mathtt{t} $ of the groupoid \eqref{eq:gromodspa} is induced by the embedding $i_1:(\Sigma,V_0) \rightarrow (\widehat{\Sigma},\widehat{V} )$. As a consequence, $\pi_{\widehat{\Sigma},\widehat{V}}$ and $\pi_{{\Sigma},{V_0}}$ are $\mathtt{t} $-related, see \cite[Corollary 1]{quisur}. For the same reason, $\mathtt{t}$ commutes with the corresponding gauge actions \eqref{eq:gauact}. Since the decoration of $i_1(V_0)$ induced by $\widehat{\mathcal{A} }$ agrees with the decoration of $V_0$ given by $\check{\mathcal{A} }$, $\mathtt{t}$ is also compatible with the bivector fields induced by the gauge action as in \eqref{eq:bivmodspa} and so we are done. \end{proof}

\subsection{Examples} Now we shall illustrate Theorem \ref{thm:poigro} by giving some examples. 
\begin{exa}\label{exa:poigr} Let $\Sigma$ be the disk with three marked points on its boundary and let $S$ be given by one of the edges. Suppose that $\mathfrak{g} $ is the double of a Lie bialgebra $(\mathfrak{k} ,\mathfrak{k}^* )$ and let $K,K^* \hookrightarrow G$ be the connected integrations of $\mathfrak{k} $ and $\mathfrak{k}^* $ respectively. Notice that $\widehat{V}$ consists of only two points on the boundary of $\widehat{\Sigma}$. Let us decorate $\partial\widehat{\Sigma}$ symmetrically as follows: take $\mathfrak{a}_{v_1}=\mathfrak{k}\oplus \mathfrak{k}^* $ and $H_1=G$, $H_2=K$. We have that $\mathfrak{M}_G(\widehat{\Sigma},\widehat{V})_{\widehat{H},\widehat{\mathcal{A}}  } $ is isomorphic to the Poisson group structure on $K$, see \cite[Example 1.11]{quisur2}. The identification above is given as follows. Following the notation of Proposition \ref{pro:quoid}, we have a subgroupoid of the pair groupoid on $G^3$:
\[ (\mu,\mu)^{-1}(\mathcal{H})=\left\{ (a_i,b_i)_{i=1}^3\in G^3 \times G^3 |a_3=b_3,\, b_2^{-1}a_2\in K, \,\prod_{i=1}^3a_{3-i}=1=\prod_{i=1}^3b_{3-i}\right\} \hookrightarrow G^3 \times G^3 \] 
and we can put $[a_i,b_i]\mapsto a_2^{-1}b_2$ as the desired isomorphism $\mathfrak{M}_G(\widehat{\Sigma},\widehat{V})_{\widehat{H},\widehat{\mathcal{A}}  }=(\mu,\mu)^{-1}(\mathcal{H}) /G^2\cong K $. Note that the base manifold $\mathfrak{M}_G({\Sigma},{V}_0)_{\check{H},\check{\mathcal{A}}  } $ of the Poisson groupoid structure on $\mathfrak{M}_G(\widehat{\Sigma},\widehat{V})_{\widehat{H},\widehat{\mathcal{A}}  }$ is given by the moduli space of flat bundles on a disk with only one marked point so it is a point itself. 

The Poisson structure on $K$ can be computed as follows. Since we can take one of the boundary edges as a skeleton for $(\widehat{\Sigma},\widehat{V})$, \eqref{eq:qpoimodspa} shows that $\pi_{\widehat{\Sigma},\widehat{V}}=0$. Now $\mathfrak{a}_{v_1}\subset \mathfrak{d}\cong \mathfrak{g}_\Delta\oplus \mathfrak{g}_\Delta^* $ has trivial intersection with $\mathfrak{g}_\Delta$ so it can be viewed as the graph of a bivector $\pi_{v_1}\in \wedge^2 \mathfrak{g}_\Delta$ that is the canonical skew-symmetric bilinear form on $\mathfrak{g}_\Delta^* $ induced by the identification $\mathfrak{g}_\Delta\cong \mathfrak{k} \oplus \mathfrak{k}^* $  so $\pi_{v_1}=\frac{1}{2} \sum_i X_i \wedge \xi_i $ where $\{X_i\}_i\subset  \mathfrak{k} $ is any basis and $\{\xi_i\}_i\subset \mathfrak{k}^* $ is its dual basis. Applying \eqref{eq:bivmodspa} we get then that the Poisson structure on $K \hookrightarrow G$ is induced by restricting the bivector field
\[ \pi_G:=\frac{1}{2} \sum_i X_i^l \wedge \xi_i^l-X_i^r \wedge \xi_i^r \]
which is in fact a Poisson tensor on the whole $G$. Note that $(G,\pi_G)$ itself can be constructed as the decorated moduli space that we get by using the decoration $H_1=H_2=G $ and $\mathfrak{a}_{v_1}$ on $(\widehat{\Sigma},\widehat{V} )$ as before. The fact that $K \hookrightarrow (G,\pi_G)$ is a Poisson submanifold goes back to \cite[Prop. 2.36]{luphd}. \end{exa} 

\begin{exa}\label{exa:poigro0} Let ${\Sigma}$ be the disk with four marked points on its boundary and suppose that $S$ consists of exactly one edge and suppose that $\mathfrak{g} $ is the double of a Lie bialgebra $(\mathfrak{k} ,\mathfrak{k}^* )$ as in Example \ref{exa:poigr}. Let us decorate $\partial \widehat{\Sigma}$ as follows: $H_1=G$, $H_2=K$, $H_3=K^*$ and $\mathfrak{a}_{v_1}=\mathfrak{k} \oplus \mathfrak{k}\hookrightarrow \mathfrak{d} =\mathfrak{g} \oplus \mathfrak{g} $, $\mathfrak{a}_{v_2} =\mathfrak{k}^*\oplus \mathfrak{k} \hookrightarrow \mathfrak{d}   $, see Figure \ref{fig:squ}. The Poisson groupoid that we get in this situation is the Poisson group structure on $K^*$, see \cite[Example 8]{quisur}. At the level of the groupoid structure, the isomorphism $\mathfrak{M}_G(\widehat{\Sigma},\widehat{V} )_{\widehat{H},\widehat{\mathcal{A}}}\cong K^* $ is given as follows. In the notation of Proposition \ref{pro:quoid}, we have that 
\[ (\mu,\mu)^{-1}(\mathcal{H})=\left\{(a_i,b_i)_{i=1}^4\in G^4 \times G^4\left|\begin{aligned} &a_2,b_2\in K,\, a_3^{-1}b_3\in K^*,\,a_4=b_4,\\  
&\prod_{i=1}^4a_{4-i}=1=\prod_{i=1}^4b_{4-i} \end{aligned} \right.\right\}, \]
and we have an action of $G^2 \times K^2$ on $(\mu,\mu)^{-1}(\mathcal{H}) $ defined by 
\[ (x,y,u,v)\cdot (a_i,b_i)_{i=1}^4=(xa_4y^{-1},ya_3,a_2u^{-1},ua_1x^{-1},xb_4y^{-1},yb_3,b_2v^{-1},vb_1x^{-1}), \] 
for all $x,y\in G$ and all $u,v\in K$, where $(a_i,b_i)_{i=1}^4\in (\mu,\mu)^{-1}(\mathcal{H})$. Notice that the $G$-factors act by automorphisms while $K \times K \rightrightarrows K$ acts on $(\mu,\mu)^{-1}(\mathcal{H}) $ by an action map which is a Lie groupoid morphism. So the map $[a_i,b_i]\mapsto a^{-1}_3b_3\in K^*$ defines the desired groupoid isomorphism, for all $[a_i,b_i]\in \mathfrak{M}_G(\widehat{\Sigma},\widehat{ V})_{\widehat{H},\widehat{\mathcal{A}}} $. Following Proposition \ref{rem:baspoigro}, the base of the Poisson groupoid structure on $\mathfrak{M}_G(\widehat{\Sigma},\widehat{ V})_{\widehat{H},\widehat{\mathcal{A}}}$ is $\mathfrak{M}_G({\Sigma},{ V_0})_{\check{H},\check{\mathcal{A}}}=K/K=\{1\}$, note that $({\Sigma},{ V_0})$ is a disk with two marked points. \end{exa} 
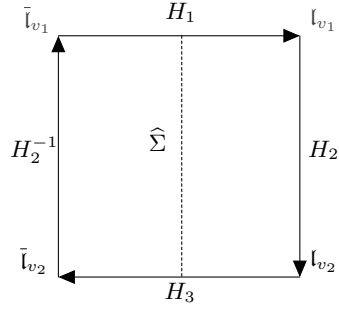
\begin{figure} 
\begin{center}  
 \definecolor{uuuuuu}{rgb}{0.26666666666666666,0.26666666666666666,0.26666666666666666}
\begin{tikzpicture}[line cap=round,line join=round,>=stealth,x=1cm,y=1cm,scale=0.4]
\clip(-4.3,-5.12) rectangle (11.7,5.3);
\draw [<-,line width=0.9pt] (0.44,2.64) -- (6.86,2.64);
\draw [<-,line width=0.9pt] (6.86,2.64) -- (6.86,-3.78);
\draw [<-,line width=0.9pt] (6.86,-3.78) -- (0.44,-3.78);
\draw [<-,line width=0.9pt] (0.44,-3.78) -- (0.44,2.64);
\draw [line width=0.9pt,dashed] (3.72,2.64)-- (3.72,-3.78);
\begin{scriptsize}

\draw[color=black] (-0.3,-3.35) node {${\mathfrak{a}}_{v_2}$};

\draw[color=black] (7.7,-3.35) node {${\mathfrak{a}^\vee_{v_2}}$};

\draw[color=uuuuuu] (7.7,3.07) node {${\mathfrak{a}^\vee_{v_1}}$};

\draw[color=uuuuuu] (-0.3,3.07) node {${\mathfrak{a}}_{v_1}$};
\draw[color=black] (3.7,3.27) node {$H_1$};
\draw[color=black] (7.8,-0.4) node {$H_2^{-1}$};
\draw[color=black] (3.68,-4.19) node {$H_3$};
\draw[color=black] (-0.2,-0.4) node {$H_2$};
\draw[color=black] (3.08,-0.11) node {$\widehat{\Sigma}$};
\end{scriptsize}
\end{tikzpicture}\caption{The decorated surface $(\widehat{\Sigma},\widehat{V} )$. }\label{fig:squ} \end{center}
\end{figure} 
\begin{exa}\label{exa:coipoigro} Let $\Sigma$ be the disk with $n+2$ marked points on its boundary and take $S=\{e_{n+2}\}$. Suppose that there exists a Lagrangian Lie subalgebra $\mathfrak{k} \hookrightarrow \mathfrak{d} $ which is a complement of the diagonal $\mathfrak{g}_\Delta$. Take a coisotropic Lie subalgebra $\mathfrak{c}_i \subset \mathfrak{g} $ for every $i=1\dots n-1$. Let us define
\begin{align}  \widetilde{\mathfrak{c}}_i=\{u\oplus v \in \mathfrak{c}_i \oplus \mathfrak{c}_i :u-v\in \mathfrak{c}_i^\perp\} \label{eq:lagcoi} \end{align} 
for all $i=1\dots n-1$. We have that $\widetilde{\mathfrak{c} }_i \hookrightarrow \mathfrak{d} $ is a Lagrangian Lie subalgebra. Now let us decorate the boundary of $\widehat{\Sigma}$ as follows: put $\mathfrak{a}_{v_i}=\widetilde{\mathfrak{c} }_i$ for all $i=1\dots n-1$ and $\mathfrak{a}_{v_n}=\mathfrak{k} $ at the vertices, take $H_e=G$ for each edge $e$ and let us extend this to a decoration of $\widehat{\Sigma} $ which is symmetric with respect to $S$. Let us suppose that the connected integration $C_i \hookrightarrow G$ of $\mathfrak{c}_i$ is closed for all $i=1\dots n-1$. Then the associated moduli space $\mathfrak{M}_G(\widehat{\Sigma},\widehat{V})_{\widehat{H},\widehat{\mathcal{A}}  }$ is the manifold
\[ \overbrace{G \times_{C_{n-1}} G \times_{C_{n-2}}\dots \times_{C_1} G \times_{C_1} G \times_{C_2} \dots \times_{C_{n-1}}G}^{2n-1} \] 
and its Poisson structure is a mixed product Poisson structure \cite{lumou} for special choices of $\mathfrak{g} $, $\mathfrak{k} $ and the $\mathfrak{c}_i$, see \cite[Example 15]{quisur} and Example \ref{exa:brucel}. \end{exa}  
\begin{exa} Let $(\Sigma,V)$ be an annulus with two marked points $v$ and $v'$, one on each boundary component as in Figure \ref{fig:dousur}. Let $\mathbf{e}_2$ be the oriented boundary component corresponding to $v'$ and let $\mathbf{e}_1 $ be a straight path that joins $v'$ with $v$. Then the map defined by $\rho\mapsto (a_1,a_2):=(\rho(\mathbf{e}_1),\rho(\mathbf{e}_2))$ for $\rho\in M:=\hom(\Pi_1(\Sigma,V),G)$ allows us to identify $M$ with $\{(a_1,a_2)\in G^2\}$. The moment map $\mu:M \rightarrow G^2$ is thus given by $(a_1,a_2)\mapsto (a_1a_2^{-1}a_1^{-1},a_2)$. Suppose that $\mathfrak{a}_v \hookrightarrow \mathfrak{d}$ is a Lagrangian Lie subalgebra which is complementary to the diagonal and take $S=\{\mathbf{e}_2 \}$. So $\mathfrak{a}_v$ and $H_e=G$ extend to a symmetric decoration of $(\widehat{\Sigma},\widehat{V})$ as in Figure \ref{fig:dousur}. In the notation of Proposition \ref{pro:quoid}, we have then that
\[ (\mu,\mu)^{-1}(\mathcal{H} )=\left\{(a_i,b_i)_{i=1}^2\in G^2 \times G^2|a_2=b_2 \right\}. \] 
There is a $G$-action on $(\mu,\mu)^{-1}(\mathcal{H} )$ corresponding to the single vertex adjacent to the loop in $S$:
\[ g\cdot (a_i,b_i)_{i=1}^2=(a_1g^{-1},ga_2g^{-1},b_1g^{-1},gb_2g^{-1}) \quad \forall g\in G, \, \forall (a_i,b_i)_{i=1}^2\in (\mu,\mu)^{-1}(\mathcal{H} ). \] 
Then the map $[a_i,b_i]_{i=1}^2\mapsto (x,y):=(a_1b_1^{-1},b_1b_2b_1^{-1}) $ for all $[a_i,b_i]_{i=1}^2\in \mathfrak{M}_G(\widehat{\Sigma},\widehat{V})_{\widehat{H},\widehat{\mathcal{A} }  }$ is a Lie groupoid isomorphism between $\mathfrak{M}_G(\widehat{\Sigma},\widehat{V})_{\widehat{H},\widehat{\mathcal{A} }  } \rightrightarrows \mathfrak{M}_G({\Sigma},{V_0})_{\check{H},\check{\mathcal{A} }  }$ and the action groupoid $G \times G \rightrightarrows G$ corresponding to the action of $G$ on itself by conjugation. Let us point out that $\mathfrak{M}_G(\widehat{\Sigma},\widehat{V})_{\widehat{H},\widehat{\mathcal{A} }  } \rightrightarrows \mathfrak{M}_G({\Sigma},{V_0})_{\check{H},\check{\mathcal{A} }  }$ integrates, as a Lie groupoid, the (untwisted) Cartan-Dirac structure $L_G$ of Example \ref{ex:quaham}. Note that $(\widehat{\Sigma}, \widehat{V} )$ is identified with $(\Sigma,V)$ itself and so we can use $\mathfrak{S}_1:= \{\mathbf{e}_1,\mathbf{e}_2  \}$ as the set of edges of a skeleton $\mathfrak{S}$ of $(\widehat{\Sigma}, \widehat{V} )$. Furthermore, $\hom(\Pi_1(\widehat{\Sigma}, \widehat{V} ),G) $ is also identified with $\mathfrak{M}_G(\widehat{\Sigma},\widehat{V})_{\widehat{H},\widehat{\mathcal{A} }  }\cong G \times G\cong G^{\mathfrak{S}_1 }$. Using the skeleton $\mathfrak{S}$ and formula \eqref{eq:qpoimodspa} (with the corresponding notation) we get that 
\[ \pi_{\widehat{\Sigma}, \widehat{V}}=\frac{1}{2} \sum_{i}\epsilon_i\left(X_i^l(\mathbf{e}_2) \wedge X_i^r(\mathbf{e}_2)-X_i^l(\mathbf{e}_2) \wedge X_i^l(\mathbf{e}_1) -X_i^r(\mathbf{e}_2)\wedge X_i^l(\mathbf{e}_1)   \right). \]
We are thus recovering the quasi-Poisson bivector field of \cite[Corollary 4.23]{quapoi} which is isomorphic to the one on the double $D(G)$ as in \cite[Example 5.3]{alemeimal}. So $\pi_{\widehat{\Sigma}, \widehat{V}}$ integrates the Lie quasi-bialgebroid structure $\delta: \wedge^\bullet (\mathfrak{g}_\Delta \times G) \rightarrow \wedge^{\bullet+1} (\mathfrak{g}_\Delta \times G)$ on $L_G \cong \mathfrak{g}_\Delta \times G$ determined by the splitting $\mathfrak{d} \times G= (\mathfrak{g}_\Delta \times G )\oplus (\mathfrak{g}_\star \times  G)$.  

On the other hand, we can view $\mathfrak{a}_v \subset \mathfrak{d}\cong  \mathfrak{g}_\Delta \oplus \mathfrak{g}_\Delta^*  $ itself as the graph of the bivector $\pi_v$ that appears in \eqref{eq:bivmodspa}. In fact, we can also view it as a (constant) section of $\wedge^2 (\mathfrak{g}_\Delta \times G )$. The gauge action \eqref{eq:gauact} on $G \times G$ can be written using the multiplication on the action groupoid $G \times G \rightrightarrows G$:
 \[ (g,h)\cdot (x,y)=(gxh^{-1},hyh^{-1})=\mathtt{m}(\mathtt{m} (  (g,xyx^{-1}),(x,y)),\mathtt{i} ((h,y))),\quad \forall (g,h)\in G^{\widehat{V} },\, \forall (x,y)\in G \times G. \]
Since the infinitesimal gauge action $\mathtt{a}_{\widehat{\Sigma},\widehat{V}  } $ is obtained by differentiating the formula above with respect to $(g,h)$, \eqref{eq:bivmodspa} implies that the Poisson bivector on $\mathfrak{M}_G(\widehat{\Sigma},\widehat{V})_{\widehat{H},\widehat{\mathcal{A} }  }$ can also be viewed as $\pi_{\widehat{\Sigma}, \widehat{V}}+\pi_v^r-\pi_v^l$. In other words, we got an integration of the twist of $\delta$ given by $\pi_v$ as in \cite[Prop. 4.15]{quapoi}. The decoration given by $\mathfrak{a}_v$ thus corresponds to equipping $L_G$ with a Lie bialgebroid structure which is a twist of the canonical Lie quasi-bialgebroid structure $\delta$ on $L_G$. An interesting concrete choice of $\mathfrak{a}_v$ is given by \eqref{eq:stapoigro} (for $G$ complex semi-simple). The corresponding Lie bialgebroid is the one given by the {\em Gauss-Cartan} splitting of \cite[\S 3.6]{purspi}. \end{exa} 
Now we shall see some examples which illustrate how Theorem \ref{thm:poigro} produces nontrivial symplectic groupoids even when considering the simplest possible surface.  
\begin{exa}\label{exa:dousym} Suppose that $\Sigma$, $S$ and $\mathfrak{g}$ are as in Example \ref{exa:poigro0}. Now let us decorate the boundary of $\widehat{\Sigma} =\Sigma \cup_S \Sigma$ as follows. Take $\mathfrak{a}_{v_1}=\mathfrak{k}\oplus \mathfrak{k}^*\subset \mathfrak{g} \oplus \mathfrak{g} $ and $\mathfrak{a}_{v_2}=\mathfrak{k}^*\oplus \mathfrak{k}\subset \mathfrak{g} \oplus \mathfrak{g} $. Let $K,K^*\subset G$ be the connected subgroups which integrate $\mathfrak{k} $ and $\mathfrak{k}^* $ respectively and let us put $H_1=H_3=K^*$ and $H_2=K$, see Figure \ref{fig:squ}. Then the symplectic groupoid that we get by applying Theorem \ref{thm:poigro} to this situation is the Lu-Weinstein symplectic groupoid which integrates $K$ \cite{luwei2}, see \cite[Example 1.2]{quisur2}. 

Let us identify $\hom(\Pi_1(\widehat{\Sigma},\widehat{V} ),G)$ with $\{ (x_i)_{i=1}^4\in G^4|\prod_{i=1}^4 x_{4-i}=1\}$. In order to apply the description of $R_{\widehat{\Sigma}, \widehat{V}  }$ provided in \S \ref{subsec:mpmor}, we can triangulate $\widehat{\Sigma}$ by decomposing it along the diagonal joining the vertices $v_3$ and $v_1$. Then $R_{\widehat{\Sigma}, \widehat{V}  }$ is determined by the 2-form on $\hom(\Pi_1(\widehat{\Sigma},\widehat{V} ),G)$ 
\begin{align}  \Omega|_{(x_i)_{i=1}^4}=\frac{1}{2}\left( \langle x_1^*\theta^l, x_4^* \theta^r \rangle +\langle x_3^*\theta^l,x_2^*\theta^r \rangle \right) \label{eq:2forsqu} \end{align} 
as in \eqref{eq:mpmodspa}. Following the proof of Proposition \ref{pro:mulmp}, we can check that the symplectic structure on $\mathfrak{M}_G(\widehat{\Sigma},\widehat{V}  )_{\widehat{H},\widehat{\mathcal{A} }  }=\widehat{\mu}^{-1}(\widehat{H} )\subset \hom(\Pi_1(\widehat{\Sigma},\widehat{V} ),G)$ is obtained by pulling back $\Omega$ to $\widehat{\mu}^{-1}(\widehat{H} )$. 

Interestingly, $\Omega $ is not multiplicative (as in \cite{burcab}) with respect to the groupoid structure on $\hom(\Pi_1(\widehat{\Sigma},\widehat{V} ),G)$ defined in \eqref{eq:gromodspa} and yet its pullback to $\widehat{\mu}^{-1}(\widehat{H} )$ is multiplicative, see \cite[Lemma 6.15]{intpoihom} for a closely related situation. The general explanation of this phenomenon comes from shifted symplectic geometry and it will be treated elsewhere. \end{exa}

\begin{exa} Let us consider the same setting as in the previous example but let us label $v_2$ differently. Let $\mathfrak{l}\subset \mathfrak{g} $ be a Lagrangian subalgebra such that the connected integration $B\subset K$ of $\mathfrak{b} =\mathfrak{k} \cap \mathfrak{l} $ is closed and let us take $\mathfrak{a}_{v_1}=\mathfrak{k} \oplus \mathfrak{k}^*$, $\mathfrak{a}_{v_2}=\mathfrak{l}\oplus \mathfrak{k}\subset \mathfrak{g} \oplus \mathfrak{g} $ and $H_1=K^*$, $H_2=K$, $H_3=L$, where $L \hookrightarrow G$ is the connected integration of $\mathfrak{l} \hookrightarrow \mathfrak{g}  $. Then Theorem \ref{thm:poigro} gives us a symplectic groupoid which integrates the Poisson homogeneous space $K/B$, thus recovering the construction of \cite{intpoihom} for this particular case, see \cite[\S 4.3]{sevmorqua}. The symplectic form on this symplectic groupoid is then obtained by reducing $\Omega $ as in \eqref{eq:2forsqu} along the quotient map of the gauge action of $B^2$ on $\widehat{\mu}^{-1}(\widehat{H} )$. \end{exa}

\begin{exa}\label{exa:brucel} Let us consider $(\Sigma,V)$ and $S=\{e_{n+2}\}$ as in Example \ref{exa:coipoigro} but suppose that $\mathfrak{g}$ is complex semi-simple. Let us use the notation used at the beginning of \S \ref{subsec:symdec}. Choose a Cartan subalgebra $\mathfrak{h} \hookrightarrow \mathfrak{g} $ and a system of positive roots. Denote by $\mathfrak{n}_{\pm}$ the sum of the positive (negative) root spaces and consider the associated Borel subalgebras $\mathfrak{b}_{\pm}=\mathfrak{h} \oplus\mathfrak{n}_{\pm}$. Let $B_{\pm}$ be the Borel subgroups integrating $\mathfrak{b}_{\pm} $. Let us decorate the boundary of $(\widehat{\Sigma},\widehat{V})$ as follows: take $H_1=B_+$ and $H_{i}=G$ for $i=2\dots n+1$. Since the Borel subalgebras are coisotropic, we can put $\mathfrak{a}_{v_i}=\widetilde{\mathfrak{b} }_+$ for all $i=1\dots n-1$ as in \eqref{eq:lagcoi} and 
\begin{align}  \mathfrak{a}_{v_n}=\mathfrak{g}^* := \{X\oplus Y\in \mathfrak{b}_- \oplus \mathfrak{b}_+|\text{pr}_{\mathfrak{h}}(X)=-\text{pr}_{\mathfrak{h} }(Y)\}. \label{eq:stapoigro} \end{align} We can use the $H_i$ and $\mathfrak{a}_{v_i} $ thus described to decorate $\partial \widehat{\Sigma}$ symmetrically with respect to $S$. Let $(\widehat{H},\widehat{\mathcal{A}})$ be such a decoration. Notice that $(\mathfrak{g} \oplus \mathfrak{g} ,\mathfrak{g}_\Delta,\mathfrak{g}^*)$ is a Manin triple which induces the {\em standard Poisson-Lie group structure $\pi_G$ on $G$}. Theorem \ref{thm:poigro} implies then that $\mathfrak{M}_G(\widehat{\Sigma},\widehat{V})_{\widehat{H},\widehat{\mathcal{A}}  }$ is a Poisson groupoid. 

We can find symplectic subgroupoids of $\mathfrak{M}_G(\widehat{\Sigma},\widehat{V})_{\widehat{H},\widehat{\mathcal{A}}  }$ as in Proposition \ref{thm:symfol} in the following manner. Take $u_i$ in the normalizer of the maximal torus $T=B_+\cap B_-$, $i=1\dots n-1$ and consider the $\widehat{\mathcal{A} }  $-orbit $\mathcal{O} $ of the element 
\[ x=(1,1,(u_1,\dots,u_{n-1}),(u_1^{-1},\dots,u_{n-1}^{-1}))\in G_{\mathtt{T}(e_{n+2})} \times G_{\mathtt{S}(e_{n+2})}\times G^{j_1(E')} \times G^{j_2(E')}\cong G^{\widehat{E} }; \] 
where the factors are the ones used in the description of the groupoid structure on $G^{\widehat{E} }$ as in \eqref{eq:gpdtar1} (with the structure maps specified in the proof of Lemma \ref{lem:mulca}). So $x$ is a unit of this groupoid structure and hence $\mathcal{O}\hookrightarrow G^{\widehat{E} } $ is a subgroupoid. As a consequence, \[ \mathcal{O}_{\widehat{H},\widehat{\mathcal{A}}} :=  \widehat{\mu}^{-1}(\mathcal{O} )/(\mathcal{L}_{\widehat{\Gamma } } \cap \widehat{\mathcal{A} } ) \] 
is a symplectic subgroupoid of the Poisson groupoid structure on $\mathfrak{M}_G(\widehat{\Sigma},\widehat{V})_{\widehat{H},\widehat{\mathcal{A}}}$ over the {\em generalized Bruhat cell}: 
\[ B_+u_1B_+/B_+\times_{B_+} B_+u_2B_+/B_+ \times_{B_+}\dots \times_{B_+} B_+u_{n-1}B_+ /B_+  . \] 
For $n=2$, what we obtain is an integration of the Poisson structure on the Bruhat cell $B_+u_1B_+/B_+$ induced by its inclusion as a Poisson submanifold in $G/B_+$, see also \cite{lumou2} for an alternative integration. More generally, the generalized Bruhat cells were also integrated by J.-H. Lu, V. Mouquin and S. Yu \cite{confla}. We shall compare their approach with our own for $n=2$ which corresponds to the surface depicted in Figure \ref{fig:squ}. 

Consider the Poisson structure $(\pi_G,-\pi_G)$ on the pair groupoid $ G \times{G} \rightrightarrows G$, we denote this Poisson manifold by $G \times \overline{G} $. We have that $B_+ \hookrightarrow (G,\pi_G)$ is a Poisson subgroup. Since the diagonal subgroup $B_+ \hookrightarrow B_+\times \overline{B_+} $ is coisotropic, the right $B_+$-action by automorphisms on $G\times \overline{G} $ induces a Poisson groupoid structure on the quotient $( {G}\times  \overline{G})/B_+\rightrightarrows G/B_+$. The integration of $P:=B_+u_1B_+/B_+$ described in \cite{confla} is given by the symplectic leaf of $({G} \times \overline{G} )/B_+\rightrightarrows G/B_+$ which contains $P$. Let us follow again the notation of Proposition \ref{pro:quoid} for the decorated surface $(\widehat{\Sigma},\widehat{V} )$ with decoration $(\widehat{H},\widehat{\mathcal{A}} )$. We have that
\[ (\mu,\mu)^{-1}(\mathcal{H})=\left\{ (a_i,b_i)_{i=1}^4\in G^4 \times G^4\left|\begin{aligned}  &a_4=b_4\in G, \, \prod_{i=1}^4 a_{4-i}=\prod_{i=1}^4 b_{4-i}=1 \\ &b_1a_1^{-1}\in B_+ \end{aligned} \right. \right\} \]
and so we can define the Lie groupoid morphism $\Psi:\mathfrak{M}_G(\widehat{\Sigma} ,\widehat{V})_{\widehat{H},\widehat{\mathcal{A}}}=(\mu,\mu)^{-1}(\mathcal{H} )/\mathcal{K} \rightarrow( {G} \times \overline{G})/B_+ $ as follows: $\Psi([a_i,b_i])=[a_2,b_2b_1a_1^{-1}]$ for all $[a_i,b_i]\in (\mu,\mu)^{-1}(\mathcal{H} )/\mathcal{K}$; we have that $\Psi$ is a actually an isomorphism. On the unit manifold, $\Psi$ is just the identity on $ G/B_+$. We just have to check that $\Psi$ is a  Poisson morphism. Let us describe $\mathfrak{M}_G(\widehat{\Sigma} ,\widehat{V})_{\widehat{H},\widehat{\mathcal{A} }}$ more conveniently as the quotient of 
\[ Q=\left\{(x_i)\in G^4\left| x_1\in B_+,\, \prod_{i=1}^4x_{4-i}=1\right. \right\} \]  
by the $B_+ \times B_+$-action defined by
$(u,v)\cdot (x_i)=(ux_1v^{-1},x_2u^{-1},x_3,vx_4)$ for all $u,v\in B_+$ and all $(x_i)\in Q$. In this description, $\Psi$ is given by $[x_i]\mapsto [x_2,x_4^{-1}x_1^{-1}]$. Let $\Delta_+\subset \mathfrak{h}^* $ be the set of positive roots associated to $\mathfrak{h} $. For each $\alpha\in  \Delta_+$, let $E_{\alpha }\in \mathfrak{g}_{\alpha } $ and $E_{-\alpha }\in \mathfrak{g}_{-\alpha }$ be such that $\langle E_{\alpha },E_{-\alpha}\rangle =1$. The standard {\em quasitriangular $r$-matrix} associated to $(\mathfrak{d},\mathfrak{g}_\Delta,\mathfrak{h} )$ is
\[ r=\frac{1}{2} \sum_a h_a\otimes h_a +\sum_{\alpha \in \Delta_+} E_{-\alpha } \otimes E_{\alpha }; \]  
where $\{h_a\}_a$ is an orthonormal basis of $\mathfrak{h} $. Let $\Lambda$ be the skew-symmetric part of $r$, then $\pi_G=\Lambda^r-\Lambda^l$. On the other hand, the quotient by the right $B_+$-action $G/B_+$ inherits a Poisson structure $\pi_{G/B_+}$ given by the image of $\Lambda$ under the residual left $G$-action, see \cite[Example 6.11]{lumou}. Let $\lambda$ be the left $\mathfrak{g}  \times \mathfrak{g}  $-action on $G/B_+ \times G$. The Poisson structure on $({G}\times \overline{G})/B_+$ is isomorphic to the mixed product Poisson structure on $(G/B_+,\pi_{G/B_+}) \times_{(\lambda,R)} (G,\pi_G)$ by means of the map $J$ defined by $[p,q]\mapsto ([p],pq^{-1})$, see \cite[Lemma 7.7]{lumou}. So it is enough to verify that $J\circ \Psi:(\mathfrak{M}_G(\widehat{\Sigma} ,\widehat{V})_{\widehat{H},\widehat{\mathcal{A}}},\widehat{\pi}) \rightarrow (G/B_+,\pi_{G/B_+}) \times_{(\lambda,R)} (G,\pi_G) $ is a Poisson morphism. Let us describe the Poisson (symplectic) structure $\widehat{\pi} $ on $\mathfrak{M}_G(\widehat{\Sigma} ,\widehat{V})_{\widehat{H},\widehat{\mathcal{A}}}$ using \eqref{eq:bivmodspa}. We can take the oriented boundary edges $\{\mathbf{b}_i  \}_{i=1,2,3}$ corresponding to the coordinates $x_1,x_2,x_3\in G$ as a skeleton for $(\widehat{\Sigma},\widehat{V} )$. Applying \eqref{eq:qpoimodspa} we get 
\[ \pi_{\widehat{\Sigma},\widehat{V}  }=-\frac{1}{2} \sum_{i}\epsilon_{i}( X^r_{i }(\mathbf{e}_1 ) \wedge X^l_{i }(\mathbf{e}_2) + X^r_{i }(\mathbf{e}_2 ) \wedge X^l_{i } (\mathbf{e}_3)). \]  
We can check that $\pi_{v_i}=0$ for $i=1,4$ and $\pi_{v_i}=\Lambda\in \wedge^2 \mathfrak{g} $ for $i=2,3$. Since $J\circ \Psi([x_i])=([x_2],x_3^{-1})$ for all $[x_i]\in \mathfrak{M}_G(\widehat{\Sigma},\widehat{V})_{\widehat{H},\widehat{\mathcal{A}}}$, a calculation shows that it is indeed a Poisson morphism as desired. As a consequence, $\mathfrak{M}_G(\widehat{\Sigma},\widehat{V})_{\widehat{H},\widehat{\mathcal{A}}}   $ is isomorphic to $({G} \times \overline{G})/B_+ $ as a Poisson groupoid over $G/B_+$. Therefore, the symplectic groupoid structure on $\mathcal{O}_{\widehat{H},\widehat{\mathcal{A} }}  \rightrightarrows P$ determined by Proposition \ref{thm:symfol} indeed integrates the Poisson structure on $P$ induced by being a Poisson submanifold of $G/B_+$. The explicit description of the symplectic leaves in $({G}\times \overline{G})/B_+$ is given in \cite{confla}. \end{exa}  

\section{Double Poisson groupoids and decorated surfaces}\label{sec:doupoigro}
The symplectic groupoid of Example \ref{exa:dousym} can be seen as a groupoid over both $K$ and $K^*$, this fact indicates that we may get an additional groupoid structure on a moduli space of flat bundles over a surface if the decoration of the boundary of the corresponding surface is symmetric with respect to two decompositions as a double surface. In \S \ref{subsec:sewdou} we shall see the general condition that ensures the existence of two compatible groupoid structures on the moduli spaces we have described, thereby allowing the systematic construction of double Poisson (symplectic) groupoids, see Theorem \ref{thm:doupoigro}. 

On the other hand, if we consider moduli spaces of flat $\mathcal{G}$-bundles over a compact oriented surface $\Sigma$ (or any manifold), where $\mathcal{G} $ is a Lie 2-group, that is, a group object in the category of Lie groupoids, then the associated representation space carries a natural groupoid structure. If the Lie 2-algebra of $\mathcal{G} $ is quadratic with respect to a bilinear form compatible with its groupoid structure, then we are in a position to use the associated $\Gamma $-twisted Dirac structure, where $\Gamma $ is the boundary graph of $\Sigma$ \S \ref{subsec:catgro}. In this situation, the $\Gamma $-twisted Cartan-Dirac structure is multiplicative and then it can be used to construct Poisson groupoids, see Theorem \ref{thm:twicardir} which is also based on Proposition \ref{pro:mulmp}. 

To conclude, we shall see that, combining the previous two observations, we can construct double and triple Poisson groupoid structures on the moduli space $\mathfrak{M}_{\mathcal{G} }(\Sigma,V)_{H,\mathcal{A}}$, as long as the boundary data $(H,\mathcal{A})$ are compatible with the groupoid structure on $\mathcal{G} $, see Theorems \ref{thm:doupoigro2} and \ref{thm:tripoigro}.
\subsection{Double Poisson groupoids and doubly symmetric decorations}\label{subsec:sewdou} 
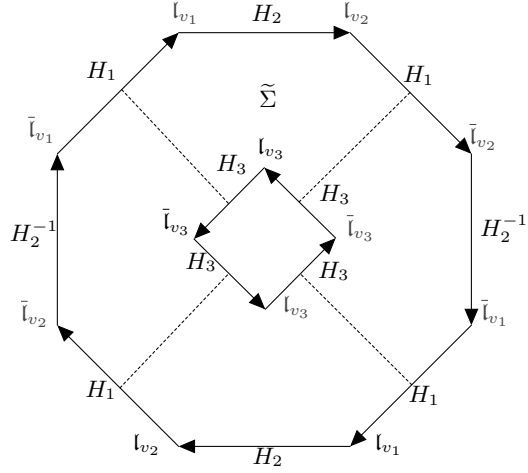
\begin{figure} \begin{center}   
\definecolor{uuuuuu}{rgb}{0.26666666666666666,0.26666666666666666,0.26666666666666666}
\begin{tikzpicture}[line cap=round,line join=round,>=stealth,x=1cm,y=1cm,scale=0.45]
\clip(-2.04,-9.82) rectangle (12.96,2.5);
\draw [<-,line width=0.9pt] (2.78,0.7740115370177616) -- (6.58,0.7740115370177616);
\draw [<-,line width=0.9pt] (6.58,0.7740115370177616) -- (9.26700576850888,-1.9129942314911195);
\draw [<-,line width=0.9pt] (9.26700576850888,-1.9129942314911195) -- (9.26700576850888,-5.71299423149112);
\draw [<-,line width=0.9pt] (9.26700576850888,-5.71299423149112) -- (6.58,-8.4);
\draw [<-,line width=0.9pt] (6.58,-8.4) -- (2.78,-8.4);
\draw [<-,line width=0.9pt] (2.78,-8.4) -- (0.09299423149111874,-5.712994231491119);
\draw [<-,line width=0.9pt] (0.09299423149111874,-5.712994231491119) -- (0.09299423149111874,-1.9129942314911186);
\draw [<-,line width=0.9pt] (0.09299423149111874,-1.9129942314911186) -- (2.78,0.7740115370177616);
\draw [<-,line width=0.9pt] (4.7,-5.36) -- (6.26,-3.78);
\draw [<-,line width=0.9pt] (6.26,-3.78) -- (4.68,-2.22);
\draw [<-,line width=0.9pt] (4.68,-2.22) -- (3.12,-3.8);
\draw [<-,line width=0.9pt] (3.12,-3.8) -- (4.7,-5.36);
\draw [line width=0.9pt,dashed] (1.5183852275020084,-0.48760323548022977)-- (3.8912749660375177,-3.0188368933722574);
\draw [line width=0.9pt,dashed] (7.906772753882146,-0.5527612168643847)-- (5.448923895135416,-2.9791906812729434);
\draw [line width=0.9pt,dashed] (7.945767112011651,-7.03423288798835)-- (5.477091655345839,-4.572945631124085);
\draw [line width=0.9pt,dashed] (1.4817488760843414,-7.101748876084342)-- (3.894586813885597,-4.564781917507299);
\begin{scriptsize}

\draw[color=black] (2.08,-8.45) node {$\mathfrak{a}_{v_2}$};

\draw[color=black] (7.44,-8.45) node {$\mathfrak{a}_{v_1}$};

\draw[color=uuuuuu] (10,-5.43) node {$\mathfrak{a}^\vee_{v_1}$};

\draw[color=uuuuuu] (9.84,-1.53) node {$\mathfrak{a}^\vee_{v_2}$};

\draw[color=uuuuuu] (6.74,1.21) node {$\mathfrak{a}_{v_2}$};

\draw[color=uuuuuu] (2.94,1.21) node {${\mathfrak{a}}_{v_1}$};

\draw[color=uuuuuu] (-0.44,-1.37) node {$\mathfrak{a}^\vee_{v_1}$};

\draw[color=uuuuuu] (-0.5,-5.45) node {$\mathfrak{a}^\vee_{v_2}$};

\draw[color=black] (4.84,-1.79) node {$\mathfrak{a}_{v_3}$};

\draw[color=black] (2.54,-3.51) node {$\mathfrak{a}^\vee_{v_3}$};

\draw[color=uuuuuu] (5,-5.81) node {$\mathfrak{a}_{v_3}$};

\draw[color=uuuuuu] (7.1,-3.7) node {$\mathfrak{a}^\vee_{v_3}$};
\draw[color=black] (4.74,1.2) node {$H_2$};
\draw[color=black] (8.3,0) node {$H_1$};
\draw[color=black] (10.1,-3.57) node {$H_2^{-1}$};
\draw[color=black] (8.52,-7.2) node {$H_1$};
\draw[color=black] (4.76,-8.9) node {$H_2$};
\draw[color=black] (0.96,-7.25) node {$H_1$};
\draw[color=black] (-0.8,-3.69) node {$H_2^{-1}$};
\draw[color=black] (1.08,-0.07) node {$H_1$};
\draw[color=black] (6.42,-4.61) node {$H_3$};
\draw[color=black] (6.34,-2.9) node {$H_3$};
\draw[color=black] (3.8,-2.1) node {$H_3$};
\draw[color=black] (3,-4.6) node {$H_3$};

\draw[color=black] (4.76,-0.61) node {$\widetilde{\Sigma} $};
\end{scriptsize}
\end{tikzpicture}\caption{A decorated surface which induces a double Poisson groupoid structure on $\mathfrak{M}_G(\widetilde{\Sigma},\widetilde{V})_{\widetilde{H},\widetilde{\mathcal{A}}} $} \label{fig:doupoigro}\end{center} \end{figure} 
\subsubsection{Double Lie and Poisson groupoids} A groupoid object in the category of Lie groupoids is denoted by a diagram of the following kind
\[ \xymatrix@-0.4pc{ \mathcal{G}\ar@<-.5ex>[r] \ar@<.5ex>[r]\ar@<-.5ex>[d] \ar@<.5ex>[d]& \mathcal{H} \ar@<-.5ex>[d] \ar@<.5ex>[d] \\ \mathcal{K} \ar@<-.5ex>[r] \ar@<.5ex>[r] & M, }\]  
where each of the sides represents a groupoid structure and the structure maps of $\mathcal{G} $ over $\mathcal{H}$ are groupoid morphisms with respect to $\mathcal{G} \rightrightarrows K$ and $\mathcal{H}  \rightrightarrows M$. 
\begin{defi}[\cite{browmac,macdou}] A groupoid object in the category of Lie groupoids as in the previous diagram is a {\em double Lie groupoid} if the double source map $(\mathtt{s}^h,\mathtt{s}^v):  \mathcal{G} \rightarrow \mathcal{H}  \times_M \mathcal{K}  $ is a submersion, the superindices $\quad^h,\quad^v$ denote the groupoid structures $\mathcal{G} \rightrightarrows \mathcal{H}$, $\mathcal{G} \rightrightarrows \mathcal{K}$ which are often called {\em horizontal} and {\em vertical} respectively. \end{defi}
Originally, the double source map was also required to be surjective but some of the examples that we shall encounter do not need to satisfy that condition, see \cite[Def. 2.2.1]{morla}.
\begin{exa} Let $G \rightrightarrows M$ be a Lie groupoid. Then the pair groupoid $G \times G$ is a double Lie groupoid with sides $M \times M$ and $G$ over $M$. \end{exa} 
\begin{exa} A Lie 2-group is a double Lie groupoid $\mathcal{G} $ with sides $\mathcal{H} $ and a point. So $\mathcal{G} $ and $\mathcal{H}$ are also Lie groups. A Lie 2-group can be equivalently described as a group object in the category of Lie groupoids. \end{exa} 
If $\mathcal{G} $ is a double Lie groupoid with sides $\mathcal{H}$ and $\mathcal{K}$ over $M$, we denote by $A^{\mathcal{K} }$ the Lie algebroid of $\mathcal{G} \rightrightarrows \mathcal{K}$ and by $A^{\mathcal{H} }$ the Lie algebroid of $\mathcal{G} \rightrightarrows \mathcal{H}$. By functoriality, $A^{\mathcal{K} } \rightrightarrows  A_{\mathcal{H} }$ is a VB-groupoid in which the structure maps are Lie algebroid morphisms and hence it is an {\em LA-groupoid} \cite{macdou}. 

Multiplicative Dirac structures are natural examples of LA-groupoids. The LA-groupoids induced by symmetric decorations of a surface $(\widehat{\Sigma},\widehat{V})$ as in \S \ref{subsec:symdec} naturally determine double Lie groupoids, see \S\ref{app:dougro}.

\begin{defi}[\cite{macdousym}] A double Lie groupoid $\mathcal{G} $ with sides $\mathcal{K}$ and $\mathcal{H}$ over $M$ is a {\em double Poisson groupoid} if there is a Poisson structure on $\mathcal{G} $ making it into a Poisson groupoid over both $K$ and $H$. \end{defi}
\begin{exa}[\cite{luwei2}] A {\em double symplectic groupoid} is a double Poisson groupoid whose Poisson bracket is nondegenerate. \end{exa} 
\begin{exa}[\cite{poi2gro}] A double Poisson groupoid structure on a Lie 2-group is called a {\em Poisson 2-group}. \end{exa}


\subsubsection{The double gluing construction} Let $(\Sigma,V)$ be a marked surface with boundary graph $\Gamma =(E,V)$ as in \S \ref{sec:quisur}. Let $S,T\subset E$ be two disjoint subsets such that 
\[ V_{0,0}=\{v\in V|\text{$v$ not adjacent to an edge in $S\cup T$} \}\neq \emptyset \] 
and $(\Sigma,V_{0,0})$ is a marked surface again. Consider the surfaces $\widehat{\Sigma}_S=\Sigma\cup_S \Sigma$ and $\widehat{\Sigma}_T=\Sigma\cup_T \Sigma$. Just as in \S \ref{subsec:symdec}, these are marked surfaces $(\widehat{\Sigma}_S,\widehat{V}_S)$, $(\widehat{\Sigma}_T,\widehat{V}_T)$ with the marked points coming from the corresponding inclusions of the two copies of $V$ after removing the points which are adjacent to edges in $S$ and in $T$, respectively. Let $\widehat{\Gamma  }_S=(\widehat{E}_S,\widehat{V}_S)$ and $\widehat{\Gamma  }_T=(\widehat{E}_T,\widehat{V}_T)$ be the corresponding boundary graphs of $(\widehat{\Sigma}_T,\widehat{V}_T)$ and $(\widehat{\Sigma}_T,\widehat{V}_T)$. The inclusions of the two copies of $\Sigma$ in $\widehat{\Sigma}_S$ (respectively, in $\widehat{\Sigma}_T$) restricted to the edges in $T$ (respectively, in $S$) induce two families of edges $\widehat{T} \hookrightarrow \widehat{E}_S $, $\widehat{S} \hookrightarrow \widehat{E}_T$. So we can consider the following gluing of four copies of $\Sigma$: 
\begin{align} \widetilde{\Sigma} =\widehat{\Sigma  }_S\cup_{\widehat{T} } \widehat{\Sigma  }_S=\widehat{\Sigma  }_T\cup_{\widehat{S} } \widehat{\Sigma  }_T, 
\label{eq:surdougro} \end{align}
see Figure \ref{fig:doupoigro}. Let $\widetilde{V}$ be the set of  points obtained by considering the image of the four copies of $V$ included in $\widetilde{\Sigma} $ and removing from it the vertices which are adjacent to edges either in $S$ or in $T$. Let us suppose that $V$ is such that $(\widetilde{\Sigma},\widetilde{V})$ is a fully marked surface as in \S \ref{subsec:symdec}. Now let us take a decoration $(\widetilde{H}, \widetilde{\mathcal{A} })$ of the boundary graph of $(\widetilde{\Sigma},\widetilde{V} ) $ which is symmetric with respect to both of its decompositions in \eqref{eq:surdougro} as a double surface, see Figure \ref{fig:doupoigro}. Notice that the decoration $(\widetilde{H}, \widetilde{\mathcal{A} } )$ induces decorations $(\check{H}_S,\check{\mathcal{A} }_S)$, $(\check{H}_T,\check{\mathcal{A}  }_T)$ of the boundary graphs associated to the sets of vertices 
\begin{align*}  &\widehat{V}_S^0=\{v\in \widehat{V}_S |\text{$v$ is not adjacent to an edge in $\widehat{T} $} \}\subset \partial \widehat{\Sigma}_S \\
&\widehat{V}_T^0=\{v\in \widehat{V}_T |\text{$v$ is not adjacent to an edge in $\widehat{S} $} \}\subset \partial \widehat{\Sigma}_T \end{align*}  
as in Proposition \ref{rem:baspoigro}. Both of these decorations induce in their turn the same decoration $(\check{H},\check{\mathcal{A}})$ of $(\Sigma,V_{0,0})$ as in Proposition \ref{rem:baspoigro} again. As an application of Theorem \ref{thm:poigro}, we get the following result.
\begin{thm}\label{thm:doupoigro} Suppose that $\mathfrak{M}_G(\widetilde{\Sigma},\widetilde{V})_{\widetilde{H},\widetilde{\mathcal{A} }} $ is smooth. Then there is a double Lie groupoid structure on $\mathfrak{M}_G(\widetilde{\Sigma},\widetilde{V})_{\widetilde{H},\widetilde{\mathcal{A}}} $ with sides $\mathfrak{M}_G(\widehat{\Sigma}_S,\widehat{V}_S^0)_{\check{H}_S,\check{\mathcal{A} }_S }$ and $\mathfrak{M}_G(\widehat{\Sigma}_T,\widehat{V}_T^0)_{\check{H}_T,\check{\mathcal{A}}_T }$ over $\mathfrak{M}_G(\Sigma,V_{0,0})_{\check{H},\check{\mathcal{A} }}$  which together with its canonical Poisson structure makes it into a double Poisson groupoid. \end{thm}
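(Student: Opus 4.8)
The plan is to obtain the two groupoid structures from two separate applications of Theorem~\ref{thm:poigro}, one for each decomposition of $\widetilde{\Sigma}$ recorded in \eqref{eq:surdougro}, and then to verify that they fit together into a double Lie groupoid. Viewing $\widetilde{\Sigma}=\widehat{\Sigma}_S\cup_{\widehat{T}}\widehat{\Sigma}_S$ and applying Theorem~\ref{thm:poigro} with base surface $\widehat{\Sigma}_S$ and gluing set $\widehat{T}$ produces a Poisson groupoid
\[ \mathfrak{M}_G(\widetilde{\Sigma},\widetilde{V})_{\widetilde{H},\widetilde{L}}\rightrightarrows \mathfrak{M}_G(\widehat{\Sigma}_S,\widehat{V}_S)_{\check{H}_S,\check{L}_S}; \]
symmetrically, viewing $\widetilde{\Sigma}=\widehat{\Sigma}_T\cup_{\widehat{S}}\widehat{\Sigma}_T$ yields a Poisson groupoid over $\mathfrak{M}_G(\widehat{\Sigma}_T,\widehat{V}_T)_{\check{H}_T,\check{L}_T}$. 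Both constructions equip $\mathfrak{M}_G(\widetilde{\Sigma},\widetilde{V})_{\widetilde{H},\widetilde{L}}$ with its canonical Poisson structure $\widetilde{\pi}$, since that structure depends only on the decoration $(\widetilde{H},\widetilde{L})$ and not on the chosen decomposition. It thus remains to show that the two groupoid structures assemble into a single double Lie groupoid.

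I would next realize both structures inside one ambient double Lie groupoid. Write $M=\hom(\Pi_1(\Sigma,V),G)$ with moment map $\mu$, and index the four copies of $\Sigma$ inside $\widetilde{\Sigma}$ by $(i,j)\in\{1,2\}\times\{1,2\}$, so that the $S$-identification relates copies differing in the first index and the $T$-identification relates copies differing in the second. Then $M^4$ carries two commuting pair-groupoid structures, pairing respectively along the first and along the second index, which exhibit it as the double pair groupoid of $M$, namely the pair groupoid $(M\times M)\times(M\times M)$ of the pair groupoid $M\times M\rightrightarrows M$. By van Kampen's theorem applied to both decompositions, $\mathcal{M}=\hom(\Pi_1(\widetilde{\Sigma},\widetilde{V}),G)$ embeds as a sub-double-groupoid of $M^4$: agreement of the two $S$-related copies along $S$ is a subgroupoid condition for the first-index pair groupoid, agreement of the two $T$-related copies along $T$ is one for the second-index pair groupoid, and the two conditions are mutually compatible. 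The groupoid over $\mathfrak{M}_G(\widehat{\Sigma}_S,\widehat{V}_S)$ is then induced by the second-index pair groupoid and the one over $\mathfrak{M}_G(\widehat{\Sigma}_T,\widehat{V}_T)$ by the first.

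The heart of the argument, and the step I expect to be the main obstacle, is to show that the constraint submanifold and the gauge reduction of the proof of Theorem~\ref{thm:poigro} respect \emph{both} pair-groupoid structures at once, so that the quotient is a genuine double Lie groupoid and not merely a manifold carrying two unrelated groupoid structures. This is exactly where the hypothesis that $(\widetilde{H},\widetilde{L})$ is symmetric with respect to both decompositions enters. Following Step~1 of that proof, the boundary data produce a constraint $\widetilde{\mathcal{H}}\subset(G^E)^4$ and a gauge group $\widetilde{\mathcal{K}}\subset(G^V)^4$; the diagonal (subgroup) conditions imposed along the edges of $S$ and $T$ and along their adjacent vertices are simultaneously subgroupoid conditions for both pair-groupoid structures, so $(\mu,\mu,\mu,\mu)^{-1}(\widetilde{\mathcal{H}})$ is a sub-double-groupoid of $M^4$ and $\widetilde{\mathcal{K}}$ is a sub-double-groupoid of the double pair groupoid $(G^V)^4$ of $G^V$. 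Since $\widetilde{\mathcal{K}}$ acts freely and properly by the smoothness hypothesis, and by automorphisms of each groupoid structure, the quotient $(\mu,\mu,\mu,\mu)^{-1}(\widetilde{\mathcal{H}})/\widetilde{\mathcal{K}}\cong\mathfrak{M}_G(\widetilde{\Sigma},\widetilde{V})_{\widetilde{H},\widetilde{L}}$ inherits a double Lie groupoid structure whose structure maps for each side are morphisms for the other, with the stated sides over the common base $\mathfrak{M}_G(\Sigma,V)_{\check{H},\check{L}}$ identified by Remark~\ref{rem:baspoigro}.

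Finally, combining this double Lie groupoid with the two Poisson groupoid structures of the first paragraph finishes the proof: the single canonical Poisson structure $\widetilde{\pi}$ makes $\mathfrak{M}_G(\widetilde{\Sigma},\widetilde{V})_{\widetilde{H},\widetilde{L}}$ into a Poisson groupoid over each of its two sides, which is precisely what it means to be a double Poisson groupoid.
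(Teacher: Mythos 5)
Your proposal is correct and follows essentially the same route as the paper's proof: two applications of Theorem~\ref{thm:poigro} for the two decompositions, realization of the moduli space as a subquotient of the double pair groupoid $M^4$, and verification that the constraint $\mathcal{H}$ and the gauge group $\mathcal{K}$ are double Lie subgroupoids acting compatibly so that the quotient inherits both structures simultaneously. The only cosmetic difference is that the paper organizes the verification by explicitly listing the four double-groupoid types of the factors $\mathcal{H}_e$ from \eqref{eq:holgro} and by viewing $G^{4V}$ as a group object whose action on $M^4$ is a double Lie groupoid morphism, which is what you describe as acting ``by automorphisms of each groupoid structure.''
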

\begin{proof} Since Theorem \ref{thm:poigro} gives us Poisson groupoid structures over both 
\[ \mathfrak{M}_G(\widetilde{\Sigma},\widetilde{V})_{\widetilde{H},\widetilde{\mathcal{A}}} \rightrightarrows \mathfrak{M}_G(\widehat{\Sigma}_S,\widehat{V}_S^0)_{\check{H}_S,\check{\mathcal{A}}_S }, \quad \mathfrak{M}_G(\widetilde{\Sigma},\widetilde{V})_{\widetilde{H},\widetilde{\mathcal{A}}} \rightrightarrows \mathfrak{M}_G(\widehat{\Sigma}_T,\widehat{V}_T^0)_{\check{H}_T,\check{\mathcal{A}}_T }, \] 
we just have to check that they are compatible in the sense of a double Lie groupoid structure. Let us put $M=\hom(\Pi_1(\Sigma,V),G)$ and let $\mu:M \rightarrow G^E$ be the moment map as in \S \ref{sec:quisur}. We have that the moment map $\mu^4:M^4 \rightarrow G^{4E}$ is a morphism of double Lie groupoids between the pair groupoid $M^4$ with both sides equal to $M^2$ over $M$ and the pair groupoid $G^{4E}$ with both sides equal to $G^{2E}$ over $G^E$. So the multiplications on $M^4$ are given by
\begin{align*} &\mathtt{m}^{h}((a,b,c,d),(c,d,a',b'))= (a,b,a',b') \\
&\mathtt{m}^{v}((a,b,c,d),(b,x,d,y))= (a,x,c,y) \end{align*} 
for all $a,b,c,d,a',b',x,y\in M$ and analogous formulae hold for $G^{4E}$. Let us adopt the notation that we used in \S \ref{subsec:quoid} and in Proposition \ref{pro:quoid}. First of all, we have to verify that the subgroupoid $\mathcal{H} \hookrightarrow G^{4E}$ is in fact a double Lie subgroupoid. But we can see that so is each of the factors $\mathcal{H}_e$ and hence their product is a double Lie groupoid. Indeed, the double Lie groupoids that we get are the following in each of the respective cases of \eqref{eq:holgro}:
\[ \xymatrix@-1pc{ H^{4}_e \ar@<-.5ex>[r] \ar@<.5ex>[r]\ar@<-.5ex>[d] \ar@<.5ex>[d]& H_e^{2}\ar@<-.5ex>[d] \ar@<.5ex>[d] \\ H_e^{2} \ar@<-.5ex>[r] \ar@<.5ex>[r] & H_e, } \quad \xymatrix@-1pc{ G_\Delta \times G_\Delta \ar@<-.5ex>[r] \ar@<.5ex>[r]\ar@<-.5ex>[d] \ar@<.5ex>[d]& G_{\Delta}\ar@<-.5ex>[d] \ar@<.5ex>[d] \\ G \times G \ar@<-.5ex>[r] \ar@<.5ex>[r] & G, } \quad \xymatrix@-1pc{ (H_e \times G )\times (H_e \times G)\ar@<-.5ex>[r] \ar@<.5ex>[r]\ar@<-.5ex>[d] \ar@<.5ex>[d]&H_e \times G \ar@<-.5ex>[d] \ar@<.5ex>[d] \\ G \times G \ar@<-.5ex>[r] \ar@<.5ex>[r] & G, }\]
\[\xymatrix@-1pc{ (G \times H_e) \times (G \times H_e) \ar@<-.5ex>[r] \ar@<.5ex>[r]\ar@<-.5ex>[d] \ar@<.5ex>[d]& G \times H_e\ar@<-.5ex>[d] \ar@<.5ex>[d] \\ G \times G \ar@<-.5ex>[r] \ar@<.5ex>[r] & G; }\]
where each of the horizontal groupoid structures is a pair groupoid groupoid structure and the vertical ones are product groupoid structures. We get an analogous description of the corresponding $\mathcal{H}_e $ if $e$ lies in $T$ or is adjacent to an edge in $T$. Then $(\mu^4)^{-1}(\mathcal{H} )$ is also a double Lie groupoid. 

Secondly, we have to check that the action of $\mathcal{K} $ on $(\mu^4)^{-1}(\mathcal{H} )$ induces a double Lie groupoid structure on the quotient. Notice that this action is the restriction of the action of the gauge group which is both a double Lie groupoid, seen as a pair groupoid in two different ways:
\[ \xymatrix@-0.9pc{ G^{4V} \ar@<-.5ex>[r] \ar@<.5ex>[r]\ar@<-.5ex>[d] \ar@<.5ex>[d]& G^{2V}\ar@<-.5ex>[d] \ar@<.5ex>[d] \\ G^{2V} \ar@<-.5ex>[r] \ar@<.5ex>[r] & G^V, }\]      
and it is also a group and all its double Lie groupoid structure maps are Lie group morphisms. The $G^{4V}$-action \eqref{eq:gauact} on $M^4$ is a double Lie groupoid morphism with respect to the structures previously mentioned and then, when restricted to a double Lie subgroupoid which acts freely, it induces a double Lie groupoid structure on the quotient. So it only remains to verify that $\mathcal{K}$ is a double Lie subgroupoid of $G^{4V}$ but this is straightforward. \end{proof}
\begin{rema} The key point of the previous argument is that $G^{4V}$ can be seen as a {\em triple Lie groupoid} and then, when it acts freely on a double Lie groupoid by a double Lie groupoid morphism (which is then a categorified {\em morphic action} \cite{morla}), it induces a double Lie groupoid structure on the quotient. \end{rema} 
\begin{coro} In the situation of Theorem \ref{thm:doupoigro}, if the Poisson structure on $\mathfrak{M}_G(\widetilde{\Sigma},\widetilde{V})_{\widetilde{H},\widetilde{\mathcal{A}}} $ is symplectic, then it is a double symplectic groupoid. \qed\end{coro}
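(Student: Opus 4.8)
The plan is to read this off directly from Theorem~\ref{thm:doupoigro} together with the definitions, so the proof will be essentially a bookkeeping argument rather than a new construction. By Theorem~\ref{thm:doupoigro} the space $\mathfrak{M}_G(\widetilde{\Sigma},\widetilde{V})_{\widetilde{H},\widetilde{L}}$ already carries a \emph{single} Poisson bivector $\pi$ that is simultaneously multiplicative for the two groupoid structures, namely the one over $\mathfrak{M}_G(\widehat{\Sigma}_S,\widehat{V}_S)_{\check{H}_S,\check{L}_S}$ and the one over $\mathfrak{M}_G(\widehat{\Sigma}_T,\widehat{V}_T)_{\check{H}_T,\check{L}_T}$, and this bivector makes it into a Poisson groupoid over each of the two sides. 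In other words, the data of a double Poisson groupoid, with this common $\pi$, is already in hand, and the Corollary only has to promote ``Poisson'' to ``symplectic'' under the nondegeneracy hypothesis.

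First I would recall the elementary bridge between the two definitions given in the preliminaries: a Poisson groupoid whose Poisson structure is nondegenerate is the same thing as a symplectic groupoid. Indeed, for a Poisson groupoid the graph of the multiplication $\mathtt{m}$ is a coisotropic submanifold of $G\times G\times\overline{G}$; once $\pi$ is nondegenerate, with $\omega=\pi^{-1}$, a coisotropic submanifold of that dimension is automatically Lagrangian for $\text{pr}_1^*\omega+\text{pr}_2^*\omega-\text{pr}_3^*\omega$, which is exactly the condition defining a symplectic groupoid. I would then apply this observation separately to each of the two Poisson groupoid structures produced by Theorem~\ref{thm:doupoigro}.

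Since the hypothesis is that the common bivector $\pi$ on $\mathfrak{M}_G(\widetilde{\Sigma},\widetilde{V})_{\widetilde{H},\widetilde{L}}$ is symplectic, the same nondegenerate form $\omega=\pi^{-1}$ simultaneously makes the graph of each of the two multiplications Lagrangian. Hence $\mathfrak{M}_G(\widetilde{\Sigma},\widetilde{V})_{\widetilde{H},\widetilde{L}}$ is a symplectic groupoid over each side at once, with one and the same symplectic form. By the definition of a double symplectic groupoid as a double Poisson groupoid whose Poisson bracket is nondegenerate, this is precisely the claim. The only genuine point, and it is immediate, is that nondegeneracy of the single shared bivector upgrades \emph{both} Poisson groupoid structures to symplectic ones simultaneously; there is no additional compatibility to verify beyond what Theorem~\ref{thm:doupoigro} already supplies, so I expect no real obstacle here.
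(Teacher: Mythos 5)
Your argument is correct and is essentially the paper's own: the paper states this corollary with no proof at all, since by the definition recorded in Example~\ref{exa:dousym2}'s preceding definitions a double symplectic groupoid is precisely a double Poisson groupoid with nondegenerate bracket, so the claim follows immediately once Theorem~\ref{thm:doupoigro} supplies the double Poisson groupoid structure. Your additional remark unpacking why nondegeneracy upgrades each coisotropic multiplication graph to a Lagrangian one is exactly the equivalence the paper already records in its preliminaries on symplectic groupoids, so no new content is needed.
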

\begin{exa}\label{exa:dousym2} Let us consider $\Sigma$ and $\mathfrak{g} $ as in Example \ref{exa:dousym} and take $S=\{e_3\}$, $T=\{e_4\}$. We can sew four copies of $\Sigma$ to obtain the square $\widetilde{\Sigma}$ as in Figure \ref{fig:dousymsqu}. Since we want a doubly symmetric decoration of $\partial \widetilde{\Sigma}$, it is enough to decorate $v_1$ with $\mathfrak{a}_{v_1}=\mathfrak{k} \oplus \mathfrak{k}^* $ and the edges as follows: $H_1=K$, $H_2=K^*$. Then we have that $\mathfrak{M}_G(\widetilde{\Sigma},\widetilde{V})_{\widetilde{H},\widetilde{\mathcal{A}}} $ is a double symplectic groupoid. Let us see an explicit isomorphism with the following double symplectic groupoid which was introduced in \cite{luwei2}. Let us define the double Lie groupoid
\[ \mathcal{G}=\{(w,x,y,z)\in K^* \times K \times K^* \times K:xy=wz\} \] 
with sides $K$ and $K^*$ over a point in which the source and target maps are the respective projections and the two multiplications are defined as
\[ \mathtt{m}_i((w,x,y,z),(w',x',y',z'))= \begin{cases} (w,xx',y',zz'),\quad \text{if $i=1$ and $y=w'$,} \\
(ww',x,yy',z'),\quad \text{if $i=2$ and $z=x'$}. \end{cases}  \]
Following the notation in the proof of Theorem \ref{thm:doupoigro}, let us put $M=\hom(\Pi_1(\Sigma,V),G)$. Then we have that 
\[ (\mu^4)^{-1}(\mathcal{H})=\left\{(a_i,b_i,c_i,d_i)\in M^4\left|\begin{aligned} &a_4=b_4, \, c_4=d_4, \, a_3=c_3,\, b_3=d_3, \\
 &b_1a_1^{-1},c_1d^{-1}_1\in K^*,\, a_2^{-1}c_2,d_2^{-1}b_2\in K \end{aligned} \right. \right\} \]
and the map $[a_i,b_i,c_i,d_i] \mapsto (a_1b_1^{-1},a_2^{-1}c_2,c_1d_1^{-1},b_2d_2^{-1})$ defines the desired double Lie groupoid isomorphism $\mathfrak{M}_G(\widetilde{\Sigma},\widetilde{V})_{\widetilde{H},\widetilde{\mathcal{A}}}=(\mu^4)^{-1}(\mathcal{H})/\mathcal{K}\cong \mathcal{G} $. So we have that the double symplectic groupoid structure on $\mathcal{G} $ arises as a subquotient of the double pair groupoid $M^4$. \end{exa}
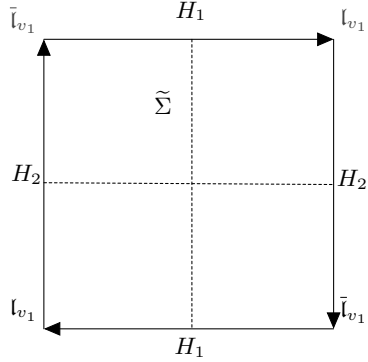
\begin{figure} \begin{center}
\definecolor{uuuuuu}{rgb}{0.26666666666666666,0.26666666666666666,0.26666666666666666}
\begin{tikzpicture}[line cap=round,line join=round,>=stealth,x=1cm,y=1cm,scale=0.4]
\clip(-4.3,-5) rectangle (11.7,4.5);
\draw [<-,line width=0.9pt] (0.44,2.64) -- (6.86,2.64);
\draw [<-,line width=0.9pt] (6.86,2.64) -- (6.86,-3.78);
\draw [<-,line width=0.9pt] (6.86,-3.78) -- (0.44,-3.78);
\draw [<-,line width=0.9pt] (0.44,-3.78) -- (0.44,2.64);
\draw [line width=0.9pt,dashed] (3.72,2.64)-- (3.72,-3.78);
\draw [line width=0.9pt,dashed] (0.44,-0.54)-- (6.86,-0.58);
\begin{scriptsize}
;
\draw[color=black] (-0.2,-3.35) node {${\mathfrak{a}}_{v_1}$};

\draw[color=black] (7.6,-3.35) node {$\mathfrak{a}^\vee_{v_1}$};

\draw[color=uuuuuu] (7.6,3.07) node {$\mathfrak{a}_{v_1}$};

\draw[color=uuuuuu] (-0.2,3.07) node {$\mathfrak{a}^\vee_{v_1}$};
\draw[color=black] (3.7,3.27) node {$H_1$};
\draw[color=black] (7.5,-0.47) node {$H_2$};
\draw[color=black] (3.68,-4.19) node {$H_1$};
\draw[color=black] (-0.2,-0.35) node {$H_2$};
\draw[color=black] (3.08,0.6) node {$\widetilde{\Sigma}$};

\end{scriptsize}
\end{tikzpicture} \caption{A decorated square which induces a double Poisson structure on $\mathfrak{M}_G(\widetilde{\Sigma},\widetilde{V})_{\widetilde{H},\widetilde{\mathcal{A}}} $} \label{fig:dousymsqu}
\end{center}   \end{figure} 
\subsubsection{Double quasi-Poisson groupoids} Now we shall see what is the geometric structure that naturally appears on top of the double Lie groupoid $M^4$ that we used in the proof of Theorem \ref{thm:doupoigro}. \begin{defi} A {\em double quasi-Poisson groupoid} is a double Lie groupoid $\mathcal{G} $ with sides $\mathcal{H}  $ and $\mathcal{K}  $ over $M$ which is equipped with a bivector field $\Pi\in \mathfrak{X}^2(\mathcal{G} )$ and $\Psi\in \Gamma (\wedge^3 A^{\mathcal{K}  })$ such that:
\begin{itemize} \item $\Pi$ is multiplicative over $\mathcal{H}  $ and over $\mathcal{K} $,
\item $[\Pi,\Pi]=\Psi^{\mathcal{R} }-\Psi^{\mathcal{L} }$ and $[\Pi,\Psi^{\mathcal{R} }]=0$, where $\quad^{\mathcal{R}}$ and $\quad^{\mathcal{L} }$ denote the right and left-invariant extensions with respect to $\mathcal{G} \rightrightarrows \mathcal{K} $;
\item $\Psi$ is multiplicative with respect to the VB-groupoid $\bigoplus^3_1 A^{\mathcal{K}  } \rightrightarrows \bigoplus^3_1 A_{\mathcal{H}  }$: i.e. the contraction map
\[ \overline{\Psi}: \left(\bigoplus^3_1 (A^{\mathcal{K}  })^* \rightrightarrows \bigoplus^3_1 C^*\right) \rightarrow \mathbb{R} \] is a groupoid morphism, where $C=\mathtt{u}_{\mathcal{K}  }^* \ker \mathtt{s}_{A^{\mathcal{K} }} $ is the core of the LA-groupoid $ A^{\mathcal{K} } \rightrightarrows A_{\mathcal{H} }$. \end{itemize}  \end{defi}
\begin{rema} Notice that the fact that $\Psi$ is multiplicative implies immediately that $\Psi^{\mathcal{R} }-\Psi^{\mathcal{L} }$ is also multiplicative with respect to $\mathcal{G} \rightrightarrows \mathcal{H} $. \end{rema} 
The double pair groupoid $M^4$ that appears in the proof of Theorem \ref{thm:doupoigro} is naturally endowed with the bivector field 
\[ (\pi_{\Sigma,V},-\pi_{\Sigma,V},-\pi_{\Sigma,V},\pi_{\Sigma,V}) \] 
which is multiplicative with respect to both groupoid structures, see \S \ref{subsec:quapoi}. Besides, the multivector field $([\pi_{\Sigma,V},\pi_{\Sigma,V}],[\pi_{\Sigma,V},\pi_{\Sigma,V}])$ is multiplicative with respect to any of the side pair groupoids $M \times M \rightrightarrows M$ so $M^4$ is a double quasi-Poisson groupoid in two different ways.   

\subsection{Double Poissson groupoids obtained by categorifying the structure group}\label{subsec:catgro}
\subsubsection{Lie 2-groups, Lie 2-algebras and crossed modules} Now we shall see an alternative description for Lie 2-groups which allows us to construct a number of examples.
\begin{defi} A {\em Lie crossed module} consists of a morphism of Lie groups $\Phi:H\rightarrow G$ and an action of $G$ on $H$ by automorphisms such that for all $g\in G$, and $h,k\in H$:
\begin{align} 
\Phi(g\cdot h)&=g\Phi(h)g^{-1} \label{cm1} \\
\Phi(h)\cdot k&=hkh^{-1}. \label{cm2}
\end{align} \end{defi}
A Lie crossed module $\Phi:H\rightarrow G$ induces an action groupoid $H \times G \rightrightarrows G$ with action map $(a,x)\mapsto \Phi(a)x$. The semidirect product structure on $H \rtimes G$ makes this action groupoid into a Lie group and all its structure maps as a groupoid are group morphisms with respect to this group structure. All the Lie 2-groups are of this form, see \cite{brsp}. For instance, the pair groupoid over a Lie group $G$ is a Lie 2-group whose corresponding Lie crossed module is the identity on $G$. Another fundamental example is the {\em automorphism Lie 2-group} of a Lie group $G$ which corresponds to the Lie crossed module $\Phi:G \rightarrow \text{Aut}(G)$ given by $\Phi(a)(b)=aba^{-1}$ for all $a,b\in G$ and the tautological action of $\text{Aut}(G)$ on $G$.
\begin{defi}[\cite{bacr}] A groupoid in the category of Lie algebras is called a {\em (strict) Lie 2-algebra}. An abelian Lie 2-algebra is called a {\em 2-vector space}. \end{defi}

It is straightforward to see that the Lie functor establishes an equivalence of categories between the categories of 1-connected Lie 2-groups and Lie 2-algebras.

Lie 2-algebras are equivalent to crossed modules of Lie algebras.
\begin{defi}[\cite{bacr}]\label{def:difcromod} Let $\phi:\mathfrak{h}\rightarrow\mathfrak{g}$ be a Lie algebra morphism and let $\mathfrak{g}$ act on $\mathfrak{h}$ by derivations in such a way that for all $x\in \mathfrak{g}$ and $a,b\in\mathfrak{h}$ we have that
\begin{align*} 
\phi(x\cdot a)&=[x,\phi(a)] \\
\phi(a)\cdot b&=[a,b]; 
\end{align*}
then this structure is called a {\em differential crossed module or a crossed module of Lie algebras}. The Lie 2-algebra associated to $\phi$ is the action groupoid $\mathfrak{G} :\mathfrak{h}\rtimes \mathfrak{g} \rightrightarrows \mathfrak{g}$ with action map $\mathtt{t} (a,x)=\phi(a)+x$, for $a\in \mathfrak{h}$, $x\in \mathfrak{g}$. Here $\mathfrak{h}\rtimes \mathfrak{g}$ is the semidirect product Lie algebra given by the $\mathfrak{g}$-action. \end{defi}
\subsubsection{The categorified $\Gamma $-twisted Cartan-Dirac structure}\label{subsec:multwidir} A Lie algebra $\mathfrak{g}$ equipped with $s\in S^2(\mathfrak{g})$ which is $\mathfrak{g}$-invariant but possibly degenerate as a bilinear form on $\mathfrak{g}^*$ gives rise to a differential crossed module $s^\sharp:\mathfrak{g}^* \rightarrow \mathfrak{g}$ where the action of $\mathfrak{g}$ on $\mathfrak{g}^*$ is the infinitesimal coadjoint action: $(X,\alpha )\in \mathfrak{g} \times \mathfrak{g}^*$ $(X,\alpha)\mapsto -\ad^*_X \alpha $. The associated Lie 2-algebra $\mathfrak{G}:\mathfrak{g}^* \rtimes \mathfrak{g} \rightrightarrows \mathfrak{g} $ carries an Ad-invariant nondegenerate symmetric bilinear form (a metric for short) compatible with the groupoid structure:
\begin{align}\label{met}  (\alpha +X, \beta+ Y)\mapsto \mathfrak{s} (\alpha +X, \beta+ Y):=\langle  \alpha ,Y \rangle  + \langle  \beta, X \rangle +s(\alpha ,\beta ), \end{align}   
for all $\alpha ,\beta \in \mathfrak{g}^*$ and $X,Y\in \mathfrak{d}$. The compatibility means that the graph of the composition of $\mathfrak{G} $ (which is a Lie subalgebra) is Lagrangian in the Courant algebroid over a point $\mathfrak{G} \times \mathfrak{G}\times \overline{\mathfrak{G} }$. The Lie 2-algebra $\mathfrak{G} $ equipped with the metric $\mathfrak{s}  $ shall be called a {\em split quadratic Lie 2-algebra}. To summarize: isomorphism classes of split quadratic Lie 2-algebras are classified by Lie algebras $\mathfrak{g}$ endowed with $s\in S^2(\mathfrak{g})$ which is $\mathfrak{g}$-invariant. Split quadratic Lie 2-algebras are the same objects as CA-groupoids over a point (regarding the latter as the trivial groupoid). It was in this sense that this classification was obtained in \cite[Rem. 3.6]{liedir} (reformulating a remark in \cite[\S 3]{driquahop}). If $\mathcal{G} \rightrightarrows G$ is a Lie 2-group which integrates $\mathfrak{G} $, we can consider the $\Gamma $-twisted Cartan-Dirac structure $(\mathcal{E}_\Gamma ,\mathcal{L}_\Gamma )$ on $\mathcal{G}^E$ associated to a boundary graph $(E,V)$. Since $(\mathcal{E}_\Gamma ,\mathcal{L}_\Gamma )$ is defined in terms of maps which are all groupoid morphisms, we get the following result.
\begin{prop}\label{pro:catcardir} The $\Gamma $-twisted Cartan-Dirac structure $(\mathcal{E}_\Gamma ,\mathcal{L}_\Gamma )$ associated to the boundary graph $\Gamma =(E,V)$ of a marked surface is a multiplicative Manin pair over the Lie groupoid $\mathcal{G}^E \rightrightarrows G^E$. \end{prop}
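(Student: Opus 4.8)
The plan is to verify the three defining features of a multiplicative Manin pair in turn, exploiting that every ingredient in the construction of $(\mathbb{A}_\Gamma,L_\Gamma)$ — the metric $\overline{B}$ of \eqref{met} and the action \eqref{eq:infact} — is compatible with the groupoid structure of $\mathcal{G}$. Write $\mathfrak{D}=\mathfrak{G}\oplus\mathfrak{G}$ for the double of the split quadratic Lie $2$-algebra $\mathfrak{G}$, with metric $\overline{B}\ominus\overline{B}$; as a groupoid $\mathfrak{D}\rightrightarrows\mathfrak{d}$ this is again a split quadratic Lie $2$-algebra, since the graph of its composition is the product of the two graphs. Then $\mathbb{A}_\Gamma=\mathfrak{D}^V\times\mathcal{G}^E$ and its base object $A=\mathfrak{d}^V\times G^E$ — which is exactly the Courant algebroid underlying the ordinary $\Gamma$-twisted Cartan--Dirac structure for $\mathfrak{g}$ over $G^E$ — together form a groupoid $\mathbb{A}_\Gamma\rightrightarrows A$. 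First I would record that $\mathbb{A}_\Gamma\rightrightarrows A$ is a VB-groupoid over $\mathcal{G}^E\rightrightarrows G^E$: it is the product of the Lie $2$-algebra $\mathfrak{D}^V\rightrightarrows\mathfrak{d}^V$, viewed as a VB-groupoid over the point, with $\mathcal{G}^E\rightrightarrows G^E$, so its linear structure is the trivial bundle with fibre $\mathfrak{D}^V$ and all its groupoid structure maps are fibrewise linear.

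Next I would show that $\mathbb{A}_\Gamma\rightrightarrows A$ is a CA-groupoid, that is, that the graph of its multiplication is a Dirac structure in $\mathbb{A}_\Gamma\times\mathbb{A}_\Gamma\times\overline{\mathbb{A}_\Gamma}$ with support on the graph of the multiplication of $\mathcal{G}^E$. Lagrangianity of this graph is equivalent to multiplicativity of the metric, which reduces to the assertion that the graph of the composition of $\mathfrak{D}$ is Lagrangian inside $\mathfrak{D}\times\mathfrak{D}\times\overline{\mathfrak{D}}$; this is immediate from the split quadratic Lie $2$-algebra condition on $\mathfrak{G}$ applied to each of the two summands of $\mathfrak{D}$ and at each vertex $v\in V$. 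Involutivity with support reduces to multiplicativity of the action Courant bracket, which is assembled from the Lie bracket of $\mathfrak{D}^V$ — a groupoid morphism, since $\mathfrak{D}^V$ is a Lie $2$-algebra — and from the action \eqref{eq:infact}. Everything therefore hinges on the anchor: the action map $\mathfrak{D}^V\times\mathcal{G}^E\to T\mathcal{G}^E$ of \eqref{eq:infact} must be a VB-groupoid morphism into the tangent groupoid $T\mathcal{G}^E\rightrightarrows TG^E$.

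This is the main obstacle, and it is precisely where the hypothesis that $\mathcal{G}$ is a group object in the category of Lie groupoids enters. Since \eqref{eq:infact} is built from the left- and right-invariant vector fields $X^l$ and $Y^r$ attached to $X,Y\in\mathfrak{G}$, I would first prove that for each $\xi\in\mathfrak{G}$ the fields $\xi^l$ and $\xi^r$ are multiplicative vector fields on $\mathcal{G}\rightrightarrows G$, i.e.\ groupoid morphisms $\mathcal{G}\to T\mathcal{G}$ covering $(\mathtt{s}_*\xi)^l$ and $(\mathtt{t}_*\xi)^r$ on $G$. This follows from the interchange law relating the group multiplication and the groupoid composition of $\mathcal{G}$: because the group multiplication $\mathcal{G}\times\mathcal{G}\to\mathcal{G}$ is a groupoid morphism, its differential intertwines the tangent groupoid multiplication on $T\mathcal{G}$ with translation, which forces $\xi^l$ and $\xi^r$ to respect groupoid composition. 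Assembling these multiplicative fields over the edges according to $\mathtt{S},\mathtt{T}$ as in \eqref{eq:infact} then produces the required VB-groupoid morphism, completing the CA-groupoid axioms.

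Finally I would treat $L_\Gamma$. The diagonal embedding $\mathfrak{G}^V\hookrightarrow\mathfrak{D}^V$ is a Lagrangian sub-Lie-$2$-algebra — Lagrangian because the diagonal $\mathfrak{G}\hookrightarrow\mathfrak{G}\oplus\overline{\mathfrak{G}}$ is Lagrangian, and a subgroupoid because the diagonal is a groupoid morphism — so $L_\Gamma=\mathfrak{G}^V\times\mathcal{G}^E$ is at once a VB-subgroupoid of $\mathbb{A}_\Gamma\rightrightarrows A$ and a Dirac structure, its involutivity being inherited from the ordinary $\Gamma$-twisted Cartan--Dirac structure on each groupoid level. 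As the product of the Lie $2$-algebra $\mathfrak{G}^V$ with $\mathcal{G}^E$, it is an LA-groupoid over $\mathfrak{g}^V\times G^E$ whose structure maps are Lie algebroid morphisms. This exhibits $(\mathbb{A}_\Gamma,L_\Gamma)$ as a multiplicative Manin pair over $\mathcal{G}^E\rightrightarrows G^E$, as claimed.
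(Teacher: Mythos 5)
Your reduction of the whole problem to the statement that the action map \eqref{eq:infact} is a VB-groupoid morphism into $T\mathcal{G}^E \rightrightarrows TG^E$ is correct, and it is exactly the one non-obvious point the paper's own proof isolates; but the lemma you invoke to prove it is false. For a general $\xi\in\mathfrak{G}$ the invariant vector fields $\xi^l,\xi^r$ are \emph{not} multiplicative vector fields on $\mathcal{G}\rightrightarrows G$. A groupoid morphism $\mathcal{G}\to T\mathcal{G}$ must cover a single vector field on $G$, whereas $T\mathtt{s}\circ\xi^r=(\mathtt{s}_*\xi)^r\circ\mathtt{s}$ and $T\mathtt{t}\circ\xi^r=(\mathtt{t}_*\xi)^r\circ\mathtt{t}$, so multiplicativity already forces $\mathtt{s}_*\xi=\mathtt{t}_*\xi$. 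Concretely, take the pair groupoid 2-group $\mathcal{G}=G\times G\rightrightarrows G$, so $\mathfrak{G}=\mathfrak{g}\oplus\mathfrak{g}$: a vector field $(X,Y)$ on $G\times G$ is multiplicative for the pair groupoid precisely when $X=Y$, while $\xi^r=(\xi_1^r,\xi_2^r)$; hence $\xi^r$ is multiplicative iff $\xi_1=\xi_2$, i.e.\ iff $\xi$ lies in the image of the unit map $\mathfrak{g}\to\mathfrak{G}$. The interchange-law argument you sketch cannot repair this: the flow of $\xi^r$ is $L_{\exp(t\xi)}$, and for composable $a,b$ the elements $ga$ and $gb$ are composable only when $\mathtt{s}(g)=\mathtt{t}(g)$, in which case the interchange law gives $(ga)\cdot(gb)=(g\cdot g)(a\cdot b)$, which equals $g(a\cdot b)$ only when $g\cdot g=g$. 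Since \eqref{eq:infact} involves arbitrary elements $X_v\oplus Y_v\in\mathfrak{D}$, your involutivity step rests on a false statement.

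The correct statement is joint in the Lie 2-algebra variable and the groupoid variable: writing $\rho(\xi)_a\in T_a\mathcal{G}^E$ for the value of \eqref{eq:infact}, what must be shown is $\rho(\xi)_a\bullet\rho(\eta)_b=\rho(\xi\cdot\eta)_{a\cdot b}$ for pairs $\xi,\eta$ composable in $\mathfrak{D}^V\rightrightarrows\mathfrak{d}^V$ and $a,b$ composable in $\mathcal{G}^E$ — fixing $\xi$ and letting only $a$ vary is not a groupoid-compatible operation, which is precisely why your fixed-$\xi$ lemma fails. This is how the paper argues: the Courant bracket and anchor on constant sections are obtained by differentiating the global action of $\mathcal{G}^{2V}$ on $\mathcal{G}^E$, namely $(g_v,h_v)_{v\in V}\cdot(a_e)_{e\in E}=(g_{\mathtt{T}(e)}a_eh_{\mathtt{S}(e)}^{-1})_{e\in E}$, and this action is a Lie groupoid morphism because it is assembled from the 2-group multiplication and inversion of $\mathcal{G}$, which are groupoid morphisms. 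Applying the tangent functor to this morphism and restricting to $\mathfrak{D}^V\times 0_{\mathcal{G}^E}\subset T\mathcal{G}^{2V}\times T\mathcal{G}^E$ — legitimate because the group unit is a groupoid unit, so $\mathfrak{D}^V=T_1\mathcal{G}^{2V}$ with its Lie 2-algebra groupoid structure is the restriction of the tangent groupoid, and the zero section is a groupoid morphism — yields exactly the displayed identity and hence the involutivity of the graph of the multiplication. With this replacement your outline goes through; the remaining parts of your proposal (the VB-groupoid structure, Lagrangianity via the split quadratic condition, and the treatment of $L_\Gamma$) are correct and are indeed the parts the paper calls straightforward.
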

\begin{proof} This is straightforward except, possibly, for the fact that the graph of the multiplication map inside $ \mathcal{E}_\Gamma  \times \mathcal{E}_\Gamma   \times \overline{\mathcal{E}  }_\Gamma $ is a Dirac structure with support the graph of the multiplication in $\mathcal{G}^E \times \mathcal{G}^E \times \mathcal{G}^E$. But this follows from the fact that the Courant bracket on $\mathcal{E}_V$ restricted to constant sections is obtained by differentiating the following action of $\mathcal{G}^{2V}$ on $\mathcal{G}^E$:
\[ (g_v,h_v)_{v\in V}\cdot (a_e)_{e\in E}= (h_{\mathtt{T} (e)}a_eg_{\mathtt{S}(e)}^{-1})_{e\in E}, \] 
for all $g_v,h_v,a_e\in \mathcal{G} $, and this action is a Lie groupoid morphism. \end{proof} 
\begin{rema} Notice that the Lagrangian complement $\mathcal{L}^\star_\Gamma  \hookrightarrow \mathcal{E}_\Gamma $ of $\mathcal{L}_\Gamma $ defined in Remark \ref{rem:cancomqpoi} is also multiplicative, i.e. it is a VB-subgroupoid of $(\mathcal{E}_\Gamma ,\mathcal{L}_\Gamma  )$. \end{rema}
Let $(\Sigma,V)$ be a marked compact oriented surface with nonempty boundary and consider a Lie 2-group $\mathcal{G} \rightrightarrows G $ endowed with a split quadratic Lie 2-algebra $\mathfrak{G} $. Let $\mathfrak{D}=\mathfrak{G} \oplus\overline{ \mathfrak{G}} $ be the double of $\mathfrak{G} $. We have that the structure maps of $\mathcal{G} $ endow $M:=\hom(\Pi_1(\Sigma,V),\mathcal{G} ) $ with a Lie groupoid structure over $M_0:=\hom(\Pi_1(\Sigma,V),G)$ 
\begin{align} (M \rightrightarrows M_0)=\left( \hom(\Pi_1(\Sigma,V),\mathcal{G}  ) \rightrightarrows \hom(\Pi_1(\Sigma,V), G) \right) \label{eq:modspa2gr}\end{align}  
and the moment map $\mu: \hom(\Pi_1(\Sigma,V),\mathcal{G}) \rightarrow \mathcal{G}^E$ is a Lie groupoid morphism with respect to this groupoid structure.
\begin{exa} Suppose that $V$ contains exactly one point for each component of $\partial\Sigma$ and suppose that our Lie 2-group is $ G \rtimes \text{Aut}(G)\rightrightarrows \text{Aut}(G) $, the automorphism 2-group of a connected semi-simple Lie group $G$. We can use the groupoid structure on $M$ discussed above to view the spaces of twisted representations \cite{twimodspa} as the source-fibers of $M$ and their twisted quasi-Hamiltonian structures as restrictions of the canonical quasi-Hamiltonian space structure on $M$ provided by \cite{alemeimal}. 

The Lie 2-algebra of $G \rtimes \text{Aut}(G)$ is determined by the differential crossed module corresponding to the map $\mathfrak{g}  \rightarrow \{D:\mathfrak{g} \rightarrow \mathfrak{g} |\text{$D$ is a derivation of $\mathfrak{g} $} \}$ defined by $u\mapsto \ad_u$. But the inner automorphisms of $G$ induce a Lie subgroup of finite index in $\text{Aut}(G)$ \cite{autssG} so every derivation of $\mathfrak{g} $ is inner and the map above can be identified with the identity $\text{id}_{\mathfrak{g} }: \mathfrak{g} \rightarrow \mathfrak{g} $. As a consequence, this Lie 2-algebra is isomorphic to the pair groupoid Lie 2-algebra $\mathfrak{G}:=\left(\mathfrak{g} \times \mathfrak{g} \rightrightarrows \mathfrak{g}\right) $. In fact, the map determined by $(u,v)\mapsto (u+v,v)$ is an isomorphism between the Lie 2-algebra corresponding to the differential crossed module $\text{id}_{\mathfrak{g} }$ as in Definition \ref{def:difcromod} and the pair groupoid Lie 2-algebra over $\mathfrak{g} $ with the direct product Lie algebra structure on $\mathfrak{g} \times \mathfrak{g} $. But then we can equip $\mathfrak{G} $ with the pairing $\mathfrak{s} = s\ominus s$, where $s$ is the Killing form on $\mathfrak{g} $. The pairing $\mathfrak{s} $ automatically makes the pair groupoid $\mathfrak{g} \times \overline{\mathfrak{g} } \rightrightarrows \mathfrak{g} $ into a split quadratic Lie 2-algebra (or, equivalently, as a CA-groupoid over a point). 

In this situation, the $\mathtt{s}$-fiber of the Lie groupoid \eqref{eq:modspa2gr} over $\sigma\in \hom(\Pi_1(\Sigma,V),\text{Aut} (G)) $ is exactly the $\sigma$-twisted moduli space associated to $(\Sigma,V)$ \cite{twiwilcha,twimodspa} which is often denoted by $\hom_\sigma(\Pi_1(\Sigma,V),G)$. 

Since $V$ contains only one point for each component of $\partial \Sigma$, there is a copy of the Cartan-Dirac structure $(\mathcal{E} ,\mathcal{L})$ (see Example \ref{ex:quaham}) over $\mathcal{G}:=G \rtimes \text{Aut}(G)  $ for each of these components. So $(\mathcal{E}_{\Gamma},\mathcal{L}_{\Gamma })$ is isomorphic to the product Courant algebroid $(\mathcal{E} ,\mathcal{L})^E$. Consider the $\mathfrak{G} \oplus \mathfrak{G} $-action on $\mathcal{G} $ given by \eqref{eq:infact} but restricted to the $\mathtt{s}$-fiber $\mathtt{s}^{-1}(0\oplus 0) \hookrightarrow \left( \mathfrak{G} \oplus \mathfrak{G} \rightrightarrows \mathfrak{g} \oplus \mathfrak{g}\right)  $. What we obtain then is the $\tau$-twisted Cartan-Dirac structure over $G\cong \mathtt{t}^{-1}(\tau)$ for every $\tau\in \text{Aut}(G)$ \cite{twimodspa}:
\[ \xymatrix@-0.7pc{(\mathcal{E}_\tau,\mathcal{L}_\tau):=( \mathfrak{g} \oplus \mathfrak{g} ,\mathfrak{g}_\Delta)\times G \ar@{^{(}->}[r]\ar[d] & (\mathcal{E} ,\mathcal{L}) \ar[d] \\
G \times \{\tau\} \ar@{^{(}->}[r] & \mathcal{G}; } \]
the $\tau$-twisted Cartan-Dirac structure is given by the diagonal inclusion of $\mathcal{L}_\tau=\mathfrak{g} \times G$ inside the action Courant algebroid $\mathcal{E}_\tau= (\mathfrak{g}\oplus \mathfrak{g} )\times G$ associated to the metric $\mathfrak{s} $ on $\mathfrak{d}=\mathfrak{g} \oplus \mathfrak{g} $ and the action map $u\oplus v\mapsto v^r-\tau(u)^l$ for all $u,v\in \mathfrak{g} $ (it can be checked that $\mathcal{E}_\tau$ is isomorphic to the usual untwisted action Courant algebroid over $G$ defined as in \S \ref{subsec:twicar}). Since the moment map 
\[ \mu: \hom(\Pi_1(\Sigma,V),\mathcal{G}) \rightarrow \mathcal{G}^E \]
is a Lie groupoid morphism, it takes source fibers to source fibers. We can check that the Manin pair morphism $R_{\Sigma,V}$ over $\mu$ (which is determined by a 2-form as in \S \ref{subsec:mpmor}) 
endows $\hom_\sigma(\Pi_1(\Sigma,V),G)$ with a {\em tq-Hamiltonian $G^E$-space structure}. The twisting automorphism is given by taking the value of $\sigma\in \hom(\Pi_1(\Sigma,V),\text{Aut} (G))$ along each component of $\partial \Sigma$, see \cite[Thm. 4.2.4]{twimodspa}:
\[ \xymatrix@-0.7pc{(\mathbb{T}\hom(\Pi_1(\Sigma,V),\mathcal{G}),T\hom(\Pi_1(\Sigma,V),\mathcal{G} ) \ar[r]^-{R_{\Sigma,V}} & (\mathcal{E},\mathcal{L}  )^E  \\ (\mathbb{T}\hom_\sigma(\Pi_1(\Sigma,V),G),T \hom_\sigma(\Pi_1(\Sigma,V),G)) \ar[u]^{R_j} \ar[r]^-{R_{\Sigma,V}|} & (\mathcal{E}_\tau,\mathcal{L}_\tau)^E; \ar@{^{(}->}[u] } \]
where $R_j$ is the canonical morphism \eqref{eq:canmpm} associated to the inclusion 
\[ j: \hom_\sigma(\Pi_1(\Sigma,V),G) \hookrightarrow \hom(\Pi_1(\Sigma,V),\mathcal{G}  )\] 
as the $\mathtt{s}$-fiber over $\sigma$. As a concrete example, take $\Sigma$ as the annulus and two generators for $\Pi_1(\Sigma,V)$ as in \cite[Fig. 4.2.1]{twimodspa}. Then $\hom(\Pi_1(\Sigma,V),\mathcal{G})$ can be identified with $\mathcal{G}^2$. The quasi-Hamiltonian 2-form $\omega$ on $\hom(\Pi_1(\Sigma,V),\mathcal{G}  )$ induced by the canonical identification of $\mathcal{E}_{\Gamma }$ with $\mathbb{T}_\eta \mathcal{G} \times  \mathbb{T}_\eta \mathcal{G}$ \cite{purspi} is given by the formula 
\[ \Omega_{(x,y)}=-\frac{1}{2}\left(\mathfrak{s}  (x^*\theta^l_{\mathcal{G} },y^*\theta^r_{\mathcal{G} })+ \mathfrak{s} (x^*\theta^r_{\mathcal{G} },y^*\theta^l_{\mathcal{G} }) \right), \] 
for all $(x,y)\in \mathcal{G}^2$, where $\theta^l_{\mathcal{G} }$ and $\theta^r_{\mathcal{G} }$ are, respectively, the left and right-invariant Maurer-Cartan forms on $\mathcal{G} $. Take $\sigma=(\tau,\kappa)\in \hom(\Pi_1(\Sigma,V),\text{Aut} (G))$, then we have that \[ (j^* \Omega)_{(a,b)}=-\frac{1}{2}\left({s} (\tau^{-1}(a^*\theta^l_{G }),b^*\theta^r_{{G} })+ {s}(a^*\theta^r_{{G} },\kappa^{-1}(b^*\theta^l_{{G} })) \right), \]
for all $(a,b)\in G^2\cong \hom_\sigma(\Pi_1(\Sigma,V),G)$. This follows from the fact that $i_\tau^*\theta^l_{\mathcal{G} }=\tau^{-1}(\theta^l_G)$ and $i_\tau^*\theta^r_{\mathcal{G} }=\theta^r_G$, where $i:G \times \{\tau\} \hookrightarrow \mathcal{G} $ is the inclusion, see \cite[Ex. 4.2.5]{twimodspa}. \end{exa}
\subsubsection{Multiplicative boundary decorations} From now on we will assume that we work with a connected Lie 2-group $\mathcal{G} $ over a connected base Lie group $G$ with split quadratic Lie 2-algebra and $(\Sigma,V)$ is a fully marked surface. Let us denote $M:=\hom(\Pi_1(\Sigma,V),\mathcal{G} ) $ as in \eqref{eq:modspa2gr}.
\begin{prop}\label{pro:mulmp2} The canonical exact Manin pair morphism 
\[ R_{\Sigma,V}: (\mathbb{T}M,T M) \rightarrow (\mathcal{E}_{\Gamma } ,\mathcal{L}_{\Gamma }) \]
is a morphism of multiplicative Manin pairs over $\mu$. \end{prop}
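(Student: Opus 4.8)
The plan is to verify, in order, the three conditions packaged into the definition of a morphism of multiplicative Manin pairs given earlier. First, both Manin pairs must themselves be multiplicative. The target $(\mathbb{A}_\Gamma,L_\Gamma)$ over $\mathcal{G}^E \rightrightarrows G^E$ is multiplicative by Proposition \ref{pro:catcardir}. The source $(\mathbb{T}M,TM)$ is the standard Manin pair of the Lie groupoid $M \rightrightarrows N$, and this is always multiplicative: for any Lie groupoid the tangent and cotangent prolongations $TM \rightrightarrows TN$ and $T^*M \rightrightarrows A_M^*$ are VB-groupoids, so $\mathbb{T}M=TM\oplus T^*M$ is a CA-groupoid with its standard ($H=0$) structure and $TM$ is a VB-subgroupoid which is an LA-groupoid inside it. Second, the support of $R_{\Sigma,V}$ is $\text{graph}(\mu)$, and $\mu$ is a Lie groupoid morphism, as already noted in the text, because it is the restriction of a $\mathcal{G}$-representation to the edges of $E$, an operation evaluated pointwise in the group object $\mathcal{G}$. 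Thus the only substantial point is the third condition: that $R_{\Sigma,V}\subset \overline{\mathbb{T}M}\times \mathbb{A}_\Gamma$ is a VB-subgroupoid of the product CA-groupoid.

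To prove this I would recall the construction of $R_{\Sigma,V}$ by triangulation and sewing, exactly as in the classical case \cite[Thm. 1.2]{quisur2}. One chooses a triangulation of $\Sigma$ whose one-skeleton contains every edge of $E$; for each triangle $\Delta$ there is an elementary exact Manin pair morphism associated to $(\Delta,V_\Delta)$, and $R_{\Sigma,V}$ is assembled by composing these building blocks with sewing morphisms of the type $R_\chi$ that identify shared edges, together with the canonical Courant morphisms $R_f$ of \eqref{eq:canmpm} attached to the relevant inclusion and restriction maps. The key observation is that every ingredient is governed by a map that is a Lie groupoid morphism for the pointwise $\mathcal{G}$-structure: the restriction-to-edges maps, the concatenation and inversion of holonomies, and the gluing identifications $\chi$ are all built solely from the multiplication, inversion and unit of the group object $\mathcal{G}$. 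Consequently the canonical morphisms $R_f$ attached to them are VB-subgroupoids, and each sewing morphism $R_\chi$, being a Cartesian product of Lagrangian relations of the type $\mathfrak{R}_e,\mathfrak{R}_{i_a(v)}$ appearing in the proof of Theorem \ref{thm:poigro} together with $\text{graph}(\chi)$, is a VB-subgroupoid as well.

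Since composition and Cartesian product of relations that are VB-subgroupoids is again a VB-subgroupoid, and since clean composability holds here just as in the classical setting (the relevant transversality concerns ranks along the source fibres and is insensitive to the extra groupoid direction supplied by $\mathcal{G}$), the relation built this way is a multiplicative exact Manin pair morphism over $\text{graph}(\mu)$. To identify it with $R_{\Sigma,V}$ itself I would invoke the uniqueness of exact Manin pair morphisms for polygons \cite[Prop. 4.1]{quisur2}: after triangulating it suffices to treat a single triangle, where the multiplicative morphism we have produced and $R_{\Sigma,V}$ are both exact Manin pair morphisms over the same base map and hence coincide; the commutativity of sewing \cite[\S 4.1.1]{quisur2} then propagates this identification consistently over the whole surface. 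Therefore $R_{\Sigma,V}$ equals a relation that is a VB-subgroupoid, which completes the verification.

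The main obstacle I anticipate is the elementary building block, namely checking directly that the canonical exact Manin pair morphism attached to a single triangle is a VB-subgroupoid of $\overline{\mathbb{T}\mathcal{G}^k}\times \mathbb{A}_\Gamma$. This is precisely the point at which one must make explicit that the Lagrangian relation defining $R_{\Sigma,V}$ is cut out by the group operations of $\mathcal{G}$ and nothing else, so that it is automatically compatible with the pointwise groupoid direction. Once this functoriality of the construction in the $2$-group structure of $\mathcal{G}$ is made precise at the level of triangles, the VB-subgroupoid property propagates through composition and sewing, and the uniqueness statement for polygons is what lets us bypass any hands-on computation of the Courant bracket, replacing it by the identification of two exact Manin pair morphisms with the same support.
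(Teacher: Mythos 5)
Your strategy coincides with the paper's own: reduce to polygons via the triangulation/sewing construction of $R_{\Sigma,V}$, show each building block is multiplicative, and use the fact that compositions of Courant relations that are VB-subgroupoids are again VB-subgroupoids. Your treatment of the sewing step is also essentially the paper's: there the sewing relations (denoted $\mathfrak{R}_v$, $\mathfrak{R}_1$, $\mathfrak{R}_2$ in the paper's proof, the analogues of your $\mathfrak{R}_e$, $\mathfrak{R}_{i_a(v)}$) are observed to be Lagrangian Lie 2-subalgebras of the relevant quadratic Lie 2-algebras, which is the precise form of your claim that they are ``built solely from the group operations of $\mathcal{G}$.''

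The genuine gap is the base case, which you explicitly defer as ``the main obstacle I anticipate'' instead of proving: that the canonical exact Manin pair morphism of a single polygon is a VB-subgroupoid. This is the only point in the whole argument where something concrete must be verified, so leaving it as an anticipated obstacle leaves the proof incomplete. The paper closes it in a few lines: for a polygon, $M\cong\{(g_e)\in \mathcal{G}^E\,|\,\prod_{e\in E} g_e=1\}$ and the inclusion $i:M\hookrightarrow \mathcal{G}^E$ is a Lie subgroupoid (the defining relation is preserved because the group multiplication of $\mathcal{G}$ is a Lie groupoid morphism), and $R_{\Sigma,V}$ admits the explicit description
\[ \mathtt{a}(X)+i^*\alpha \;\sim_{R_{\Sigma,V}}\; X+\mathtt{a}^*\alpha, \qquad X\in L_\Gamma|_\rho,\quad \alpha\in T^*_{i(\rho)}\mathcal{G}^E, \]
so that $R_{\Sigma,V}$ is generated by the anchor $\mathtt{a}:\mathbb{A}_\Gamma\rightarrow T\mathcal{G}^E$ and by $Ti$, both of which are VB-groupoid morphisms --- the former precisely by the multiplicativity of the CA-groupoid structure established in Proposition \ref{pro:catcardir}. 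Multiplicativity of the relation is then immediate. Note also that once the base case is done this way, your appeal to the uniqueness of exact Manin pair morphisms for polygons is redundant: since you assemble your candidate from the canonical polygon morphisms by the same sewing procedure that, by the cited structure theorem, produces $R_{\Sigma,V}$ itself, the two relations are equal by construction, with no separate identification step required.
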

\begin{proof} By the construction of $R_{\Sigma,V}$, it is enough to verify the statement for polygons and then we can check that, if it holds for a fully marked surface $(\Sigma',V')$, it continues to hold for the new marked surface which is obtained by gluing two edges in the boundary graph of $(\Sigma',V')$ as in \S \ref{subsec:qhammodspa}. First of all, if $\Sigma$ is a polygon, then $i:M\cong\{(g_e)\in \mathcal{G}^E|\prod_{e\in E} g_e=1\} \hookrightarrow \mathcal{G}^E$ is a Lie subgroupoid. Then the Courant morphism $R_{\Sigma,V}$ is given in this case by \[ \mathtt{a}(X)+i^* \alpha \sim_{R_{\Sigma,V}} X+ \mathtt{a}^* \alpha \] 
for all $X\in \mathcal{L}_{\Gamma }|_\rho$ and all $\alpha \in T^*_{i(\rho)} G^E$, where $\rho\in M$ \cite[Ex. 2.9]{quisur2}. Then $R_{\Sigma,V}$ is a multiplicative Dirac structure with support on $\text{Graph}(i)$ because both the anchor $\mathtt{a}:\mathcal{E}_{\Gamma }\rightarrow T \mathcal{G}^E $ and $Ti$ are VB-groupoid morphisms. 

Now suppose that $(\Sigma',V')$ is a fully marked compact oriented surface for which the canonical morphism $R_{\Sigma',V'}$ is a multiplicative Manin pair morphism over the corresponding moment map $\mu':M':=\hom(\Pi_1(\Sigma',V'),\mathcal{G} ) \rightarrow \mathcal{G}^{E'}$. Let $\Gamma'=(E',V')$ be the boundary graph of $(\Sigma',V')$. For the sake of keeping the notation simple, let us suppose that $\Sigma$ is obtained from $\Sigma'$ by sewing a pair of boundary edges $e,e'\in E'$ as in \S \ref{subsec:sew}. Now we just have to observe that, in this situation, the reductive data \eqref{eq:sew1} and \eqref{eq:sew2} are automatically multiplicative with respect to the multiplicative Manin pair structure on $(\mathcal{E}_{\Gamma'} ,\mathcal{L}_{\Gamma'}  ) $ in the sense of Proposition \ref{pro:cared}. This follows from the fact that $\mathfrak{c}_{\text{sew}_{\{e,e'\}} }\hookrightarrow  \mathfrak{D}^{V'}  $ is a Lie 2-subalgebra and $N \hookrightarrow  \mathcal{G}^{E'}$ is a Lie subgroupoid. Since $R_{\Sigma',V'}  $ is supposed to be multiplicative, Proposition \ref{pro:cared} implies that $R_{\Sigma,V}=\left( R_{\Sigma',V'}\right)_{\mathfrak{c}_{\text{sew}_{\{e,e'\}} },N}$ is multiplicative as well. \end{proof}   Let 
\[ (H,\mathcal{A})= \left(\prod_{e\in E} H_e,\prod_{v\in V} \mathfrak{a}_v \times \mathcal{G}^E\right) \] 
be a decoration of the boundary graph $\Gamma =(E,V)$ of $\Sigma$ with respect to the structure group $\mathcal{G} $. We say that the boundary data are {\em multiplicative} if each of the $H_e \hookrightarrow \mathcal{G} $ is a Lie subgroupoid over $H_e'\hookrightarrow G$ and each of the $\mathfrak{a}_v \hookrightarrow \mathfrak{G} $ is a Lie 2-subalgebra over $\mathfrak{a}_v'\hookrightarrow \mathfrak{g} $. In this way we also get a decoration $(H', \mathcal{A}')=\left(\prod_{e\in E} H_e' , \prod_{v\in V} \mathfrak{a}_v' \times {G}^E\right)$ of the boundary graph $\Gamma =(E,V)$ of $\Sigma$ with respect to ${G} $. As a simple consequence of Proposition \ref{pro:mulmp}, we have the following result. 
\begin{thm}\label{thm:twicardir} Suppose that the moduli space $\mathfrak{M}_\mathcal{G}(\Sigma,V)_{H,\mathcal{A}}$ is smooth and the boundary data are multiplicative, where $\mathcal{G} \rightrightarrows G$ is a connected Lie 2-group endowed with a split quadratic Lie 2-algebra. Then there is a Lie groupoid structure on $\mathfrak{M}_\mathcal{G}(\Sigma,V)_{H,\mathcal{A}} \rightrightarrows \mathfrak{M}_{G}(\Sigma,V)_{H',\mathcal{A}'}$ which makes it into a Poisson groupoid with its canonical Poisson structure. \end{thm}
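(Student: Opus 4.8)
The plan is to deduce the theorem directly from item (2) of Proposition \ref{pro:mulmp}, feeding it the data supplied by the categorified constructions of this subsection. I would set $M=\hom(\Pi_1(\Sigma,V),\mathcal{G})$, a Lie groupoid over $N=\hom(\Pi_1(\Sigma,V),G)$, with moment map $\mu\colon M \rightarrow \mathcal{G}^E$. Proposition \ref{pro:catcardir} supplies the multiplicative Manin pair $(\mathbb{A}_\Gamma,L_\Gamma)$ over $\mathcal{G}^E \rightrightarrows G^E$, which is moreover exact, while Proposition \ref{pro:mulmp2} supplies the morphism of multiplicative Manin pairs $R_{\Sigma,V}\colon(\mathbb{T}M,TM) \rightarrow (\mathbb{A}_\Gamma,L_\Gamma)$ over the Lie groupoid morphism $\mu$. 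So, taking $\mathcal{M}=M$, $\mathcal{P}=\mathcal{G}^E$, $J=\mu$ and $R=R_{\Sigma,V}$, the standing hypotheses of Proposition \ref{pro:mulmp} already hold, and what remains is to provide the multiplicative Dirac structure $L$ and the Lie subgroupoid $\mathcal{O}$, and then to check that the resulting reduction is a groupoid.

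I would first observe that the multiplicative boundary data provide precisely this input. Since each $\mathfrak{l}_v$ is a Lagrangian Lie $2$-subalgebra of $\mathfrak{D}$, the product $\prod_{v\in V}\mathfrak{l}_v$ is a Lagrangian, involutive VB-subgroupoid of $\mathfrak{D}^V \rightrightarrows \mathfrak{d}^V$, so that $L=\prod_{v\in V}\mathfrak{l}_v \times \mathcal{G}^E$ is a multiplicative Dirac structure inside $\mathbb{A}_\Gamma$. Likewise, as each $H_e \hookrightarrow \mathcal{G}$ is a Lie subgroupoid, the holonomy constraint $\mathcal{O}:=\prod_{e\in E}H_e$ is a Lie subgroupoid of $\mathcal{G}^E$, and it is a union of $L$-orbits because $H$ is $L$-invariant by definition of a decoration. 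The smoothness assumption on $\mathfrak{M}_{\mathcal{G}}(\Sigma,V)_{H,L}$ gives transversality of $\mu$ to $\mathcal{O}$ together with clean composability of $L$, $R_{\Sigma,V}$ and $R_i$, where $i\colon \mu^{-1}(\mathcal{O}) \hookrightarrow M$ is the inclusion.

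The crux, and the step I expect to be the main obstacle, is showing that the reduced space inherits a groupoid structure making the quotient map a groupoid morphism. Because $\mu$ is a Lie groupoid morphism and $\mathcal{O}$ is a subgroupoid, $\mu^{-1}(\mathcal{O})$ is a Lie subgroupoid of $M \rightrightarrows N$. By the Remark following Proposition \ref{pro:mulmp}, the null foliation of $\mathfrak{B}_i(L\circ R_{\Sigma,V})$ is the orbit foliation of the $L\cap L_\Gamma$-action on $\mu^{-1}(\mathcal{O})$, which integrates to the gauge action \eqref{eq:gauact} restricted to the connected sub-$2$-group $\mathcal{K} \hookrightarrow \mathcal{G}^V$ integrating $L\cap L_\Gamma=\prod_{v\in V}(\mathfrak{l}_v\cap \mathfrak{G}_\Delta)\times \mathcal{G}^E$, where $\mathfrak{G}_\Delta \hookrightarrow \mathfrak{D}$ is the diagonal. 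As in the computation inside the proof of Proposition \ref{pro:catcardir}, the gauge action of the gauge $2$-group $\mathcal{G}^V \rightrightarrows G^V$ on $M \rightrightarrows N$ is a morphic action, that is, its action map is a Lie groupoid morphism; hence, restricting to $\mathcal{K}$, which acts freely and properly by the smoothness hypothesis, the quotient $\mathfrak{M}_{\mathcal{G}}(\Sigma,V)_{H,L}=\mu^{-1}(\mathcal{O})/\mathcal{K}$ acquires a Lie groupoid structure over $\mathfrak{M}_G(\Sigma,V)_{H',L'}$ for which the projection is a groupoid morphism. This is the exact categorified analogue of Step 2 in the proof of Theorem \ref{thm:poigro}, and it is where the Lie $2$-group hypothesis is genuinely used.

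Finally, with the groupoid structure on the quotient and the morphism property of the projection established, item (2) of Proposition \ref{pro:mulmp} applies without further work and yields that the canonical Poisson structure induced by $\mathfrak{B}_i(L\circ R_{\Sigma,V})$ makes $\mathfrak{M}_{\mathcal{G}}(\Sigma,V)_{H,L} \rightrightarrows \mathfrak{M}_G(\Sigma,V)_{H',L'}$ into a Poisson groupoid. The remaining verifications, namely that $L\cap L_\Gamma$ and the induced base data match the decoration $(H',L')$ and that the clean-composability conditions descend from smoothness, are routine and parallel the untwisted case.
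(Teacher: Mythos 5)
Your proof is correct and follows essentially the same route as the paper's: the multiplicativity of the boundary data makes $\mu^{-1}(H)$ a Lie subgroupoid on which the integrating Lie 2-group acts by a morphic (multiplicative) action, so the quotient inherits a Lie groupoid structure, and the compatible Poisson structure then comes from Propositions \ref{pro:catcardir}, \ref{pro:mulmp2} and item (2) of Proposition \ref{pro:mulmp}. The only difference is one of detail: you make explicit the inputs ($L=\prod_{v\in V}\mathfrak{l}_v\times\mathcal{G}^E$, $\mathcal{O}=H$, the identification of the null foliation with the $L\cap L_\Gamma$-orbits) that the paper's two-line proof leaves implicit.
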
 
\begin{proof} Since the boundary data are multiplicative, $\mu^{-1}(H)$ is a Lie subgroupoid of \eqref{eq:modspa2gr} and the action of the Lie 2-group $K$ on this Lie groupoid is multiplicative and hence it endows the quotient with a Lie groupoid structure. Proposition \ref{pro:mulmp2} implies that $R_{\Sigma,V}$ is a morphism of multiplicative Manin pairs so the existence of a compatible Poisson groupoid structure follows from Proposition \ref{pro:mulmp}. \end{proof}
\begin{rema} Since the canonical Lagrangian complement $\mathcal{L}^\star_\Gamma  \hookrightarrow \mathcal{E}_\Gamma  $ of $\mathcal{L}_\Gamma $ defined in Remark \ref{rem:cancomqpoi} is multiplicative, the induced bivector field $\pi_{\Sigma,V}$ on $M= \hom(\Pi_1(\Sigma,V),\mathcal{G} )$ described in \eqref{eq:qpoimodspa} is also multiplicative. \end{rema}

\subsubsection{The compatibility between gluing and categorification} In order to get a double Poisson groupoid by applying Theorem \ref{thm:poigro} and categorifying the structure group, we have to also consider boundary data which are multiplicative. So let $(\Sigma,V)$ be a fully marked compact and oriented surface as in the statement of Theorem \ref{thm:poigro} with boundary graph $\Gamma =(E,V)$ and let $\mathcal{G} \rightrightarrows G$ be a connected Lie 2-group equipped with a split quadratic Lie 2-algebra. Now let $(\widehat{H},\widehat{\mathcal{A} })$ be a symmetric decoration of the doubled surface $\widehat{\Sigma}=\Sigma\cup_S \Sigma$ for some subset $S\subset E$ as in \S \ref{subsec:symdec}. Assume that $(\widehat{H},\widehat{\mathcal{A}})$ are multiplicative and let $(\widehat{H}',\widehat{\mathcal{A} }')$ be the induced boundary data for $\widehat{\Sigma}$ with respect to $G$ as in Theorem \ref{thm:twicardir}.  
\begin{thm}\label{thm:doupoigro2} Suppose that the conditions of Theorem \ref{thm:poigro} are satisfied. Then the moduli space $\mathfrak{M}_{\mathcal{G} }(\widehat{\Sigma},\widehat{V})_{\widehat{H},\widehat{\mathcal{A}}} $ becomes a double Poisson groupoid with sides $\mathfrak{M}_{{G} }(\widehat{\Sigma},\widehat{V})_{\widehat{H}',\widehat{\mathcal{A} }'} $ and $\mathfrak{M}_{\mathcal{G} }({\Sigma},{V_0})_{\check{H},\check{\mathcal{A}}} $ over the base manifold $\mathfrak{M}_G(\Sigma,V_0)_{\check{H}',\check{\mathcal{A} }'} $ with respect to its canonical Poisson structure. \end{thm}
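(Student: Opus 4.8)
The plan is to show that the two Poisson groupoid structures furnished by Theorem \ref{thm:poigro} and Theorem \ref{thm:twicardir} are compatible, so that together they constitute a double Poisson groupoid. First I would apply Theorem \ref{thm:poigro} with $\mathcal{G}$ in the role of the structure group: the split quadratic Lie 2-algebra $\mathfrak{G}$ equipped with $\overline{B}$ is in particular a quadratic Lie algebra for $\mathcal{G}$ regarded as an ordinary Lie group, and by hypothesis the $H_e$ adjacent to $S$ are subgroups and the moduli space is smooth. This produces the \emph{gluing-direction} groupoid structure $\mathfrak{M}_{\mathcal{G}}(\widehat{\Sigma},\widehat{V})_{\widehat{H},\widehat{L}} \rightrightarrows \mathfrak{M}_{\mathcal{G}}(\Sigma,V)_{\check{H},\check{L}}$ together with its canonical Poisson structure. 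Next, since $(\widehat{H},\widehat{L})$ are multiplicative, Theorem \ref{thm:twicardir} applied directly to the doubled surface $(\widehat{\Sigma},\widehat{V})$ yields the \emph{categorification-direction} groupoid structure $\mathfrak{M}_{\mathcal{G}}(\widehat{\Sigma},\widehat{V})_{\widehat{H},\widehat{L}} \rightrightarrows \mathfrak{M}_{G}(\widehat{\Sigma},\widehat{V})_{\widehat{H}',\widehat{L}'}$, again with a compatible Poisson structure. The two sides and the common corner $\mathfrak{M}_{G}(\Sigma,V)_{\check{H}',\check{L}'}$ are identified by applying Remark \ref{rem:baspoigro} in both directions.

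The heart of the matter is to verify compatibility, and I would do this by returning to the pre-reduction picture. Writing $M=\hom(\Pi_1(\Sigma,V),\mathcal{G})$ and $N=\hom(\Pi_1(\Sigma,V),G)$, the space $M\times M$ carries two commuting groupoid structures: the pair groupoid $M\times M \rightrightarrows M$ (gluing) and the square of the categorification groupoid $M\times M \rightrightarrows N\times N$, which together make $M\times M$ a double Lie groupoid with corner $N$. The moment map $(\mu,\mu)$ is a morphism of double Lie groupoids, being a pair-groupoid morphism as in the proof of Theorem \ref{thm:poigro} and a categorification morphism by Proposition \ref{pro:mulmp2}. I would then check that every ingredient of the reduction is \emph{doubly} multiplicative: the subgroupoid $\mathcal{H}$ is a double Lie subgroupoid because each factor $\mathcal{H}_e$ is built from a multiplicative $H_e$ together with diagonal insertions, and diagonals of the group object $\mathcal{G}$ are automatically Lie 2-subgroupoids; the symmetry group $\mathcal{K}$ is a Lie 2-subgroup acting freely by a categorified morphic action; and the reducible Dirac structure $\mathfrak{B}_i(\mathcal{L}\circ(R_{\Sigma,V},R_{\Sigma,V}))$ is multiplicative with respect to both groupoid structures, since both $\mathcal{L}$ and $R_{\Sigma,V}$ are. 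Because the whole construction descends from a \emph{single} doubly multiplicative Dirac structure on $M\times M$, the one reduced Poisson bracket is simultaneously multiplicative over both sides, which is exactly the data of a double Poisson groupoid.

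The step I expect to be the main obstacle is the compatibility bookkeeping in the previous paragraph: confirming that the symmetric extension $(\check{H},\check{L})$ of multiplicative boundary data remains multiplicative, that the diagonal pieces inserted at the edges of $S$ and the vertices of $V-V'$ are genuine Lie 2-subgroupoids, and that $(\mu,\mu)^{-1}(\mathcal{H})$ is a double Lie subgroupoid on which $\mathcal{K}$ acts freely by double Lie groupoid automorphisms. As in the remark following Theorem \ref{thm:doupoigro}, the freeness of this action—now of a structure that is a triple Lie groupoid once the gauge group $\mathcal{G}^V$ is taken into account—delivers the double Lie groupoid structure on the quotient. Once these multiplicativity statements are established, no further compatibility check is required: the two Poisson groupoid structures of Theorems \ref{thm:poigro} and \ref{thm:twicardir} coincide as a single Poisson tensor, and the double source submersion condition follows from that of $M\times M$ restricting cleanly to $(\mu,\mu)^{-1}(\mathcal{H})$ and descending to the free quotient.
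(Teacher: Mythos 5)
Your proposal is correct and follows essentially the same route as the paper: both construct the two Poisson groupoid structures from Theorems \ref{thm:poigro} and \ref{thm:twicardir}, then verify compatibility by exhibiting $M\times M$ as a double Lie groupoid with sides $M$ and $N\times N$ over $N$, checking that $(\mu,\mu)$, the subgroupoid $\mathcal{H}$ (factor by factor as in \eqref{eq:holgro}), and $\mathcal{K}$ are all doubly multiplicative, and passing to the quotient. The only cosmetic difference is that you re-derive the Poisson compatibility from double multiplicativity of the single Dirac structure, whereas the paper takes it directly from the two cited theorems applied to the same canonical Poisson structure; this is redundant but harmless.
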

\begin{rema} Let us observe that $H_e \hookrightarrow \mathcal{G} $ is a Lie 2-subgroup whenever $e$ is adjacent to an edge in $S$, since it is both a subgroup and a subgroupoid. According to Proposition \ref{rem:baspoigro}, the boundary data $(\widehat{H},\widehat{\mathcal{A} })$ induce boundary data $(\check{H},\check{\mathcal{A} })$ for the marked surface $(\Sigma,V_0)$ with respect to $\mathcal{G} $, while $(\widehat{H}',\widehat{\mathcal{A}}')$ induce boundary data $(\check{H}',\check{\mathcal{A} }')$ for $(\Sigma,V)$ with respect to $G$.   \end{rema}
\begin{proof}[Proof of Theorem \ref{thm:doupoigro2}] Notice that Theorem \ref{thm:poigro} gives us a Poisson groupoid structure on 
\[ \mathfrak{M}_{\mathcal{G} }(\widehat{\Sigma},\widehat{V})_{\widehat{H},\widehat{\mathcal{A} }} \rightrightarrows \mathfrak{M}_{\mathcal{G} }({\Sigma},{V}_0)_{\check{H},\check{\mathcal{A} }} , \]
while the multiplicativity of the boundary data guarantees, thanks to Theorem \ref{thm:twicardir}, that we have a Poisson groupoid structure on \[ \mathfrak{M}_{\mathcal{G} }(\widehat{\Sigma},\widehat{V})_{\widehat{H},\widehat{\mathcal{A} }} \rightrightarrows \mathfrak{M}_{{G} }(\widehat{\Sigma},\widehat{V})_{\widehat{H}',\widehat{\mathcal{A} }'} ,\] 
we have to check the compatibility of the corresponding structure maps. Let us follow the notation introduced in the proof of Theorem \ref{thm:poigro}. We have that the moment map $(\mu,\mu):M \times M \rightarrow \mathcal{G}^E \times \mathcal{G}^E$ is a morphism of double Lie groupoids, where $M=\hom(\Pi_1(\Sigma,V),\mathcal{G} )$ and we consider the pair double Lie groupoid $M \times M$ with sides $M_0 \times M_0$ and $M$ over $M_0=\hom(\Pi_1(\Sigma,V),{G} )$ as in \eqref{eq:modspa2gr}. The Lie groupoids $\mathcal{H}_e$ defined in \eqref{eq:holgro} are all double Lie groupoids:
\[ \xymatrix@-0.7pc{ H_e \times H_e \ar@<-.5ex>[r] \ar@<.5ex>[r]\ar@<-.5ex>[d] \ar@<.5ex>[d]& H_e' \times H_e'\ar@<-.5ex>[d] \ar@<.5ex>[d] \\ H_e \ar@<-.5ex>[r] \ar@<.5ex>[r] & H_e', } \quad\quad \xymatrix@-0.7pc{  \mathcal{G}_\Delta \ar@<-.5ex>[r] \ar@<.5ex>[r]\ar@<-.5ex>[d] \ar@<.5ex>[d]& G_{\Delta}\ar@<-.5ex>[d] \ar@<.5ex>[d] \\ \mathcal{G} \ar@<-.5ex>[r] \ar@<.5ex>[r] & G, } \quad\quad \xymatrix@-0.7pc{ H_e \times \mathcal{G}  \ar@<-.5ex>[r] \ar@<.5ex>[r]\ar@<-.5ex>[d] \ar@<.5ex>[d]&H_e' \times G \ar@<-.5ex>[d] \ar@<.5ex>[d] \\ \mathcal{G}  \ar@<-.5ex>[r] \ar@<.5ex>[r] & G, }
\quad\quad \xymatrix@-0.7pc{ \mathcal{G} \times H_e \ar@<-.5ex>[r] \ar@<.5ex>[r]\ar@<-.5ex>[d] \ar@<.5ex>[d]& G \times H_e'\ar@<-.5ex>[d] \ar@<.5ex>[d] \\ \mathcal{G}  \ar@<-.5ex>[r] \ar@<.5ex>[r] & G; }\]
where the action groupoids above correspond to the action by left and right translations of the Lie 2-subgroup $H_e \rightrightarrows H_e'$ on $\mathcal{G} \rightrightarrows G$. As a consequence, the product $\mathcal{H} $ of the $\mathcal{H}_e $ constitutes a double Lie subgroupoid of the pair double Lie groupoid 
\[  \xymatrix @-0.7pc { \mathcal{G}^E \times \mathcal{G}^E \ar@<-.5ex>[r] \ar@<.5ex>[r]\ar@<-.5ex>[d] \ar@<.5ex>[d]& G^E \times G^E \ar@<-.5ex>[d] \ar@<.5ex>[d] \\ \mathcal{G}^E \ar@<-.5ex>[r] \ar@<.5ex>[r] & G^E }\] 
and hence $(\mu,\mu)^{-1}(\mathcal{H} )$ is also a double Lie groupoid. Analogously, $\mathcal{K}$ is a double Lie subgroupoid of the double Lie groupoid $\mathcal{G}^{2V}$ with sides $\mathcal{G}^V$ and $G^{2E}$ over $G^V$, and all its structure maps are Lie group morphisms. Since the action \eqref{eq:gauact} is a double Lie groupoid morphism $(\mathcal{G}^V \times \mathcal{G}^V ) \times( M \times M) \rightarrow M \times M$, the associated quotient is a double Lie groupoid. \end{proof}
\begin{rema} Notice that the bivector field $(\pi_{\Sigma,V}, -\pi_{\Sigma,V})$ on the double pair groupoid $M \times M$ which appears in the previous argument is multiplicative with respect to both groupoid structures. Moreover, $[\pi_{\Sigma,V},\pi_{\Sigma,V}]$ is also multiplicative with respect to the groupoid structure $M \rightrightarrows M_0$. Therefore, $M \times M$ naturally becomes a double quasi-Poisson groupoid as well. \end{rema} 

\begin{rema} In the previous theorem, if $S=E$ and the boundary data are trivial, then we would get a double symplectic groupoid structure on the full moduli space of flat $\mathcal{G} $-bundles over $\widehat{\Sigma}$, denoted by $\mathfrak{M}_\mathcal{G}(\widehat{\Sigma})$, with sides $ \mathfrak{M}_{\mathcal{G}}({\Sigma})$ and $\mathfrak{M}_G(\widehat{\Sigma})$ over $\mathfrak{M}_G(\Sigma)$, provided that these spaces were smooth. In particular, the symplectic groupoid structure $\mathfrak{M}_\mathcal{G}(\widehat{\Sigma}) \rightrightarrows  \mathfrak{M}_{G}(\widehat{\Sigma})$ should be an integration of the Poisson structure considered in \cite{huepoimodspa} but, since the associated moduli spaces are singular in general, they are out of the scope of this work and shall be studied elsewhere. \end{rema} 
\begin{exa} Consider the situation of Example \ref{exa:poigro0} but suppose that the structure group is a Lie 2-group $\mathcal{G} $ endowed with a split quadratic Lie 2-algebra $\mathfrak{G} $ which splits as the direct sum of two Lagrangian Lie 2-subalgebras $\mathfrak{G} =\mathfrak{A} \oplus \mathfrak{B} $; then $(\mathfrak{G} ,\mathfrak{A} ,\mathfrak{B} )$ can be called a {\em Manin triple of Lie 2-algebras}. The associated moduli space is then the connected Lie 2-subgroup $\mathcal{B} \hookrightarrow \mathcal{G}  $ which integrates $\mathfrak{B} $ endowed with its Poisson 2-group structure \cite{poi2gro}. \end{exa} 
If the decoration of the boundary is symmetric in two different ways as in Theorem \ref{thm:doupoigro} and if the conditions of Theorem \ref{thm:twicardir} are also satisfied, then the associated moduli spaces inherit three different Lie groupoid structures such that all the structure maps with respect to any of them is a double Lie groupoid morphism with respect to the other two. In other words, what we obtain is a groupoid object in the category of double Lie groupoids, an object which is called a {\em triple Lie groupoid} and is represented by a cube:
\[ \xymatrix @-1.7pc { A \ar@<-.5ex>[dd]\ar@<.5ex>[dd] \ar@<-.5ex>[rd] \ar@<.5ex>[rd] \ar@<-.5ex>[rr] \ar@<.5ex>[rr] && B \ar@<.5ex>[dd]|\hole \ar@<-.5ex>[dd]|\hole \ar@<.5ex>[rd] \ar@<-.5ex>[rd] \\ 
& C \ar@<.5ex>[dd] \ar@<-.5ex>[dd] \ar@<.5ex>[rr] \ar@<-.5ex>[rr] && D \ar@<.5ex>[dd] \ar@<-.5ex>[dd] \\
A' \ar@<.5ex>[rr]|\hole \ar@<-.5ex>[rr]|\hole \ar@<.5ex>[rd] \ar@<-.5ex>[rd] && B' \ar@<.5ex>[rd] \ar@<-.5ex>[rd] \\ & C' \ar@<-.5ex>[rr]\ar@<.5ex>[rr] && D'; } \]
where each of the faces is a double Lie groupoid. We say that a triple Lie groupoid is a {\em triple Poisson groupoid} if it is equipped with a Poisson structure which is multiplicative with respect to its three Lie groupoid structures. 

The following result follows from combining the arguments of Theorems \ref{thm:doupoigro} and \ref{thm:doupoigro2}.
\begin{thm}\label{thm:tripoigro} Suppose that the conditions of Theorem \ref{thm:doupoigro} are satisfied but assume that the structure group is a connected Lie 2-group $\mathcal{G} \rightrightarrows G $ endowed with a split quadratic Lie 2-algebra and the boundary data are multiplicative. Then the moduli space $\mathfrak{M}_{\mathcal{G} }(\widetilde{\Sigma},\widetilde{V})_{\widetilde{H},\widetilde{\mathcal{A}}} $ becomes a triple Poisson groupoid with respect to its canonical Poisson structure as in the following diagram:
\[ \xymatrix @-1.8pc { \mathfrak{M}_{\mathcal{G} }(\widetilde{\Sigma},\widetilde{V})_{\widetilde{H},\widetilde{\mathcal{A}}} \ar@<-.5ex>[dd]\ar@<.5ex>[dd] \ar@<-.5ex>[rd] \ar@<.5ex>[rd] \ar@<-.5ex>[rr] \ar@<.5ex>[rr] && \mathfrak{M}_{\mathcal{G} }(\widehat{\Sigma}_S,\widehat{V}^0_S)_{\check{H}_S,\check{\mathcal{A}}_S} \ar@<.5ex>[dd]|\hole \ar@<-.5ex>[dd]|\hole \ar@<.5ex>[rd] \ar@<-.5ex>[rd] \\ 
& \mathfrak{M}_{\mathcal{G} }(\widehat{\Sigma}_T,\widehat{V}_T^0)_{\check{H}_T,\check{\mathcal{A}}_T} \ar@<.5ex>[dd] \ar@<-.5ex>[dd] \ar@<.5ex>[rr] \ar@<-.5ex>[rr] && \mathfrak{M}_{\mathcal{G} }({\Sigma},{V}_{0,0})_{\check{H},\check{\mathcal{A}}} \ar@<.5ex>[dd] \ar@<-.5ex>[dd] \\
\mathfrak{M}_{{G} }(\widetilde{\Sigma},\widetilde{V})_{\widetilde{H}',\widetilde{\mathcal{A}}'} \ar@<.5ex>[rr]|!{[r];[r]}\hole \ar@<-.5ex>[rr]|!{[r];[r]}\hole \ar@<.5ex>[rd] \ar@<-.5ex>[rd] && \mathfrak{M}_{{G} }(\widehat{\Sigma}_S,\widehat{V}_S^0)_{\check{H}_S',\check{\mathcal{A}}_S'} \ar@<.5ex>[rd] \ar@<-.5ex>[rd] \\ & \mathfrak{M}_{{G} }(\widehat{\Sigma}_T,\widehat{V}_T^0)_{\check{H}_T',\check{\mathcal{A}}_T'} \ar@<-.5ex>[rr]\ar@<.5ex>[rr] && \mathfrak{M}_{{G} }({\Sigma},{V}_{0,0})_{\check{H}',\check{\mathcal{A}}'}; } \]
where $\widetilde{\Sigma}$ is the surface obtained by gluing four copies of $\Sigma$ as in \eqref{eq:surdougro} and the boundary data are defined according to the notation of Theorems \ref{thm:doupoigro}, \ref{thm:twicardir} and Proposition \ref{rem:baspoigro}. \end{thm}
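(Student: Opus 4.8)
The plan is to realize $\mathfrak{M}_{\mathcal{G}}(\widetilde{\Sigma},\widetilde{V})_{\widetilde{H},\widetilde{L}}$ as a subquotient of the ambient manifold $M^4$, where $M=\hom(\Pi_1(\Sigma,V),\mathcal{G})$, and to exhibit three mutually compatible groupoid structures on $M^4$ that survive the reduction. Writing $N=\hom(\Pi_1(\Sigma,V),G)$, the Lie $2$-group structure on $\mathcal{G}\rightrightarrows G$ makes $M$ a Lie groupoid over $N$, exactly as in the proof of Theorem \ref{thm:twicardir}, while the two decompositions of $\widetilde{\Sigma}$ in \eqref{eq:surdougro} equip $M^4$ with two pair-groupoid structures over $M^2$ exactly as in the proof of Theorem \ref{thm:doupoigro}. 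First I would check that, together with the product groupoid structure $M^4\rightrightarrows N^4$ induced pointwise by $M\rightrightarrows N$, these three structures make $M^4$ a triple Lie groupoid. Pairwise this is already known: the two pair structures are compatible by Theorem \ref{thm:doupoigro}, while each of them is compatible with $M^4\rightrightarrows N^4$ by the argument of Theorem \ref{thm:doupoigro2}, which realizes $M\times M$ as a double Lie groupoid with sides $N\times N$ and $M$ over $N$. The remaining coherence should be automatic because every structure map in sight is either a pair-groupoid map or the pointwise application of a Lie-$2$-group morphism.

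Next I would transport these three structures along the moment map $\mu^4:M^4\to \mathcal{G}^{4E}$, which is a triple Lie groupoid morphism, and cut down to the boundary data. The subgroupoid $\mathcal{H}\hookrightarrow \mathcal{G}^{4E}$ is a product of the factors $\mathcal{H}_e$ of \eqref{eq:holgro}, and I would verify that each $\mathcal{H}_e$ is a triple Lie subgroupoid. This is the crux of the bookkeeping: the two gluing directions were checked factor-by-factor in the proof of Theorem \ref{thm:doupoigro} and the categorification direction in the proof of Theorem \ref{thm:doupoigro2}, so here one only has to observe that the corresponding Lie-$2$-subgroup and action-groupoid structures on $H_e\hookrightarrow \mathcal{G}$ assemble coherently in all three directions at once; multiplicativity of the boundary data is precisely what guarantees this. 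Consequently $(\mu^4)^{-1}(\mathcal{H})$ is a triple Lie groupoid.

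I would then handle the reduction. The gauge group $\mathcal{K}$ sits inside $\mathcal{G}^{4V}$, which is itself a triple Lie groupoid, carrying the two pair structures together with the pointwise Lie-$2$-group structure over $G^{4V}$, and all of whose structure maps are group morphisms; $\mathcal{K}$ is a triple Lie subgroupoid and acts on $M^4$ by the gauge action \eqref{eq:gauact}, which is a morphism in all three directions. As in the remark following Theorem \ref{thm:doupoigro}, a free morphic action of this kind descends the ambient structure to the quotient, so $(\mu^4)^{-1}(\mathcal{H})/\mathcal{K}\cong \mathfrak{M}_{\mathcal{G}}(\widetilde{\Sigma},\widetilde{V})_{\widetilde{H},\widetilde{L}}$ inherits all three compatible groupoid structures, with the sides and base of the cube read off exactly as in Theorems \ref{thm:doupoigro} and \ref{thm:doupoigro2} together with Remark \ref{rem:baspoigro}.

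Finally, the Poisson part requires no new input: the canonical Poisson structure produced by Theorem \ref{thm:poigro} is multiplicative for the two gluing structures by Theorem \ref{thm:doupoigro} and for the categorification structure by Theorem \ref{thm:doupoigro2}, hence it is simultaneously multiplicative with respect to all three, which is exactly the condition for a triple Poisson groupoid. The main obstacle I anticipate is organizational rather than conceptual: establishing the full triple coherence of the three structures on $M^4$, $\mathcal{H}$ and $\mathcal{K}$ simultaneously, i.e. ensuring that each of the three groupoid structures is a morphism for the double Lie groupoid formed by the other two. Because all three arise from pair-groupoid and pointwise Lie-$2$-group operations, this ultimately reduces to the two pairwise compatibilities already proved, but the care lies in the combinatorial verification factor-by-factor in \eqref{eq:holgro} and in the identification $\widetilde{\Sigma}=\widehat{\Sigma}_S\cup_{\widehat{T}}\widehat{\Sigma}_S=\widehat{\Sigma}_T\cup_{\widehat{S}}\widehat{\Sigma}_T$.
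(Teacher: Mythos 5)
Your proposal follows essentially the same route as the paper: both realize the moduli space as the reduction of the triple Lie groupoid $M^4$ (two pair-groupoid structures over $M^2$ from the two gluings, plus the pointwise structure $M^4\rightrightarrows N^4$ from the Lie 2-group), transport this along the triple Lie groupoid morphism $\mu^4$, verify factor-by-factor that the $\mathcal{H}_e$ of \eqref{eq:holgro} are triple Lie subgroupoids, and descend through the free morphic action of the triple Lie subgroupoid $\mathcal{K}$, with multiplicativity of the Poisson structure in all three directions supplied by Theorems \ref{thm:doupoigro} and \ref{thm:doupoigro2}. The bookkeeping you flag as the main obstacle is exactly what the paper carries out by exhibiting the explicit cube diagrams for each case of $\mathcal{H}_e$, so your plan is correct and complete in outline.
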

\begin{proof} Theorem \ref{thm:doupoigro} gives us a double Poisson groupoid structure on the upper face of the previous diagram, while Theorem \ref{thm:doupoigro2} gives us a double Poisson groupoid structure on each of the side faces of the diagram. The compatibility of these groupoid structures follows from the fact that they are obtained from reduction of a triple Lie groupoid. Let us follow the notation introduced in the proof of Theorem \ref{thm:doupoigro2}. We have a triple Lie groupoid 
\[ \xymatrix @-1.7pc { M^4 \ar@<-.5ex>[dd]\ar@<.5ex>[dd] \ar@<-.5ex>[rd] \ar@<.5ex>[rd] \ar@<-.5ex>[rr] \ar@<.5ex>[rr] && M^2 \ar@<.5ex>[dd]|\hole \ar@<-.5ex>[dd]|\hole \ar@<.5ex>[rd] \ar@<-.5ex>[rd] \\ 
& M^2 \ar@<.5ex>[dd] \ar@<-.5ex>[dd] \ar@<.5ex>[rr] \ar@<-.5ex>[rr] && M \ar@<.5ex>[dd] \ar@<-.5ex>[dd] \\
M_0^4 \ar@<.5ex>[rr]|!{[r];[r]}\hole \ar@<-.5ex>[rr]|!{[r];[r]}\hole \ar@<.5ex>[rd] \ar@<-.5ex>[rd] && M_0^2 \ar@<.5ex>[rd] \ar@<-.5ex>[rd] \\ & M_0^2 \ar@<-.5ex>[rr]\ar@<.5ex>[rr] && M_0; } \]
where the top and the bottom faces are pair double Lie groupoids and the side faces are products of $M \rightrightarrows M_0$ as in \eqref{eq:modspa2gr}. The moment map $\mu^4:M^4 \rightarrow \mathcal{G}^{4E}$ is a triple Lie groupoid morphism with target the triple Lie groupoid
\[ \xymatrix @-1.8pc { \mathcal{G}^{4E} \ar@<-.5ex>[dd]\ar@<.5ex>[dd] \ar@<-.5ex>[rd] \ar@<.5ex>[rd] \ar@<-.5ex>[rr] \ar@<.5ex>[rr] && \mathcal{G}^{2E} \ar@<.5ex>[dd]|\hole \ar@<-.5ex>[dd]|\hole \ar@<.5ex>[rd] \ar@<-.5ex>[rd] \\ 
& \mathcal{G}^{2E}  \ar@<.5ex>[dd] \ar@<-.5ex>[dd] \ar@<.5ex>[rr] \ar@<-.5ex>[rr] && \mathcal{G}^E \ar@<.5ex>[dd] \ar@<-.5ex>[dd] \\
G^{4E} \ar@<.5ex>[rr]|!{[r];[r]}\hole \ar@<-.5ex>[rr]|!{[r];[r]}\hole \ar@<.5ex>[rd] \ar@<-.5ex>[rd] && G^{2E} \ar@<.5ex>[rd] \ar@<-.5ex>[rd] \\ & G^{2E} \ar@<-.5ex>[rr]\ar@<.5ex>[rr] && G^E; } \]
which is again just the pair groupoid over the double Lie groupoid $\mathcal{G}^{2E}$ with sides $\mathcal{G}^E$ and $G^{2E}$. Each of the groupoids $\mathcal{H}_e $ in \eqref{eq:holgro} is a triple Lie subgroupoid of the previous triple Lie groupoid in this case:
\[ \xymatrix @-1.8pc { H^{4}_e \ar@<-.5ex>[dd]\ar@<.5ex>[dd] \ar@<-.5ex>[rd] \ar@<.5ex>[rd] \ar@<-.5ex>[rr] \ar@<.5ex>[rr] && H^{2}_e \ar@<.5ex>[dd]|\hole \ar@<-.5ex>[dd]|\hole \ar@<.5ex>[rd] \ar@<-.5ex>[rd] \\ 
& H_e^{2}  \ar@<.5ex>[dd] \ar@<-.5ex>[dd] \ar@<.5ex>[rr] \ar@<-.5ex>[rr] && H_e \ar@<.5ex>[dd] \ar@<-.5ex>[dd] \\
(H'_e)^{4} \ar@<.5ex>[rr]|!{[r];[r]}\hole \ar@<-.5ex>[rr]|!{[r];[r]}\hole \ar@<.5ex>[rd] \ar@<-.5ex>[rd] && (H'_e)^{2} \ar@<.5ex>[rd] \ar@<-.5ex>[rd] \\ & (H'_e)^{2} \ar@<-.5ex>[rr]\ar@<.5ex>[rr] && H_e', }\quad \quad 
\xymatrix @-1.7pc { \mathcal{G}_\Delta^2 \ar@<-.5ex>[dd]\ar@<.5ex>[dd] \ar@<-.5ex>[rd] \ar@<.5ex>[rd] \ar@<-.5ex>[rr] \ar@<.5ex>[rr] && \mathcal{G} \ar@<.5ex>[dd]|\hole \ar@<-.5ex>[dd]|\hole \ar@<.5ex>[rd] \ar@<-.5ex>[rd] \\ 
& \mathcal{G}  \ar@<.5ex>[dd] \ar@<-.5ex>[dd] \ar@<.5ex>[rr] \ar@<-.5ex>[rr] && \mathcal{G} \ar@<.5ex>[dd] \ar@<-.5ex>[dd] \\
G^{2}_\Delta \ar@<.5ex>[rr]|!{[r];[r]}\hole \ar@<-.5ex>[rr]|!{[r];[r]}\hole \ar@<.5ex>[rd] \ar@<-.5ex>[rd] && G \ar@<.5ex>[rd] \ar@<-.5ex>[rd] \\ & G \ar@<-.5ex>[rr]\ar@<.5ex>[rr] && G, } \quad\quad
 \xymatrix @-1.9pc { (H_e \times \mathcal{G} )^2 \ar@<-.5ex>[dd]\ar@<.5ex>[dd] \ar@<-.5ex>[rd] \ar@<.5ex>[rd] \ar@<-.5ex>[rr] \ar@<.5ex>[rr] && H_e \times \mathcal{G}  \ar@<.5ex>[dd]|\hole \ar@<-.5ex>[dd]|\hole \ar@<.5ex>[rd] \ar@<-.5ex>[rd] \\ 
& \mathcal{G}^2  \ar@<.5ex>[dd] \ar@<-.5ex>[dd] \ar@<.5ex>[rr] \ar@<-.5ex>[rr] && \mathcal{G} \ar@<.5ex>[dd] \ar@<-.5ex>[dd] \\
(H_e' \times {G} )^2 \ar@<.5ex>[rr]|!{[r];[r]}\hole \ar@<-.5ex>[rr]|!{[r];[r]}\hole \ar@<.5ex>[rd] \ar@<-.5ex>[rd] && H'_e \times G \ar@<.5ex>[rd] \ar@<-.5ex>[rd] \\ & G^{2} \ar@<-.5ex>[rr]\ar@<.5ex>[rr] && G, }  \]
\[ \xymatrix @-1.9pc { ( \mathcal{G} \times H_e)^2 \ar@<-.5ex>[dd]\ar@<.5ex>[dd] \ar@<-.5ex>[rd] \ar@<.5ex>[rd] \ar@<-.5ex>[rr] \ar@<.5ex>[rr] && \mathcal{G}\times  H_e  \ar@<.5ex>[dd]|\hole \ar@<-.5ex>[dd]|\hole \ar@<.5ex>[rd] \ar@<-.5ex>[rd] \\ 
& \mathcal{G}^2  \ar@<.5ex>[dd] \ar@<-.5ex>[dd] \ar@<.5ex>[rr] \ar@<-.5ex>[rr] && \mathcal{G} \ar@<.5ex>[dd] \ar@<-.5ex>[dd] \\
(   G \times H_e')^2 \ar@<.5ex>[rr]|!{[r];[r]}\hole \ar@<-.5ex>[rr]|!{[r];[r]}\hole \ar@<.5ex>[rd] \ar@<-.5ex>[rd] &&    G  \times H'_e\ar@<.5ex>[rd] \ar@<-.5ex>[rd] \\ & G^{2} \ar@<-.5ex>[rr]\ar@<.5ex>[rr] && G; } \]where the top double Lie groupoid structures in the previous diagrams are described in the proof of Theorem \ref{thm:doupoigro2} and the vertical structure is given by the multiplicativity of the boundary data. Analogously, $\mathcal{K} $ constitutes a triple Lie groupoid and its action on $(\mu^4)^{-1}(\mathcal{H} )$ is by a triple Lie groupoid morphism, then the quotient is a triple Lie groupoid as desired. \end{proof} 
\begin{exa} Consider the situation of Example \ref{exa:dousym2} but suppose that the structure group is a Lie 2-group whose Lie 2-algebra is the double of a Manin triple of Lie 2-algebras. Then the corresponding moduli space is a triple symplectic groupoid. \end{exa}  
\appendix 
\section{Reduction of morphisms between Manin pairs and the gluing of surfaces} The quasi-Hamiltonian space structure on $\hom(\Pi_1(\Sigma,V),G)$ for a marked surface $(\Sigma,V)$ as in \S \ref{sec:quisur} can be constructed using the following version of Courant reduction.
\subsection{Reduction of morphisms between Manin pairs}\label{subsec:redcou} Let $(\mathfrak{q},\mathfrak{u} )$ be a Manin pair of Lie algebras with $\langle\,,\, \rangle $ being the metric on $\mathfrak{q} $. Suppose that there is a Lie algebra action of $\mathfrak{q} $ on a manifold $P$ with coisotropic stabilizers. Then $(\mathcal{E},\mathcal{L}):=(\mathfrak{q} \times P,\mathfrak{u} \times P) $ naturally becomes a Manin pair, where $\mathcal{E}=\mathfrak{q} \times P $ is an action Courant algebroid in the sense of \cite{coupoi}. Suppose that $\mathcal{E} $ is an exact Courant algebroid and let $R:(\mathbb{T} M,TM) \rightarrow (\mathcal{E},\mathcal{L})$ be a morphism of Manin pairs over a map $\mu:M \rightarrow P$. Recall that a coisotropic Lie subalgebra of a quadratic Lie algebra $\mathfrak{q} $ as above is a Lie subalgebra $\mathfrak{c} \hookrightarrow \mathfrak{q} $ such that $\mathfrak{c}^\perp \hookrightarrow \mathfrak{c} $, where $\mathfrak{c}^\perp$ is the subspace of $\mathfrak{q} $ which is $\langle\,,\, \rangle $-orthogonal to $\mathfrak{c} $. A coisotropic Lie subalgebra $\mathfrak{c} \hookrightarrow \mathfrak{q} $ and a submanifold $j:N \hookrightarrow P$ form {\em reductive data} $(\mathfrak{c},N) $ \cite[Def. 3.2]{quisur2} for $R$ if the following conditions hold:
\begin{itemize} \item the foliation on $N$ induced by the image of the isotropic subbundle $\mathfrak{c}^\perp \times P \hookrightarrow  \mathcal{E}$ in $TP$ is simple,
\item $\mu$ intersects $N$ cleanly and the $\left(  \mathfrak{u}\cap\mathfrak{c}^\perp\right)$-action on $\mu^{-1}(N)$ determined by $R$ induces a simple foliation as well. 
\end{itemize} 
In such a situation, consider the diagram
\[ \begin{tikzcd}[column sep=4ex,row sep=3ex]
{(\mathbb{T}M,TM)} \arrow[rrr, "R"]                                                                             &  &  & {(\mathcal{E},\mathcal{L})} \arrow[dd, "R_{\mathfrak{c}} \times (Q\circ J^\top)"] \\
{(\mathbb{T}\mu^{-1}(N),T \mu^{-1}(N))} \arrow[u, "R_i"]                                                        &  &  &                                                                                   \\
{(\mathbb{T}M_{\mathfrak{c},N},TM_{\mathfrak{c},N})} \arrow[rrr, "{R_{\mathfrak{c},N}}"'] \arrow[u, "R_q^\top"] &  &  & {(\mathcal{E},\mathcal{L})_{\mathfrak{c},N}}                                     
\end{tikzcd}\]
where $i:\mu^{-1}(N) \hookrightarrow M $ s the inclusion and the Courant morphisms are defined as follows.
\begin{itemize} \item $R_j$ and $R_q$ are the canonical Courant relations over the corresponding maps and $q:M \rightarrow M_{\mathfrak{c},N }$ is the projection to the leaf space $ M_{\mathfrak{c},N }$ of the foliation on $\mu^{-1}(N)$ induced by the $( \mathfrak{u} \cap\mathfrak{c}^\perp)$-action.
\item $R_{\mathfrak{c} }$ is the quotient Courant morphism $\mathfrak{q} \rightarrow \mathfrak{c} /\mathfrak{c}^\perp$ determined by the {\em coisotropic reduction of $\mathfrak{c}$}: 
\[ u\sim_{R_{\mathfrak{c} }} (u+\mathfrak{c}^\perp) \quad  \text{for all $u\in \mathfrak{c}$} \quad \text{(equivalently, $R_{\mathfrak{c} }=\{(u,u+\mathfrak{c}^\perp)\in \mathfrak{d} \times(\mathfrak{c}/\mathfrak{c}^\perp ) |u\in \mathfrak{c} \}$)}. \]
\item $Q$ is the graph of the leaf space projection $N \rightarrow N/\mathfrak{c}^\perp$ of the foliation induced by $\mathfrak{c}^\perp$, $J$ is the graph of the inclusion $j: N \rightarrow P$.
\item $(\mathcal{E},\mathcal{L})_{\mathfrak{c},N}=\left((\mathfrak{c}/\mathfrak{c}^\perp) \times( N/\mathfrak{c}^\perp),\left( (\mathfrak{u}\cap \mathfrak{c} +\mathfrak{c}^\perp)/ \mathfrak{c}^\perp\right) \times (N/\mathfrak{c}^\perp)\right)$ is the Manin pair obtained by coisotropic reduction from $(\mathcal{E},\mathcal{L} )$ as in \cite[\S 2.1]{coupoi}. \end{itemize}     
\begin{thm}[\cite{quisur2}]\label{thm:coured1} The composite $R_{\mathfrak{c},N}:= \left(R_{\mathfrak{c}} \times (Q\circ J^\top) \right)\circ R\circ R_i\circ R_q^\top$ is a morphism of Manin pairs. If $R$ is exact and the anchor map of $\mathcal{E}$ restricted to $\mathfrak{c} \times N$ covers $T N$ surjectively, then $R_{\mathfrak{c},N}$ is exact as well. \end{thm}  
This result is the key technical tool introduced in \cite[Thm. 3.1, Thm. 3.2]{quisur2}. Now we shall state a simple corollary that is useful for reducing multiplicative quasi-Hamiltonian spaces. 
\begin{defi}\label{def:mulred} We say that the reductive data $(\mathfrak{c},N)$ for $R$ as in Theorem \ref{thm:coured1} are {\em multiplicative} if the following conditions hold:
\begin{itemize} \item $(\mathfrak{q},\mathfrak{u})$ is a Manin pair of Lie 2-algebras ($\mathfrak{q}$ is a split quadratic Lie 2-algebra as in \S \ref{subsec:multwidir} and $\mathfrak{u} \hookrightarrow \mathfrak{q} $ is a Lagrangian Lie 2-subalgebra), 
\item $P$ is a Lie groupoid and $(\mathcal{E},\mathcal{L})$ is a multiplicative Manin pair, 
\item $M$ is a multiplicative quasi-Hamiltonian space with Manin pair morphism $R$ as in Definition \ref{def:mulquaham},
\item $\mathfrak{c} \hookrightarrow \mathfrak{q}$ is a Lie 2-subalgebra and $N \hookrightarrow P$ is a Lie subgroupoid,
\item the foliation on $N$ determined by $\mathfrak{c}^\perp \times N$ is multiplicatively simple as in Definition \ref{def:mulfol} and so is the foliation on $M$ induced by $\mathfrak{u} \cap \mathfrak{c}^\perp$. \end{itemize} \end{defi} 
\begin{prop}\label{pro:cared} Suppose that the reductive data in Theorem \ref{thm:coured1} are multiplicative. Then $M_{\mathfrak{c},N}$ is a multiplicative quasi-Hamiltonian space with Manin pair morphism $R_{\mathfrak{c},N}$. \end{prop} \begin{proof} This follows from the fact that 
\[ (\mathcal{E},\mathcal{L})_{\mathfrak{c},N}=\left((\mathfrak{c}/\mathfrak{c}^\perp) \times( N/\mathfrak{c}^\perp),\left( (\mathfrak{u}\cap \mathfrak{c} +\mathfrak{c}^\perp)/ \mathfrak{c}^\perp\right) \times (N/\mathfrak{c}^\perp)\right) \] 
is automatically a multiplicative Manin pair (note that $(\mathfrak{c}/\mathfrak{c}^\perp,(\mathfrak{u}\cap \mathfrak{c} +\mathfrak{c}^\perp)/\mathfrak{c}^\perp) $ is a Manin pair of Lie 2-algebras) and $R_{\mathfrak{c},N}$ is the composite of multiplicative Courant morphisms. \end{proof}
\subsection{Applications to spaces of representations}\label{subsec:qhammodspa} 

\subsubsection{Forgetting vertices in a marked surface}\label{subsec:forver} We shall follow the notation and assumptions of \S \ref{subsec:quapoimodspa} in what follows. Let $(\Sigma,V)$ be a marked surface and consider a finite subset $V'\subset \partial \Sigma$ such that $V\subset V'$. Then the quasi-Poisson bivector $\pi_{\Sigma,V}$ and corresponding $G^V$-action on $M=\hom(\Pi_1(\Sigma,V),G)$ are obtained by reduction from $\pi_{\Sigma,V'}$ on $M':=\hom(\Pi_1(\Sigma,V'),G)$ \cite[Corollary 2]{quisur}. We can interpret this observation using Courant morphims and thus we obtain that 
\[ R_{\mathfrak{c} }\circ \mathbf{R}_{\Sigma,V'}\circ R_q^\top=\mathbf{R}_{\Sigma,V}, \]
where $R_{\mathfrak{c} }: \mathfrak{d}^{V'} \rightarrow \mathfrak{d}^V  $ is the quotient Courant morphism corresponding to the coisotropic reduction of $\mathfrak{c}:=\left(\prod_{v\in V} \mathfrak{d}_v \times \prod_{v\in V'-V} \mathfrak{g}_\Delta\right) \hookrightarrow \mathfrak{d}^{V'} $ and $R_q: \mathbb{T} M'\rightarrow \mathbb{T}M $ is the canonical Courant morphism over the quotient map $q:M'\rightarrow M'/G^{V'-V}=M$. But this implies that $\mathfrak{c}$ and $N=G^{E'}$ are reductive data for $R_{\Sigma,V'}$ in the sense of \S \ref{subsec:redcou} and also that 
\begin{align}  R_{\Sigma,V}=  \left(R_{\Sigma,V'} \right)_{\mathfrak{c},N }. \label{eq:forver} \end{align}  
Recall from the discussion in \S \ref{subsec:quapoimodspa} that we can construct $R_{\Sigma,V}$ using $\mu$ and $\mathbf{R}_{\Sigma,V}$ as in \cite[\S 5.2.1]{quisur2}. This implies, in particular, that we can contruct ${R}_{\Sigma,V}$ (and hence $\mathbf{R}_{\Sigma,V}$ as in \eqref{eq:quapoidir}) starting from the case of a fully marked surface. In fact, we can choose $V'$ in such a way that $(\Sigma,V')$ is fully marked to prove our claim. Note that it also follows from \eqref{eq:bivmodspa} that a marked point $v$ which is decorated with $\mathfrak{g}_\Delta \hookrightarrow \mathfrak{d} $ has no effect on the Poisson tensor of the corresponding decorated moduli space.

\subsubsection{Sewing a surface along a pair of edges}\label{subsec:sew} Let us describe the reduction corresponding to identifying two edges in a marked surface $(\Sigma',V')$ as in \cite[\S 4.1]{quisur2}. Let $\Gamma'=(E',V')$ be the boundary graph of $(\Sigma',V')$ for $i=1,2$ and take two positively parameterized boundary edges $j_i:[0,1] \hookrightarrow \partial \Sigma'$ for $i=1,2$. Consider the surface 
\[ \Sigma:= \Sigma'/j_1(t)\sim j_2(1-t), \quad \forall t\in [0,1]; \]  
let $e$, $e'$ be the boundary edges determined by $j_1([0,1])$ and $j_2([0,1])$ respectively. Let $\Gamma =(E,V)$ be the boundary graph of $(\Sigma,V)$, where $V$ is obtained by including the vertices of $V'$ in $\partial\Sigma$, identifying the endpoints $\mathtt{S}( e)\sim \mathtt{T}(e') $, $\mathtt{T}(e)\sim \mathtt{S}(e')$ and removing one of these points if it lies in the interior of $\Sigma$ which is denoted by $\mathring{\Sigma}$. The Manin pair morphism $R_{\Sigma,V}$ is obtained in the following manner. Let us introduce the notation $M':=\hom(\Pi_1(\Sigma',V'),{G} )$ and let $\mu':M' \rightarrow G^{E'}$ be the corresponding moment map. Then we can consider the reductive data $(\mathfrak{c}_{\text{sew}_{\{e,e'\}} },N)$ for $ R_{\Sigma',V'}$: 
\begin{align} &\mathfrak{c}_{\text{sew}_{\{e,e'\}} }= \left(\mathfrak{c}_1 \times  \mathfrak{c}_2 \times \left( \prod_{v\in V'-\{\mathtt{T}(e),\mathtt{S}(e),\mathtt{T}(e'),\mathtt{S}(e')\}  } \mathfrak{d}_v  \right)\right)\hookrightarrow \prod_{v\in V'} \mathfrak{d}_v   \label{eq:sew1}\\ 
&\mathfrak{c}_1=\begin{cases} \{(V\oplus W, U\oplus V)| U,V,W\in \mathfrak{g}\} \hookrightarrow \mathfrak{d}_{\mathtt{S}(e)}  \times \mathfrak{d}_{\mathtt{T}(e')}\quad   \text{if $\mathtt{S}(e)\sim \mathtt{T}(e') $ does not lie in $\mathring{\Sigma}$} \\
\{(V\oplus W, V\oplus W)| V,W\in \mathfrak{g}\} \hookrightarrow \mathfrak{d}_{\mathtt{S}(e)}  \times \mathfrak{d}_{\mathtt{T}(e')} \quad \text{otherwise, } \end{cases} \notag \\  
&\mathfrak{c}_2=\begin{cases} \{(U\oplus V, V\oplus W)| U,V,W\in \mathfrak{g}\} \hookrightarrow \mathfrak{d}_{\mathtt{T}(e)}  \times \mathfrak{d}_{\mathtt{S}(e') } \quad   \text{if $\mathtt{T}(e)\sim \mathtt{S}(e') $ does not lie in $\mathring{\Sigma}$} \\
\{(V\oplus W, V\oplus W)| V,W\in \mathfrak{g}\} \hookrightarrow \mathfrak{d}_{\mathtt{T}(e)}  \times \mathfrak{d}_{\mathtt{S}(e')} \quad \text{otherwise, } \end{cases} \notag \\
&N=\left(\{(g,g^{-1})|g\in G\} \times\left( \prod_{f\in E'-\{e,e'\}} G_f  \right) \right)
\hookrightarrow  G_e \times G_{e'} \times \left(\prod_{f\in E'-\{e,e'\}} G_f \right)=G^{E'}. \label{eq:sew2} \end{align}  
We have that $R_{\Sigma,V}=\left(R_{\Sigma',V'} \right)_{\mathfrak{c}_{\text{sew}_{\{e,e'\}} },N}$ is obtained then by implementing Theorem \ref{thm:coured1}; this procedure is called {\em sewing} in \cite[\S 4.1]{quisur2}.    
\subsubsection{Construction of the canonical Manin pair morphism $R_{\Sigma,V}$}\label{subsec:mpmor} Let us sketch now how we can use the sewing construction above to produce the morphism of Manin pairs $R_{\Sigma,V}$ for a general fully marked surface $(\Sigma,V)$ with boundary graph $\Gamma =(E,V)$. According to \S \ref{subsec:forver}, this is sufficient to deal with the situation of a marked but not necessarily fully marked surface as well. The starting point is the morphism of Manin pairs $R_{\Delta^2}=R_{\Delta^2,\{v_i\}_{i=1,2,3}}$ corresponding to a triangle $(\Delta^2,\{v_i\}_{i=1,2,3})$ as in Example \ref{exa:modspatri} that we identify here with the 2-dimensional simplex $\Delta^2$. Let us denote $M_{\Delta^2}:=\hom(\Pi_1({\Delta^2},\{v_i\}_{i=1,2,3}),G)$. 

Choose a triangulation $\mathcal{T}=(\mathcal{T}_2,\mathcal{T}_1,\mathcal{T}_0   ) $ of $\Sigma$, where $\mathcal{T}_i$ denotes the set of $i$-dimensional simplices, in such a way that $V=\partial \Sigma \cap \mathcal{T}_0$. Let $V'$ be the set of vertices in $\mathcal{T}_0 $ that lie in the interior of $\Sigma$. One can then perform sewing as above on the product relation $R_{\Delta^2}^{\mathcal{T}_2 }$ using the reductive data \eqref{eq:sew1} and \eqref{eq:sew2} determined by every edge in $\mathcal{T}_1 $ that is the common edge of two triangles in $\mathcal{T}_2 $, see Figure \ref{fig:glu2sur}. Let us recall now what the output of such a process is in terms of differential forms \cite[\S 1.2.1]{quisur2}. The space $\hom(\Pi_1(\Sigma,V),G)$ can be represented as the quotient $M= \mathcal{M} /G^{V'}$, where $\mathcal{M}  \hookrightarrow M_{\Delta^2}^{ \mathcal{T}_2  } $ is given by 
\[ \mathcal{M} =\{(\rho_T)_{T\in \mathcal{T}_2 }|\rho_T(e)=\rho_{T'}(e')^{-1} \text{ if $e=e'$ as elements of $\mathcal{T}_1 $ and $T,T'\in \mathcal{T}_2$ satisfy $T\neq T'$ } \} \]
and the action is the gauge action \eqref{eq:gauact} of $G^{V'}$ on $\mathcal{M} $. Note that the moment map $\mu$ on $M$ is also induced by the underlying moment map of $R_{\Delta^2}^{\mathcal{T}_2 }$ via reduction. Let $\Omega_{\Delta^2}\in \Omega^2 \left(M_{\Delta^2}\right)$ be determined by pulling back the 2-form $\omega $ defined in \eqref{eq:polwie} using any identification 
\[ M_{\Delta^2}\cong G^2 \] 
as in Example \ref{exa:modspatri} ($\Omega_{\Delta^2}$ is independent of the chosen identification). Let $\text{pr}_T:M_{\Delta^2}^{ \mathcal{T}_2  } \rightarrow M_{\Delta^2}$ be the projection to the factor corresponding to $T\in \mathcal{T}_2$ and denote by $\iota: \mathcal{M} \hookrightarrow M_{\Delta^2}^{ \mathcal{T}_2  } $ the inclusion. Then $\iota^*\sum_{T\in \mathcal{T}_2  }\text{pr}_T^* \Omega_{\Delta^2}$ is basic with respect to the quotient map $\mathcal{M} \rightarrow {M} $ \cite[Thm. 1.2]{quisur2}. Let $\Omega  \in \Omega^2({M}) $ be the induced form on the quotient. In terms of a suitable splitting of $\mathcal{E}_{\Gamma }  $, we can then represent
\begin{align}  R_{\Sigma,V}\cong  \{(u\oplus \mu^*\alpha -i_u \Omega   ,T\mu(u)\oplus \alpha )\in \mathbb{T}M \times \mathbb{T}_\Theta G^E|u\oplus \alpha \in TM \oplus \mu^*T^*G^E  \}; \label{eq:mpmodspa} \end{align} 
where $\Theta$ is the pullback of the Cartan 3-form on each factor $G$. Let us point out that one can also apply this construction to a closed surface $\Sigma$ thus recovering the Atiyah-Bott symplectic structure as in \cite[\S 5]{weisymmod}.
\section{Double Lie groupoids and Dirac structures of symmetric decorations}\label{app:dougro}
\begin{prop}\label{pro:intla} Let us assume that $(\widehat{\Sigma},\widehat{V})$ is as in Theorem \ref{thm:poigro}. Then the following statements hold.
\begin{enumerate} \item the LA-groupoid $\widehat{\mathcal{A} }$ corresponding to a symmetric decoration of $(\widehat{\Sigma},\widehat{V})$ as in Remark \ref{rem:muldir} is integrable by a double Lie groupoid $\mathcal{G}$ which has source-connected fibers over the Lie groupoid $G^{\widehat{E} } \rightrightarrows G^{E'}$ of \eqref{eq:gpdtar1}. \item the $\widehat{\mathcal{A}}$-orbits in $G^{\widehat{E} } \rightrightarrows G^{E'}$ that pass through $G^{E'}$ are subgroupoids of $G^{\widehat{E} } \rightrightarrows G^{E'}$. \end{enumerate}   \end{prop}
\begin{proof} Let us use the notation introduced in \S \ref{subsec:symdec}. Take $v\in V_0$ and let ${A}_v \subset G \times G$ be the connected subgroup which integrates ${\mathfrak{a}}_v\subset \mathfrak{d}_v $ for all $v\in V_0$. Consider the pair groupoid
\[ \mathcal{G}_v:= \left( {A}_v  \times {A}^\vee_v \rightrightarrows {A}_v \right) , \quad A_v^\vee:=\{(x,y)\in G \times G|(y,x)\in A_v \}, \, \forall v\in V_0; \] 
where, to ensure compatibility with the CA-groupoid structure on $\mathcal{E}_{\widehat{\Gamma } } $ as in Lemma \ref{lem:mulca}, the multiplication on $\mathcal{G}_v$ has to be given by 
\[ \mathtt{m}( ((x,y),(w,z)),((z,w),(x',y')))=((x,y),(x',y')), \quad  \forall (x,y),(x',y'),(z,w)\in A_v. \] 
Consider the direct product $\mathcal{G} =\prod_{v\in V_0} \mathcal{G}_v$ which is a groupoid over $\mathcal{G}_0= \prod_{v\in V_0}A_v $. We can view $\mathcal{G} $ as a subgroup of $ G^{2i_1(V_0)} \times G^{2i_2(V_0)}=G^{2\widehat{V} }$ by including the subgroups $A_v \hookrightarrow G^2_{i_1(v)}$, $A_v^\vee \hookrightarrow G^2_{i_2(v)}$ inside the copy of $G^2_{i_a(v)} \hookrightarrow G^{2\widehat{V} } $ that corresponds to $i_a(v)\in\widehat{V}=i_1(V_0)\cup i_2(V_0)$ for each $v\in V_0$ and $a=1,2$. Then the natural integration of the infinitesimal action \eqref{eq:infact} restricted to $\widehat{\mathcal{A} }$ gives us an action Lie groupoid. Namely, the action of $\mathcal{G} $ on $G^{\widehat{E} } $ is 
\[ (x_v,y_v)_{v\in \widehat{V} }\cdot (g_e)_{e\in \widehat{E} }= \left(y_{\mathtt{T}(e) }g_ex_{\mathtt{S}(e) }^{-1}\right)_{e\in \widehat{E} }, \quad \forall (x_v,y_v)_{v\in \widehat{V} }\in \mathcal{G} \hookrightarrow G^{2\widehat{V} },\quad \forall  (g_e)_{e\in \widehat{E} } \in G^{\widehat{E} }. \]
It is straightforward to verify that the vertical action groupoid described above together with the horizontal product groupoid structure determine a double Lie groupoid:
\[ \xymatrix@-0.6pc{ \mathcal{G} \times G^{\widehat{E} } \ar@<-.5ex>[r] \ar@<.5ex>[r]\ar@<-.5ex>[d] \ar@<.5ex>[d]& \mathcal{G}_0 \times G^{E'} \ar@<-.5ex>[d] \ar@<.5ex>[d] \\ G^{\widehat{E} } \ar@<-.5ex>[r] \ar@<.5ex>[r] & G^{{E'} }. }\]

 By construction, the source-fibers of the double Lie groupoid described above over $G^{\widehat{E} } $ are connected. Then a $\mathcal{G} $-orbit in $G^{\widehat{E} } $ is also an $\widehat{\mathcal{A}}$-orbit and viceversa. Therefore, item (2) follows from \cite[Prop. 7]{folpoigro}.  \end{proof}
\subsection*{Acknowledgements} The author is very grateful to H. Bursztyn for inspiring conversations that motivated this work. The author thanks P. Boalch, M. Gualtieri, E. Meinrenken and J. Pulmann for their valuable comments and observations and the author also thanks the referee whose attention to detail and opportune suggestions have led to substantial improvements.

\printbibliography
\end{document}